\newcommand{\pdfgraphics}{\ifpdf\DeclareGraphicsExtensions{.pdf,.jpg}\else\fi}
\definecolor{citegreen}{rgb}{0,0.6,0}
\definecolor{refred}{rgb}{0.8,0,0}
\numberwithin{equation}{section}
\theoremstyle{plain}
\newtheorem{definition}{Definition}[section]
\newtheorem{remark}[definition]{Remark}
\newtheorem{ass}[definition]{Assumptions}
\newtheorem{proposition}[definition]{Proposition}
\newtheorem{theorem}[definition]{Theorem}
\newtheorem{lemma}[definition]{Lemma}
\let\olddefinition\remark
\RenewDocumentCommand{\remark}{o}{%
  \IfNoValueTF{#1}
    {\olddefinition}
    {\olddefinition[#1]}%
  \normalfont
}
\numberwithin{equation}{section}
\newcommand{\defl}{\mathrel{=\!\!\mathop:}}
\newcommand{\defr}{\mathrel{\mathop:\!\!=}}
\newcommand{\Div}{\mathrm{div}}
\newcommand{\Id}{\mathrm{Id}}
\def\E{\mathbb{E}}
\def\N{\mathbb{N}}
\def\R{\mathbb{R}}
\renewcommand{\setminus}{\mathbin{\backslash}}
\definecolor{mygreen}{rgb}{0,0.7,0}
\begin{document}
\pdfgraphics 

\title{Qualitative Properties for a System Coupling \\Scaled Mean Curvature Flow and Diffusion}

\author{Helmut Abels
\footnote{Fakult\"at f\"ur Mathematik, Universit\"at Regensburg, Universit\"atsstrasse 31, 
93053 Regensburg, Germany \newline
\hspace*{1.6em} helmut.abels@ur.de, felicitas.buerger@ur.de, harald.garcke@ur.de (corresponding author)}
 \and Felicitas Bürger
\footnotemark[1]
\and Harald Garcke
\footnotemark[1]
}

\maketitle

\begin{abstract}
We consider a system consisting of a geometric evolution equation for a hypersurface and a parabolic equation on this evolving hypersurface. More precisely, we discuss mean curvature flow scaled with a term that depends on a quantity defined on the surface coupled to a diffusion equation for that quantity. Several properties of solutions are analyzed. Emphasis is placed on to what extent the surface in our setting qualitatively evolves similar as for the usual mean curvature flow. To this end, we show that the surface area is strictly decreasing but give an example of a surface that exists for infinite times nevertheless. Moreover, mean convexity is conserved whereas convexity is not. Finally, we construct an embedded hypersurface that develops a self-intersection in the course of time. Additionally, a formal explanation of how our equations can be interpreted as a gradient flow is included.
\end{abstract}

\textbf{Mathematics Subject Classification (2010)}: 53E10 (primary); 35B40, 35B51, 35K55, 35K93, 58J35 (secondary). \\
\textbf{Keywords}: mean curvature flow, diffusion equation on surfaces, geometric evolution equation, convexity, maximum principle.


\section{Introduction}

We consider a system consisting of an evolving closed hypersurface $\Gamma$ and a concentration $c: \Gamma \rightarrow \R$ defined on this evolving hypersurface that satisfy the equations
\begin{subequations}\label{eq_Intro_GLS}
\begin{align}
V &= \big(G(c) - G'(c)c\big) H, \label{eq_Intro_GLS1} \\
\partial^\square c &= \Delta_\Gamma \big( G'(c) \big) + cHV. \label{eq_Intro_GLS2} 
\end{align}
\end{subequations}
Here, $H$ and $V$ denote the mean curvature and normal velocity of the hypersurface, respectively. Furthermore, 
the differential operators $\partial^\square$ and $\Delta_\Gamma$ are the normal time derivative and the Laplace-Beltrami operator (see Section \ref{GeometricSetting} for precise definitions). The function $G: \R \rightarrow \R$ is a (Gibbs) energy density and we will often use the notation
\begin{align*}
g(c) \defr G(c)-G'(c)c
\end{align*}
which appears in the right hand side of \eqref{eq_Intro_GLS1}. A solution to the system \eqref{eq_Intro_GLS} consists of both the hypersurface $\Gamma$ and the concentration $c$. In particular, we do not prescribe the evolution of the geometry but it is part of the problem. \\
These equations \eqref{eq_Intro_GLS} can be seen as a suitable characterization of the following physical situation: We are interested in how a pair $(\Gamma,c)$ evolves to decrease the (Gibbs) energy 
\begin{align}\label{eq_Intro_Energy}
\mathrm{E}\big(\Gamma(t),c(t)\big) \defr \int_{\Gamma(t)} G\big(c(t)\big) \, \mathrm{d}\mathcal{A} 
\end{align}
most efficiently while conserving the total mass 
\begin{align*}
\int_{\Gamma(t)} c(t) \, \mathrm{d}\mathcal{A} = m
\end{align*}
of the quantity whose distribution is given by $c$. Here, $\mathrm{d}\mathcal{A}$ denotes the volume element of the surface. From this formulation \eqref{eq_Intro_Energy} it is clear why the function $G$ is called a (Gibbs) energy density. It turns out that solutions of the system of equations \eqref{eq_Intro_GLS} conserve the total mass (see Theorem \ref{Massenerhaltung}) and decrease the energy \eqref{eq_Intro_Energy} (see Theorem \ref{energy_decrease}). Even more, we show that formally, our system of equations \eqref{eq_Intro_GLS} is a gradient flow of the energy functional \eqref{eq_Intro_Energy} (see Section \ref{Sec_GradientFlow}). \\
In the following, we give an introduction to both of the equations \eqref{eq_Intro_GLS1} and \eqref{eq_Intro_GLS2}. The first one dictates the evolution of the geometry. Due to its structure similar to the mean curvature flow, it is referred to as scaled mean curvature flow equation. The second one is a diffusion equation for the concentration $c$ on the evolving surface $\Gamma$. Hence, in this work, we discuss the coupling of a mean curvature flow-type equation and a diffusion equation, similar as in \cite{PozziStinner1}, \cite{BarrettDeckelnickStyles}, \cite{DeckelnickStyles_FEEA} and \cite{KovacsLiLubich_AConvergentAlgorithmForForcedMCF}. All these contributions are exclusively concerned with numerical analysis. Contrarily, our work contains only analytic results and does not address any numerical approximation. In \cite{AbelsBuergerGarcke}, we proved a short time existence result for \eqref{eq_Intro_GLS}. Now, we investigate several properties of the solution to our system of equations, placing emphasis on whether or not and to what extent the hypersurface in our setting fulfills similar qualitative properties as for the usual mean curvature flow. This depends heavily on the shape of the function $G$: On account of mass conservation, reducing the energy means finding a balance between the two opposing objectives of decreasing the surface area of $\Gamma$ and decreasing the values of $G(c)$. Therefore, we also include a discussion of the physical and mathematical aspects of the shape of the energy density function $G$. Especially, the consequences of the parabolicity conditions are analyzed. These conditions are used in \cite{AbelsBuergerGarcke} for the proof of short time existence and for this reason, we will investigate the properties of solutions mainly under these conditions. \\
The promised properties of a solution to our system of equations are the content of Section \ref{ChapEigLsg}. As a start, we draw a connection between our system of equations and the energy functional: A solution of \eqref{eq_Intro_GLS} never increases the energy and we explain formally how our equations can be interpreted as a gradient flow of \eqref{eq_Intro_Energy}. Next, we show that the parabolically scaled mean curvature flow decreases the surface area just as it is known for the usual mean curvature flow. Still it turns out that, in distinction to the usual flow and even under the parabolicity conditions, an initially sphere-shaped hypersurface does not necessarily collapse in finite time.
Then, the conservation of convexity and mean convexity of the evolving hypersurface is addressed: Whereas the latter is conserved by our scaled mean curvature flow, the former is not - in contrast to the usual mean curvature flow. Afterwards, we discuss the formation of self-intersections which is impossible for the usual mean curvature flow but can occur for our scaled mean curvature flow. Finally, several properties of the concentration function are analyzed. In particular, we prove that a solution of \eqref{eq_Intro_GLS} fulfills mass conservation. \\
Several of these properties are illustrated in the work of Elliott, Garcke and Kovács \cite{ElliottGarckeKovacs} that is concerned with numerical analysis for a generalization of our system of equations. \\

The work at hand is based on the dissertation \cite{Buerger} by the second author.

\subsection{The Scaled Mean Curvature Flow Equation}\label{ChapSMCFeq}

This section is devoted to the scaled mean curvature flow equation \eqref{eq_Intro_GLS1}. If $G \equiv \alpha$ is a constant function then we also have $g(c) = G(c) - G'(c)c = \alpha$. Thus, for $\alpha = 1$, \eqref{eq_Intro_GLS1} is just the usual mean curvature flow equation 
\begin{align*}
V = H
\end{align*}
and for $\alpha \neq 1$ we obtain a constantly scaled equation that behaves equally. \\
We gather some properties of the mean curvature flow. As a wonderful survey article on this topic, we recommend \cite{White}. First of all, surfaces that evolve under mean curvature flow turn smooth instantly. This is due to the parabolicity of the partial differential equation for the motion. But because the equation is non-linear, the general theory for parabolic equations only yields smoothing for a short time and does not forbid later singularities (see \cite{Huisken} for a further discussion). The second property is the decrease of the surface area. In fact, the mean curvature flow turns out to be a gradient flow for the area functional which is explicated e.g. in \cite{GarckeDMV}. With the help of a parabolic maximum principle, one can show that embedded, closed surfaces remain embedded, i.e., do not develop self-intersections, and disjoint, closed surfaces remain disjoint (see \cite[Sections 2.1 and 2.2]{Mantegazza} or \cite[Proposition 2.4]{Ecker}). This implies particularly that closed surfaces have finite lifespans: As explained e.g. in \cite{EckerDMV}, the statement reduces to the simple case of spheres, because any closed surface can be surrounded by a sphere and letting them both evolve under mean curvature flow does not produce any collisions. A further consequence of the parabolic maximum principle is that mean convexity is conserved for closed surfaces (see \cite[Proposition 2.4.1]{Mantegazza}). A surface is called \textit{mean convex} if its mean curvature is non-negative.
Besides these relatively basic properties of the mean curvature flow, one of its most remarkable features is that convex, closed surfaces shrink to round points. This catchy formulation means firstly that any convex, closed surface stays convex and secondly that it becomes asymptotically spherical, i.e., the rescaled surface converges to a sphere. The result was proven in \cite{GageHamilton} for the curve case $d=1$ and in \cite{Huisken} for higher dimensions $d \geq 2$. \\ 
In this work however, we deal with more general, non-constant functions $G$ and therefore end up with a non-constantly scaled version 
\begin{align*}
V = g(c) H
\end{align*}
of the mean curvature flow. Our aim is to analyze which of the properties mentioned above transfer from the usual mean curvature flow to the scaled version. This depends of course heavily on the shape of the function $G$. 

\subsection{The Energy Density Function $G$}\label{ChapG}

Due to its role in the energy functional \eqref{eq_Intro_Energy}, the function $G: \R \rightarrow \R$ is called an energy density function. This section is restricted to the physical relevant case of $c \geq 0$, so it suffices to consider $G: \R_{\geq 0} \rightarrow \R$. Due to entropic effects, it seems natural to assign a high energy to the extreme cases of $c \approx 0$ and $c \approx \infty$ so that their occurrence is avoided. \\
Thus, seeking for a minimal energy, the concentration tends to assume values in the moderate area of $G$. On the other hand, the hypersurface $\Gamma$ tends to minimize its surface area and, on account of mass conservation, thus will force the concentration to grow. The system hence needs to find a balance between these two trends. It is not clear which of them will prevail; whether the hypersurface will shrink to a single point as for the usual mean curvature flow or if the role of the additional concentration is significant enough to stop or at least slow down the shrinking of the surface. Obviously, the sought balance depends on the shape of $G$: A constant energy density function $G$ implies independence of the concentration and results in the usual mean curvature flow. Moderate growth of $G(c)$ for $c \rightarrow \infty$ leads to a weak effect of the concentration and therefore to a behavior still similar to the usual mean curvature flow whereas strong growth of $G(c)$ for $c \rightarrow \infty$ increases the impact of the additional concentration and thus allows for a different geometric behavior. \\
To show the existence of short-time solutions to \eqref{eq_Intro_GLS} in \cite{AbelsBuergerGarcke}, we need a more specific shape of $G$: The equations \eqref{eq_Intro_GLS1} and \eqref{eq_Intro_GLS2} have to be parabolic, so $g>0$ and $G''>0$ have to hold. The second condition $G''>0$ implies that $G$ is convex. This fits nicely to the physical idea of high energy for the extreme cases of $c \approx 0$ and $c \approx \infty$, but it prohibits interesting effects like phase transitions. Hence, we have to restrict to considering only one phase or to describing physical situations that generally allow for convex (Gibbs) energy densities. This is the case, e.g., if we neglect effects of internal energy and simply consider a (convex) mixing entropy density. \\ 
The first condition $g>0$ is more involved and will be discussed now. As $G''>0$, we have 
\begin{align*}
g'(c) = \big( G(c)-G'(c)c \big)' = G'(c) - G''(c)c - G'(c) = -G''(c)c < 0
\end{align*}
for $c \geq 0$. Together, $g>0$ and $g'<0$ imply that $g'(c)$ is bounded for $c \rightarrow \infty$. So, 
\begin{align*}
G''(c) = -\frac{g'(c)}{c}
\end{align*}
tends to zero for $c \rightarrow \infty$ and therefore $G(c)$ can grow at most linearly for $c \rightarrow \infty$. It is even possible to specify the slope of this linear growth: As $G$ is convex, we have
\begin{align*}
G(c) - G'(c_0)c \geq G(c_0) - G'(c_0)c_0 = g(c_0) > 0 
\phantom{x} \Leftrightarrow \phantom{x} 
\frac{G(c)}{c} \geq G'(c_0)
\end{align*}
for all $c,c_0 > 0$ and hence
\begin{align*}
\lim_{c_0 \rightarrow \infty} G'(c_0) \leq \inf_{c>0} \frac{G(c)}{c}.
\end{align*}
The estimate $G'(c_0) \geq G'(1)$ holds for any $c_0 \geq 1$ due to $G''>0$ and so $\lim_{c_0 \rightarrow \infty} G'(c_0) \geq G'(1)$ follows. Altogether, we thus have
\begin{align*}
\lim_{c_0 \rightarrow \infty} |G'(c_0)| \leq \max \left\{ \inf_{c>0} \frac{G(c)}{c}, |G'(1)| \right\}.
\end{align*}
Linear growth definitely is only moderate growth. On that account, we expect the geometric evolution in the parabolically scaled case to vary less from the one of the usual mean curvature flow than it would for arbitrary scaling. Nevertheless, we will investigate the properties of solutions mainly under these parabolicity conditions, as they are required in \cite{AbelsBuergerGarcke} for the proof of short time existence.

\subsection{The Diffusion Equation}\label{ChapDeq}

Now, we discuss the second equation \eqref{eq_Intro_GLS2}. 
If there is no geometric evolution, i.e., if $\Gamma$ is a non-moving hypersurface, the energy functional \eqref{eq_Intro_Energy} is fully determined through $G(c)$ and therefore $G(c)$ can be understood as the free energy of the system. Then, $\mu = G'(c)$ is the chemical potential. Fick's first law $j=-\nabla \mu$ for the flux $j$ and a constant diffusion coefficient, set to one for simplicity, together with the continuity equation $\partial_t c = - \Div j$ finally results in the diffusion equation $\partial_t c = \Delta \mu$. And, indeed, without geometric evolution, i.e. $V=0$, the second equation \eqref{eq_Intro_GLS2} reduces to the simple diffusion equation valid for a concentration independent diffusion coefficient
\begin{align*}
\partial_t c = \Delta_\Gamma \big( G'(c) \big)
\end{align*}
as $\partial^\square c = \partial_t c$ holds for a non-moving hypersurface. \\
In the general case of a non-constant evolving hypersurface, the second equation \eqref{eq_Intro_GLS2} is given by
\begin{align*}
\partial^\square c &= \Delta_\Gamma \big( G'(c) \big) + cHV.
\end{align*}
Again, we have a diffusion equation for the concentration $c$ on the surface $\Gamma$ and the changing of the geometry results in an additional source term that depends linearly on the concentration $c$. \\
We want the concentration $c$ to describe the distribution of a quantity that can neither vanish from nor be added to the surface. The second equation guarantees this conservation of mass (see Theorem \ref{Massenerhaltung}).

\section{Preliminaries}

In the following, let $d,n \in \N_{>0}$ and $k,l \in \N_{\geq 0}$. 

\subsection{Geometric Setting}\label{GeometricSetting}

\begin{definition}[Embedded Hypersurface]\label{hypersurface_Def}$\phantom{x}$ \\
A subset $M \subset \R^{d+1}$ is called a $C^{1+k}$-embedded (closed) hypersurface if 
\begin{enumerate}
\item[(i)] $M$ is a $d$-dimensional $C^{1+k}$-embedded submanifold,
\item[(ii)] there exists a continuous unit normal $\nu_M$, i.e., a continuous vector field $\nu_M: M \rightarrow \R^{d+1}$ with $|\nu_M(p)| = 1$ and $\nu_M(p) \perp T_pM$ for all $p \in M$ and
\item[(iii)] $M$ is connected (and compact) as subset of $\R^{d+1}$.
\end{enumerate}
\end{definition}

In this work, a hypersurface never contains a boundary. A unit normal automatically fulfills $\nu_M \in C^k(M,\R^{d+1})$. 

\begin{definition}[Immersed Hypersurface]$\phantom{x}$ \\
Let $M \subset \R^{d+1}$ be a $C^{1+k}$-embedded (closed) hypersurface and let $\theta: M \rightarrow \R^{d+1}$ be a $C^{1+k}$-immersion, i.e., $\theta \in C^{1+k}(M,\R^{d+1})$ such that its differential $\mathrm{d}_p\theta: T_pM \rightarrow \R^{d+1}$ is injective for all $p \in M$. Then, $\Sigma \defr \theta(M) \subset \R^{d+1}$ is called a $C^{1+k}$-immersed (closed) hypersurface with reference surface $M$ and global parameterization $\theta$. 
\end{definition}

Just as for the embedded case, an immersed hypersurface never contains a boundary. Moreover, we remark that we do not use any topological structure on the immersed hypersurface $\Sigma$ itself but only consider the topology on the (embedded) reference surface $M$. \\
As locally any immersion is an embedding, for every point $p \in M$ there exists an open neighborhood $U \subset M$ such that $\theta(U) \subset \Sigma$ is an \textit{embedded patch}, i.e., an embedded hypersurface, and the restriction $\theta_{|U}$ is an embedding. 
Every locally defined term for embedded hypersurfaces thus can easily be defined also for immersed hypersurfaces, simply defining it on the embedded patches. To avoid confusion in points of self-intersection, we always use the pullback onto the reference surface $M$. 
For example, we define the \textit{surface gradient} of a function $f \in C^1(M,\R)$ by
\begin{align*}
\nabla_\Sigma f_{\, | U} \defr \Big( \nabla_{\theta(U)} \big(f \circ (\theta_{|U})^{-1} \big) \Big) \circ \theta_{|U}.
\end{align*}
Analogously, we define the \textit{Laplace-Beltrami operator} $\Delta_\Sigma$ and in addition, we introduce the \textit{surface Hessian} $D_\Sigma^2$ with 
\begin{align*}
[ D_\Sigma^2 f]_{ij} \defr \big[\nabla_\Sigma \big( [\nabla_\Sigma f]_i \big) \big]_j \text{ for } i,j=1,...,d+1
\end{align*}
for any function $f \in C^2(M,\R)$.

As for every embedded patch $\theta(U)$ the differential $\mathrm{d}_p\theta: T_pM=T_pU \rightarrow T_{\theta(p)}\theta(U)$ is a linear isomorphism, the tangent space of $\Sigma$ at $\theta(p)$ for $p \in M$ is given by $T_p\Sigma \defr \mathrm{d}_p\theta(T_pM)$. Furthermore one can show that orientability transfers from the embedded hypersurface $M$ to the immersed hypersurface $\Sigma = \theta(M)$, meaning that there exists a unit normal $\nu \in C^k(M,\R^{d+1})$ with $|\nu(p)|=1$ and $\nu(p) \perp T_p\Sigma$ for all $p \in M$ (see \cite[Proposition 2.27]{Buerger}). 

\begin{definition}[Mean Curvature]\label{mc_def} $\phantom{x}$ \\
Let $\Sigma=\theta(M)$ be a $C^2$-immersed hypersurface with unit normal $\nu$. Its mean curvature is defined as
\begin{align*}
H \defr -\Div_\Sigma \nu \text{ on } M.
\end{align*}
\end{definition}

Note, that we assign a negative mean curvature to convex surfaces and thus always use the outer unit normal in this work!

\begin{definition}[Evolving Hypersurface]\label{evolvHF_Def} $\phantom{x}$ \\
Let $M \subset \R^{d+1}$ be a $C^{2+k}$-embedded (closed) hypersurface and let $T \in (0,\infty)$. Furthermore, let $\theta: [0,T] \times M \rightarrow \R^{d+1}$ with 
\begin{align*}
\theta \in C^{1+l}\big([0,T],C^{k}(M,\R^{d+1})\big) \cap C^{l}\big([0,T],C^{2+k}(M,\R^{d+1})\big)
\end{align*}
such that $\theta_t \defr \theta(t,\cdot): M \rightarrow \R^{d+1}$ is an embedding / immersion for all $t \in [0,T]$. With $\Gamma(t) \defr \Gamma_t \defr \theta_t(M)$, we call 
\begin{align*}
\Gamma \defr \big\{ \{t\} \times \Gamma(t) \, \big| \, t \in [0,T] \big\}
\end{align*}
a $C^{1+l}$-$\phantom{.}C^{2+k}$-evolving embedded / immersed (closed) hypersurface with reference surface $M$ and global parameterization $\theta$.
\end{definition}

An evolving hypersurface $\Gamma$ is well-defined in the sense that for any time $t$, $\Gamma_t$ is an embedded  or immersed hypersurface again, respectively. In the immersed case, $\Gamma_t$ thus locally is an embedded hypersurface. This locality can even be chosen independently of the time $t$: 
For $U \subset M$ sufficiently small, $\Gamma_{|U} \defr \big\{ \{t\} \times \theta_t(U) \, \big| \, t \in [0,T] \big\}$ is an evolving embedded hypersurface, called an \textit{(embedded) patch of $\Gamma$} (see \cite[Proposition 2.50]{Buerger}). Also, one can show that there exists a so-called normal $\nu \in C^{l}\big([0,T],C^{1+k}(M,\R^{d+1})\big)$ such that $\nu(t,\cdot)$ is a unit normal to $\Gamma(t)$ for all $t \in [0,T]$ (see \cite[Proposition 2.51]{Buerger}).

\begin{definition}[Total and Normal Velocity]\label{velocity_Def} $\phantom{x}$ \\
Let $\Gamma$ be a $C^1$-$\phantom{.}C^2$-evolving immersed hypersurface with global parameterization $\theta$ and normal $\nu$. We define its total velocity
\begin{align*}
V^{\text{tot}} \defr \partial_t \theta
\end{align*}
and its normal velocity 
\begin{align*}
V \defr V^{\text{tot}} \cdot \nu.
\end{align*}
\end{definition}

The normal velocity $V$ is a real number and independent of the global parameterization (see \cite[Remark 24(ii)]{Gpde}).

\begin{definition}[Normal Time Derivative]\label{timederivatives_Def} $\phantom{x}$ \\
Let $\Gamma$ be a $C^1$-$\phantom{.}C^2$-evolving immersed hypersurface with reference surface $M$ and total velocity $V^{\text{tot}}$. Then, for a mapping $f \in C^1([0,T] \times M)$, we define the normal time derivative
\begin{align*}
\partial^\square f \defr \partial_t f - V^{\text{tot}} \cdot \nabla_\Gamma f.
\end{align*}
\end{definition}

The normal time derivative is independent of the global parameterization (see \cite[Remark 29(iii) and (iv)]{Gpde}).

\subsection{Short time existence}

In \cite{AbelsBuergerGarcke} we proved short time existence for a parameterized version of our system \eqref{eq_Intro_GLS}. The parameterization relies on the fact that due to a single codimension, we can characterize the evolution of any surface simply by its movement in normal direction. Thus, fixing a reference surface $M$ and a so-called height-function $\rho: [0,T] \times M \rightarrow \R$ completely determines an evolving surface, as the following lemma states (see e.g. \cite[Corollary 2.63]{Buerger} for a proof).

\begin{lemma}
Let $\Sigma = \bar{\theta}(M) \subset \R^{d+1}$ be a $C^{3+k}$-embedded / immersed closed hypersurface with unit normal $\nu_\Sigma$ and let $T \in (0,\infty)$. Furthermore, let 
$\rho \in C^{1+l}\big([0,T],C^{k}(M)\big) \cap C^{l}\big([0,T],C^{2+k}(M)\big)$ with $\|\rho\|_{C^0([0,T] \times M)}$ sufficiently small. We define 
\begin{align*}
\theta_\rho: [0,T] \times M \rightarrow \R^{d+1}, \phantom{x} \theta_\rho(t,p) \defr \bar{\theta}(p) + \rho(t,p)\nu_\Sigma(p).
\end{align*}
Then, with $\Gamma_\rho(t) \defr \theta_\rho(t,M)$,
\begin{align*}
\Gamma_\rho \defr \big\{ \{t\} \times \Gamma_\rho(t) \, \big| \, t \in [0,T] \big\}
\end{align*}
defines a $C^{1+l}$-$\phantom{.}C^{2+k}$-evolving embedded / immersed hypersurface with reference surface $M$ and global parameterization $\theta_\rho$. 
\end{lemma}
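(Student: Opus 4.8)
The statement to prove is that $\theta_\rho$ is a valid global parameterization of a $C^{1+l}$-$C^{2+k}$-evolving hypersurface in the sense of Definition~\ref{evolvHF_Def}, so the plan is to verify, in turn, the required space-time regularity of $\theta_\rho$ and the fact that $\theta_{\rho,t}\defr\theta_\rho(t,\cdot)$ is an embedding — respectively an immersion — for each fixed $t\in[0,T]$. The regularity is mostly bookkeeping: since $\Sigma$ is $C^{3+k}$ its unit normal lies in $C^{2+k}(M,\R^{d+1})$; the time-independent map $\bar\theta\in C^{3+k}(M,\R^{d+1})$ trivially belongs to $C^{1+l}([0,T],C^{k}(M,\R^{d+1}))\cap C^{l}([0,T],C^{2+k}(M,\R^{d+1}))$; and for the product $\rho\,\nu_\Sigma$ I would use that multiplication by the fixed factor $\nu_\Sigma$ is a bounded linear operator $C^{2+k}(M)\to C^{2+k}(M,\R^{d+1})$ and $C^{k}(M)\to C^{k}(M,\R^{d+1})$, so that composing with $\rho$ places $\rho\,\nu_\Sigma$, hence $\theta_\rho$, in the required intersection. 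That $\Gamma_\rho(t)=\theta_{\rho,t}(M)$ is for each $t$ a genuine $C^{2+k}$-embedded resp.\ immersed closed hypersurface then follows once $\theta_{\rho,t}$ is known to be an embedding resp.\ immersion, using that $M$ is compact, connected and orientable.

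For the immersion property I would compute, for $p\in M$ and $X\in T_pM$ (writing $\rho_t\defr\rho(t,\cdot)$),
\[
\mathrm{d}_p\theta_{\rho,t}(X)=\mathrm{d}_p\bar\theta(X)+\rho_t(p)\,\mathrm{d}_p\nu_\Sigma(X)+\big(\mathrm{d}_p\rho_t(X)\big)\,\nu_\Sigma(p).
\]
The point is that the first two summands are tangent to $\Sigma$ at $\bar\theta(p)$ — for the second because $|\nu_\Sigma|\equiv1$ forces $\mathrm{d}_p\nu_\Sigma(X)\perp\nu_\Sigma(p)$, hence $\mathrm{d}_p\nu_\Sigma(X)\in T_p\Sigma$ — while the third is purely normal. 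Writing the tangential part as $(\Id+\rho_t(p)\,L_p)\,\mathrm{d}_p\bar\theta(X)$ with $L_p\defr\mathrm{d}_p\nu_\Sigma\circ(\mathrm{d}_p\bar\theta)^{-1}\in\mathrm{End}(T_p\Sigma)$ the Weingarten endomorphism, one has $C_0\defr\sup_{p\in M}\|L_p\|<\infty$, because $\nu_\Sigma\in C^1$ and, on the compact manifold $M$, the differential $\mathrm{d}_p\bar\theta$ has inverse of uniformly bounded norm (its smallest singular value is positive and continuous in $p$, hence bounded below). Thus, as soon as $\|\rho\|_{C^0([0,T]\times M)}<C_0^{-1}$, the operator $\Id+\rho_t(p)L_p$ is invertible, so $\mathrm{d}_p\theta_{\rho,t}(X)=0$ forces first $\mathrm{d}_p\bar\theta(X)=0$ and then $X=0$. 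I would stress here that no control of $\|\rho\|_{C^1}$ is needed although the hypothesis only provides $C^0$-smallness: the possibly large factor $\mathrm{d}_p\rho_t(X)$ sits in the normal component and therefore cannot cancel a nonvanishing tangential contribution.

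In the embedded case it remains to show that $\theta_{\rho,t}$ is globally injective, because a continuous injection from the compact $M$ into $\R^{d+1}$ is automatically a homeomorphism onto its image, so that an injective immersion out of $M$ is an embedding. Here I would invoke the tubular-neighborhood theorem for $\Sigma$: the map $F(p,s)\defr\bar\theta(p)+s\,\nu_\Sigma(p)$ has invertible differential along $M\times\{0\}$ precisely because $\nu_\Sigma(p)\perp T_p\Sigma$, so by the inverse function theorem together with compactness of $M$ there is $\varepsilon>0$ such that $F$ restricted to $M\times(-\varepsilon,\varepsilon)$ is a $C^{2+k}$-diffeomorphism onto an open neighborhood of $\Sigma$; hence whenever $\|\rho\|_{C^0([0,T]\times M)}<\varepsilon$ the map $p\mapsto\theta_{\rho,t}(p)=F(p,\rho_t(p))$ is injective because $p\mapsto(p,\rho_t(p))$ is. Taking the quantitative meaning of ``sufficiently small'' to be $\min\{C_0^{-1},\varepsilon\}$ then settles the embedded case; in the immersed case only the first two steps are used, and the required unit normal on each $\Gamma_\rho(t)$ is obtained from the orientability of $M$. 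The only genuinely non-formal ingredient is the tubular-neighborhood argument, and the sole care it requires is that the single smallness threshold $\varepsilon$ (which depends on $\bar\theta$ and $\nu_\Sigma$ alone) works uniformly in $t$; everything else is direct verification.
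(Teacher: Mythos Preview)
Your argument is correct and follows the standard route: check the space--time regularity of $\theta_\rho$ via the product structure with the fixed $C^{2+k}$ factor $\nu_\Sigma$, obtain the immersion property from the tangential/normal decomposition of $\mathrm{d}_p\theta_{\rho,t}$ together with invertibility of $\Id+\rho_t(p)L_p$ for $\|\rho\|_{C^0}$ small, and in the embedded case upgrade to injectivity via the tubular-neighborhood map $F(p,s)=\bar\theta(p)+s\,\nu_\Sigma(p)$. The paper itself does not prove this lemma but refers to \cite[Corollary 2.63]{Buerger}; the argument recorded there is essentially the one you give, so there is nothing to contrast. Two minor remarks: your observation that only $C^0$-smallness is needed (because the $\mathrm{d}_p\rho_t$ contribution is purely normal) is exactly the right point to emphasize; and when you invoke the tubular neighborhood, the passage from local invertibility of $F$ to a uniform $\varepsilon$ with global injectivity on $M\times(-\varepsilon,\varepsilon)$ uses not just compactness but also that $F$ is injective on $M\times\{0\}$ (i.e., that $\bar\theta$ is an embedding), which is precisely why this step is available only in the embedded case---you handle this correctly, but it is worth making the dependence explicit.
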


Note, that we forgo the Hölder setting used in \cite{AbelsBuergerGarcke} for simplicity. Instead, we formulate the estimate resulting from the properties of the Hölder functions explicitely in Theorem \ref{lokEx_param}. With 
\begin{align*}
H(\rho) \defr H_{\Gamma_\rho} \phantom{xxx} \text{ and } \phantom{xxx}
a(\rho) \defr \frac{1}{\nu_\Sigma \cdot \nu_{\Gamma_\rho}},
\end{align*}
the short time existence result reads as follows (see \cite[Theorem 3.28]{AbelsBuergerGarcke}).

\begin{theorem}\label{lokEx_param}
Let $G \in C^7(\R)$ with $G''>0$ and $g \defr G-G' \cdot \Id > 0$. Moreover, let $\Sigma=\bar{\theta}(M)$ be a $C^5$-immersed closed hypersurface with unit normal $\nu_\Sigma$. Let $u_0 \in C^4(M)$ and $\delta_1>0$ be arbitrary. Then, choose $\delta_0 = \delta_0(\Sigma,u_0,\delta_1)>0$ and $T = T(\Sigma,u_0,\delta_1)>0$ sufficiently small. For every function $\rho_0 \in C^4(M)$ with $\|\rho_0\|_{C^4(M)} < \delta_1$ and $\|\rho_0\|_{C^2(M)} < \delta_0$, there exists a solution $(\rho,u)$ with $\rho, u \in C^1\big([0,T],C^0(M)\big) \cap C^0\big([0,T],C^2(M)\big)$ to
\begin{align*}
\left\{
\begin{aligned}
\partial_t \rho \phantom{bl} &= \phantom{bl} g(u)a(\rho)H(\rho) & &\text{ on } [0,T] \times M, \\
\partial_t u \phantom{bl} &= \phantom{bl} \Delta_{\Gamma_\rho} G'(u) + g(u) a(\rho) H(\rho) \nu_\Sigma \cdot \nabla_{\Gamma_\rho} u + g(u) H(\rho)^2 u & &\text{ on } [0,T] \times M, \\
\rho(0) \phantom{bl} &= \phantom{bl} \rho_0 & &\text{ on } M, \\
u(0) \phantom{bl} &= \phantom{bl} u_0 & &\text{ on } M.
\end{aligned}
\right.
\end{align*}
Furthermore, there exists a constant $R = R(\Sigma,u_0,\delta_1)>0$ independent of $\rho_0$ with 
\begin{align*}
\|\partial_t \rho(t)-\partial_t \rho(0)\|_{C^0(M)} + \|\rho(t)-\rho(0)\|_{C^2(M)} &\leq RT^{1/4} \phantom{xx} \text{and} \\
\|\partial_t u(t)-\partial_t u(0)\|_{C^0(M)} + \|u(t)-u(0)\|_{C^2(M)} &\leq RT^{1/4}
\end{align*}
for all $t \in [0,T]$. For any two solutions, there exists $\overline{T} \in (0,T]$ such that the solutions coincide on $[0,\overline{T}]$.
\end{theorem}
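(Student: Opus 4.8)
The plan is to read the parameterized system in Theorem~\ref{lokEx_param} as a coupled quasilinear parabolic system for the pair $(\rho,u)$ on the fixed compact manifold $M$ and to solve it by a Banach fixed-point argument in parabolic Hölder spaces. First I would make the quasilinear structure explicit. From $\theta_\rho=\bar\theta+\rho\,\nu_\Sigma$ one reads off $V^{\mathrm{tot}}=\partial_t\rho\,\nu_\Sigma$, hence $V=\partial_t\rho/a(\rho)$, which is why the first equation takes the form $\partial_t\rho=g(u)a(\rho)H(\rho)$. The quantities $H(\rho)$, $a(\rho)=1/(\nu_\Sigma\cdot\nu_{\Gamma_\rho})$ and the operators $\nabla_{\Gamma_\rho}$, $\Delta_{\Gamma_\rho}$ are smooth functions of $\rho$ and its first (and, for $H(\rho)$, second) spatial derivatives as long as $\|\rho\|_{C^1}$ stays small, so that $\nu_\Sigma\cdot\nu_{\Gamma_\rho}$ is bounded away from $0$ and $\Gamma_\rho$ stays a graph close to $\Sigma$. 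Extracting principal parts, the linearization of $\rho\mapsto g(u)a(\rho)H(\rho)$ at $\rho\equiv0$ has principal part $g(u_0)\Delta_\Sigma$ — the linearized mean-curvature operator equals $\Delta_\Sigma$ plus a zeroth-order term, the sign being fixed correctly by the convention $H=-\Div_\Sigma\nu$ — while $\Delta_{\Gamma_\rho}G'(u)=\Div_{\Gamma_\rho}\!\big(G''(u)\nabla_{\Gamma_\rho}u\big)$ has principal part $G''(u_0)\Delta_{\Gamma_\rho}$. Since $g>0$ and $G''>0$ on the relevant range of values, both equations are uniformly parabolic once $\rho$ is small in $C^1$ and $u$ is close to $u_0$. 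The coupling is mild: $u$ enters the $\rho$-equation only through the zeroth-order factor $g(u)$, while $\rho$ enters the $u$-equation only up to second order, through the metric of $\Gamma_\rho$ and the factors $a(\rho)$, $H(\rho)$.

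Next I would set up the iteration. Fix $\alpha=1/2$ and let $X_T\defr C^{2+\alpha,\,1+\alpha/2}([0,T]\times M)$ for each scalar component; this space embeds continuously into $C^1\big([0,T],C^0(M)\big)\cap C^0\big([0,T],C^2(M)\big)$, so any solution produced in $X_T\times X_T$ automatically has the regularity claimed. Given $(\bar\rho,\bar u)$ in the closed ball $B_r\subset X_T\times X_T$ centred at the time-constant extension of $(\rho_0,u_0)$, I would freeze the nonlinear coefficients at $(\bar\rho,\bar u)$ and solve the two resulting \emph{decoupled, linear} parabolic Cauchy problems on $[0,T]\times M$ — for $\rho$ with datum $\rho_0$ and for $u$ with datum $u_0$ — setting $\Phi(\bar\rho,\bar u)\defr(\rho,u)$. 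Linear existence, uniqueness and Schauder estimates on the closed manifold $M$ are classical, and this is the step that consumes regularity: the frozen coefficients are assembled from $G$, the parameterization of $\Sigma$ and $\bar\rho,\bar u$, and must lie in $C^{\alpha,\alpha/2}$ with norms controlled solely in terms of $\Sigma,u_0,\delta_1$, which is why the hypotheses ask for $G\in C^7$, $\Sigma$ of class $C^5$ and $\rho_0,u_0\in C^4$ — the surplus of derivatives being absorbed by the geometric nonlinearities, by the coefficient differences needed for the contraction, and by the smoothness of the data. (One could equally run the scheme with maximal $L^p$-regularity in place of Schauder theory.)

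Then I would close the argument by taking $T$, and the $C^2$-scale $\delta_0$, small. Since $\Phi(\bar\rho,\bar u)$ carries the prescribed initial data, the parabolic Hölder estimate gives $\|\Phi(\bar\rho,\bar u)-(\rho_0,u_0)\|_{C^0([0,T],C^2(M))}\le C\,T^{\alpha/2}=C\,T^{1/4}$ with $C=C(\Sigma,u_0,\delta_1)$; this is where both smallness scales enter — $\|\rho_0\|_{C^4}<\delta_1$ for uniform bounds on the coefficients, and $\|\rho_0\|_{C^2}<\delta_0$ to keep the metric and unit normal of $\Gamma_{\bar\rho}$, hence $a(\bar\rho)$ and the ellipticity constants, uniformly close to those of $\Sigma$. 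Choosing $r$ comparable to this $T^{1/4}$ bound and then $T$ small enough makes $\Phi$ map $B_r$ into itself. For the contraction one subtracts the two linear problems attached to inputs $(\bar\rho_1,\bar u_1)$ and $(\bar\rho_2,\bar u_2)$, uses that their frozen coefficients agree at $t=0$ (both being built from $\rho_0,u_0$) and depend Lipschitz-continuously on the inputs, and thereby gains a positive power of $T$, so $\Phi$ is a contraction on $B_r$ for $T$ small. Banach's fixed-point theorem then yields a unique $(\rho,u)\in B_r$, the desired solution, and the asserted bound with constant $R$ is precisely the self-mapping estimate above. For uniqueness among \emph{all} solutions in the weaker class $C^1([0,T],C^0)\cap C^0([0,T],C^2)$: given two of them, on a sufficiently small subinterval $[0,\overline T]$ each lies in the contraction ball $B_r$ by continuity in time and hence both coincide with the unique fixed point there; alternatively, subtracting the equations produces a linear parabolic system for the difference with bounded coefficients and vanishing initial data, which a Gronwall/energy estimate forces to vanish.

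The hard part will be the first step: writing $H(\rho)$, $a(\rho)$ and $\Delta_{\Gamma_\rho}$ as explicit quasilinear operators in $\rho$, isolating their principal parts, and verifying that the remainders — together with all coefficients of the linearized problems and their differences in $(\bar\rho,\bar u)$ — lie in the required parabolic Hölder spaces with norms depending only on $\Sigma,u_0,\delta_1$, not on the particular $\rho_0$. Once this bookkeeping is done and the two smallness scales $\delta_0\ll\delta_1$ are correctly separated, the linear Schauder theory, the self-map and contraction estimates, and the uniqueness statement all follow along standard lines.
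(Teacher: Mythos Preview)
The paper does not contain a proof of this theorem: it is quoted in the Preliminaries section from the companion paper \cite{AbelsBuergerGarcke} (explicitly ``see \cite[Theorem 3.28]{AbelsBuergerGarcke}''), so there is no in-paper proof to compare your proposal against. Your outline is the standard route for such results --- recast as a quasilinear parabolic system on the fixed manifold, linearize, apply Schauder theory, and close by a contraction in parabolic H\"older spaces with a small-time/small-data argument --- and is consistent with how the paper uses the theorem (in particular, the $T^{1/4}$ estimates and the uniformity in $\rho_0$ for $\|\rho_0\|_{C^4}<\delta_1$, $\|\rho_0\|_{C^2}<\delta_0$ are exactly what the later sections exploit). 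Whether the cited proof in \cite{AbelsBuergerGarcke} proceeds via a single fixed-point map for the pair $(\rho,u)$ as you suggest, or via a different decoupling, cannot be determined from the present paper.
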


In particular, the result is formulated for the case of immersed hypersurfaces and yields a uniform lower bound
on the existence time that allows for small changes in the initial value of the height function. This is crucial for our proof of the existence of self-intersections in Section \ref{ChapSI}.

\subsection{Maximum Principle}

A matrix valued function $A: \Omega \rightarrow \R^{n \times n}$, $\Omega \subset \R^d$, is called \textit{positive definite} on $\Omega$ if 
\begin{align*}
\xi^\top A(x) \xi > 0
\end{align*}
holds for every $x \in \Omega$ and $\xi \in \R^n \setminus \{0\}$.
For two matrices $A,\widetilde{A} \in \R^{n \times n}$, we define
\begin{align*}
A \colon \widetilde{A} \defr \sum_{i,j=1}^n A_{ij} \widetilde{A}_{ij}.
\end{align*}

To prove conservation properties in Section \ref{ChapEigLsg}, we want to apply a maximum principle for functions defined on evolving closed hypersurfaces in the following setting:

\begin{ass}\label{ass_maxprinzip}
Let $\Gamma$ be a $C^1$-$\phantom{.}C^2$-evolving immersed closed hypersurface with reference surface $M \subset \R^{d+1}$ and global parameterization $\theta$. Furthermore, let $A: [0,T] \times M \rightarrow \R^{(d+1) \times (d+1)}$, $B: [0,T] \times M \rightarrow \R^{d+1}$ and $C: [0,T] \times M \rightarrow \R$ be continuous with $A$ symmetric and positive definite on $[0,T] \times M$. Let $w \in C^1\big([0,T],C^0(M)\big) \cap C^0\big([0,T],C^2(M)\big)$ and define
\begin{align*}
\mathcal{L}w \defr - \partial^\square w + A \colon D_\Gamma^2 w + B \cdot \nabla_\Gamma w + Cw.
\end{align*}
\end{ass}

The maximum principle we will use is proven in \cite[Corollary 2.148]{Buerger} and reads as follows.

\begin{proposition}[Maximum Principle on Evolving Closed Hypersurfaces]\label{MaxPrinzip_Folg} $\phantom{x}$ \\
Let Assumptions \ref{ass_maxprinzip} hold true with $\mathcal{L}w \geq 0$ on $[0,T] \times M$ and $w(0,\cdot) \leq 0$ on $M$. Then, we have $w(t,\cdot) \leq 0$ for all $t \in [0,T]$ and there exists $t_0 \in [0,T]$ with
\begin{align*}
w(t,\cdot) \equiv 0 \text{ for all } t \in (0,t_0]
\phantom{xx} \text{ and } \phantom{xx}
w(t,\cdot) < 0 \text{ for all } t \in (t_0,T].
\end{align*}
\end{proposition}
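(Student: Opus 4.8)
The plan is to transfer the whole problem to the fixed compact reference surface $M$, to recognise $\mathcal{L}$ there as a classical uniformly parabolic linear operator, and then to apply the textbook weak and strong maximum principles. First I would fix a finite atlas of $M$ and express $\theta_t$, the metric induced by $\theta_t$ and the second fundamental form of $\Gamma(t)$ in local coordinates; all of these depend continuously on $(t,x)\in[0,T]\times M$, and by compactness the metric is uniformly positive definite. The surface Hessian $D_\Gamma^2 w$, written in ambient coordinates, decomposes into its tangential--tangential block --- which in a chart equals $\partial_\alpha\partial_\beta w$ up to Christoffel corrections that are first order in $w$ --- plus a remainder involving the Weingarten map (hence the second fundamental form) that is purely first order in $w$. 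Contracting with the symmetric matrix $A$ and using that the restriction of $A(t,x)$ to the $d$-dimensional tangent space $T_p\Gamma(t)\subset\R^{d+1}$ is positive definite (because $A$ is positive definite on all of $\R^{d+1}$), hence uniformly so on the compact set $[0,T]\times M$, the term $A\colon D_\Gamma^2 w$ becomes $a^{\alpha\beta}(t,x)\,\partial_\alpha\partial_\beta w$ with a uniformly elliptic continuous principal part $(a^{\alpha\beta})$, modulo lower-order terms. Since $\partial^\square w=\partial_t w-V^{\mathrm{tot}}\cdot\nabla_\Gamma w$ contributes the time derivative together with a first-order term and $B\cdot\nabla_\Gamma w$ is first order, in coordinates
\begin{align*}
\mathcal{L}w &= -\partial_t w + a^{\alpha\beta}(t,x)\,\partial_\alpha\partial_\beta w + b^\alpha(t,x)\,\partial_\alpha w + C(t,x)\,w
\end{align*}
is a linear parabolic operator with continuous coefficients. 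The assumed regularity $w\in C^1\big([0,T],C^0(M)\big)\cap C^0\big([0,T],C^2(M)\big)$ makes $\partial_t w$ and the spatial derivatives of $w$ up to order two continuous on $[0,T]\times M$, which is precisely the regularity the classical theorems require.

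Next I would remove the sign ambiguity of the zeroth-order term: the substitution $w=e^{\lambda t}v$ with $\lambda>\max_{[0,T]\times M}C$ replaces the zeroth-order coefficient by $C-\lambda\le 0$ and preserves the inequality $\mathcal{L}w\ge 0$ (for the transformed operator acting on $v$) as well as the relations $w\le 0\Leftrightarrow v\le 0$, $w\equiv 0\Leftrightarrow v\equiv 0$ and $w<0\Leftrightarrow v<0$; so I may and do assume $C\le 0$. Because $M$ is closed there is no lateral boundary, so the parabolic boundary of $[0,T]\times M$ is $\{0\}\times M$, and the classical weak parabolic maximum principle gives $w(t,\cdot)\le 0$ for all $t\in[0,T]$.

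For the dichotomy I would set
\begin{align*}
t_0 \defr \sup\big\{\, t\in[0,T] \ : \ w(s,\cdot)\equiv 0 \text{ on } M \text{ for all } s\in(0,t] \,\big\}
\end{align*}
with the convention $\sup\emptyset=0$. Continuity of $t\mapsto w(t,\cdot)\in C^0(M)$ gives $w(t,\cdot)\equiv 0$ for all $t\in(0,t_0]$. Suppose now $w(t_1,p_1)=0$ for some $p_1\in M$ and some $t_1\in(t_0,T]$. Since $w\le 0$, its maximum value $0$ is then attained at $(t_1,p_1)$ with $t_1>0$, while $\mathcal{L}w\ge 0$ and $C\le 0$; hence the parabolic strong maximum principle (applied in a chart around $(t_1,p_1)$ and then propagated backward in time using connectedness of $M$) forces $w\equiv 0$ on $[0,t_1]\times M$, so $w(s,\cdot)\equiv 0$ for all $s\in(0,t_1]$. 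But then $t_1$ belongs to the set defining $t_0$, whence $t_1\le t_0$, contradicting $t_1>t_0$. Therefore $w(t,\cdot)<0$ on all of $M$ for every $t\in(t_0,T]$, and together with the previous two paragraphs this is the assertion.

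The hard part is the coordinate bookkeeping of the first step: verifying that $A\colon D_\Gamma^2 w$ has a uniformly elliptic principal part inherited from the positivity of $A$ on $\R^{d+1}$, while all curvature-, Christoffel- and velocity-contributions, together with the effect of the moving frame, are absorbed into continuous lower-order coefficients. Once that is in place, the two maximum-principle steps are entirely standard on the compact connected manifold $M$; the precise statement obtained this way is the one quoted as \cite[Corollary 2.148]{Buerger}. A minor point to verify along the way is that the available regularity of $w$ indeed suffices to invoke those classical results, which it does.
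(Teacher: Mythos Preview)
Your argument is correct and follows the same broad architecture that the paper (via \cite{Buerger}) uses: pass to local coordinates on the compact reference surface $M$, recognise $\mathcal{L}$ as a linear uniformly parabolic operator with continuous coefficients, establish $w\le 0$ by a weak maximum principle, and then obtain the dichotomy from the parabolic strong maximum principle together with connectedness of $M$. The one genuine difference is in how the zeroth-order coefficient $C$ of indeterminate sign is handled. The paper remarks that the standard weak maximum principles of Evans and Ecker, which require $C\le 0$, ``can not be used'' directly, and therefore cites a custom weak maximum principle (\cite[Section~2.4.1]{Buerger}) proved without any sign condition on $C$; the strong part is taken from \cite[Theorem~4.26]{RenardyRogers}, likewise valid without assumptions on $C$. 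You instead perform the classical substitution $w=e^{\lambda t}v$ with $\lambda>\max C$, which shifts the zeroth-order coefficient to $C-\lambda\le 0$ while preserving both the differential inequality and all sign relations, and then quote the textbook weak and strong maximum principles with $C\le 0$. This is entirely legitimate and arguably more elementary, since it avoids re-deriving a maximum principle; the paper's route has the modest advantage of keeping the statement self-contained without an auxiliary transformation. Your coordinate bookkeeping in the first step---that the tangential restriction of the positive-definite matrix $A$ yields a uniformly elliptic principal part, while the Weingarten, Christoffel and velocity contributions are continuous first- and zeroth-order terms---is exactly what is needed and matches what \cite{Buerger} carries out.
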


\section{Properties of Solutions}\label{ChapEigLsg}

In the following, we will investigate properties of solutions to our equations \eqref{eq_Intro_GLS}, placing emphasis on whether or not and to what extent the surface in our setting evolves with similar properties as for the usual mean curvature flow. Universal properties and properties coinciding with the usual mean curvature flow will be analyzed in the following setting.

\begin{ass}\label{ass_concentration}
Let $G \in C^3(\R)$ and define $g \defr G - G' \cdot \Id$. \\
Moreover, let $\Gamma$ be a \mbox{$C^1$-$\phantom{.}C^2$-}evolving immersed closed hypersurface with reference surface $M \subset \R^{d+1}$ and let the function \mbox{$c \in C^1\big([0,T],C^0(M)\big) \cap C^0\big([0,T],C^2(M)\big)$} be such that the pair $(\Gamma,c)$ is a solution to the system \eqref{eq_Intro_GLS}
\begin{subequations}
\begin{align*}
V &= g(c) H, \\
\partial^\square c &= \Delta_\Gamma \big( G'(c) \big) + cHV. 
\end{align*}
\end{subequations}
\end{ass}

Furthermore, we will construct explicit counterexamples to show a behavior differing from the usual mean curvature flow. Because they are used in \cite{AbelsBuergerGarcke} for the proof of short time existence of \eqref{eq_Intro_GLS}, we will focus on energy densities $G$ that fulfill the parabolicity conditions 
\begin{align*}
G''>0 \phantom{xx} \text{and} \phantom{xx} g>0.
\end{align*}
With the condition $g>0$, we call \eqref{eq_Intro_GLS1} a \textit{parabolically scaled mean curvature flow}. In order to apply the short time existence result (Theorem \ref{lokEx_param}), we also require $G \in C^7(\R)$.

\subsection{Gradient Flow of the Energy Functional}\label{Sec_GradientFlow}

In this section, we establish a connection between the equations \eqref{eq_Intro_GLS} and the energy functional \eqref{eq_Intro_Energy}: The evolution of a pair $(\Gamma,c)$ that decreases the energy \eqref{eq_Intro_Energy} most efficiently is characterized by our equations \eqref{eq_Intro_GLS}. In other words, the system of equations is a gradient flow of the energy functional. \\
First, we state that a solution of \eqref{eq_Intro_GLS} can never increase the energy \eqref{eq_Intro_Energy}.

\begin{theorem}[Non-Increase of Energy]\label{energy_decrease} $\phantom{x}$ \\
Suppose Assumptions \ref{ass_concentration} are valid. Then, the energy $t \mapsto \mathrm{E}\big(\Gamma(t),c(t)\big)$ is non-increasing. 
\end{theorem}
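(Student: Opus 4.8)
The plan is to differentiate the energy $\mathrm{E}(\Gamma(t),c(t)) = \int_{\Gamma(t)} G(c(t))\,\mathrm{d}\mathcal{A}$ in time and show that the result is manifestly non-positive after using both equations in \eqref{eq_Intro_GLS}. First I would invoke the transport theorem (Leibniz/Reynolds rule) for integrals over an evolving hypersurface: for a sufficiently regular $f$ on $\Gamma$,
\begin{align*}
\frac{\mathrm{d}}{\mathrm{d}t} \int_{\Gamma(t)} f \, \mathrm{d}\mathcal{A} = \int_{\Gamma(t)} \big( \partial^\square f - f H V \big) \, \mathrm{d}\mathcal{A},
\end{align*}
where the term $-fHV$ comes from the rate of change of the area element (recall $H = -\Div_\Gamma \nu$ with our sign convention, so that $\partial^\square(\mathrm{d}\mathcal{A}) = -HV\,\mathrm{d}\mathcal{A}$). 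Applying this with $f = G(c)$ and using $\partial^\square(G(c)) = G'(c)\,\partial^\square c$ gives
\begin{align*}
\frac{\mathrm{d}}{\mathrm{d}t}\mathrm{E}(\Gamma(t),c(t)) = \int_{\Gamma(t)} \Big( G'(c)\,\partial^\square c - G(c) H V \Big) \mathrm{d}\mathcal{A}.
\end{align*}

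Next I would substitute the diffusion equation \eqref{eq_Intro_GLS2}, $\partial^\square c = \Delta_\Gamma(G'(c)) + cHV$, into this expression. The source term contributes $\int_{\Gamma(t)} G'(c) cHV \, \mathrm{d}\mathcal{A}$, which combines with the area term to give $\int_{\Gamma(t)} \big( G'(c)c - G(c)\big) H V \, \mathrm{d}\mathcal{A} = -\int_{\Gamma(t)} g(c) H V \, \mathrm{d}\mathcal{A}$. For the Laplace–Beltrami term I would integrate by parts on the closed hypersurface (no boundary terms): $\int_{\Gamma(t)} G'(c)\,\Delta_\Gamma(G'(c))\,\mathrm{d}\mathcal{A} = -\int_{\Gamma(t)} |\nabla_\Gamma(G'(c))|^2 \, \mathrm{d}\mathcal{A}$. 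Finally I would use the scaled mean curvature flow equation \eqref{eq_Intro_GLS1} in the form $V = g(c)H$, so $g(c)HV = V^2 \geq 0$ (equivalently $g(c)H^2 \geq 0$ when $g > 0$, but the identity $g(c)HV = V^2$ holds regardless of sign). Altogether,
\begin{align*}
\frac{\mathrm{d}}{\mathrm{d}t}\mathrm{E}(\Gamma(t),c(t)) = -\int_{\Gamma(t)} |\nabla_\Gamma(G'(c))|^2 \, \mathrm{d}\mathcal{A} - \int_{\Gamma(t)} V^2 \, \mathrm{d}\mathcal{A} \leq 0,
\end{align*}
which is the claim, and in fact exhibits the gradient-flow structure promised in Section \ref{Sec_GradientFlow}.

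The main obstacle is regularity and the precise justification of the transport theorem and the integration by parts at the stated level of smoothness. Under Assumptions \ref{ass_concentration} we only have $c \in C^1([0,T],C^0(M)) \cap C^0([0,T],C^2(M))$ and $\Gamma$ a $C^1$-$C^2$-evolving hypersurface, so $G'(c)$ is $C^2$ in space (since $G \in C^3$) and $C^1$ in time — just enough for $\Delta_\Gamma(G'(c))$ and $\nabla_\Gamma(G'(c))$ to be continuous and for the integration by parts on the closed $C^2$-surface to be valid; the subtlety is checking that $t \mapsto \mathrm{E}(\Gamma(t),c(t))$ is genuinely differentiable and that the transport identity holds in this setting rather than merely for smooth data. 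I would handle this either by citing the transport theorem from \cite{Gpde} or \cite{Buerger} in the appropriate regularity class, or by first working on embedded patches, pulling everything back to the fixed reference surface $M$ (where the time derivative is the ordinary $\partial_t$ acting on functions of $(t,p)$), and then using a partition of unity to globalize; immersedness causes no trouble since all integrals are defined via the pullback to $M$. A minor point worth noting is that the identity $g(c)HV = V^2$ uses $V = g(c)H$ but does not require $g > 0$, so the energy estimate holds under the bare Assumptions \ref{ass_concentration} without the parabolicity conditions.
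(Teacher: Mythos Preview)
Your proposal is correct and follows essentially the same route as the paper's proof: apply the transport theorem, substitute \eqref{eq_Intro_GLS2}, integrate by parts via Gau\ss' theorem on the closed hypersurface, and use $g(c)HV = V^2$ from \eqref{eq_Intro_GLS1} to obtain $\frac{\mathrm{d}}{\mathrm{d}t}\mathrm{E} = -\int_{\Gamma(t)} |\nabla_\Gamma G'(c)|^2 + V^2 \,\mathrm{d}\mathcal{H}^d \leq 0$. Your additional remarks on regularity and on the fact that $g>0$ is not needed here are accurate and go slightly beyond what the paper spells out.
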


\begin{proof}
With the help of the transport theorem (Proposition \ref{transporttheorem}) and Gauß' theorem on closed hypersurfaces (Proposition \ref{partialintegration}), we get 
\begin{align*}
\frac{\mathrm{d}}{\mathrm{d}t} \mathrm{E}(\Gamma,c)
&= \int_{\Gamma(t)} G'(c) \partial^\square c - G(c)HV \, \mathrm{d}\mathcal{H}^d 
= \int_{\Gamma(t)} G'(c) \Delta_\Gamma \big(G'(c)\big) -g(c)HV \, \mathrm{d}\mathcal{H}^d \\
&= - \int_{\Gamma(t)} \big| \nabla_\Gamma G'(c) \big|^2 + V^2 \, \mathrm{d}\mathcal{H}^d
\leq 0. \qedhere
\end{align*}
\end{proof}

Hence, a solution $(\Gamma,c)$ of \eqref{eq_Intro_GLS} can never increase the energy functional $\mathrm{E}$. As long as the geometry of the system changes, i.e. $V \neq 0$, the energy will actually decrease. Also, assuming $G'$ not to be constant, a non-uniform distribution of the quantity described by $c$ in general also leads to an actual decrease of the energy. Note that due to $V = g(c)H$ by \eqref{eq_Intro_GLS1} and because a closed hypersurface $\Gamma(t)$ cannot have vanishing mean curvature $H$ everywhere (Proposition \ref{mc_zero}), the parabolicity condition $g>0$ assumed in \cite{AbelsBuergerGarcke} implies an actual decrease of the energy. \\
In the following, we explain formally how the equations \eqref{eq_Intro_GLS} can be seen as a gradient flow of the energy functional \eqref{eq_Intro_Energy}. This means that a solution $(\Gamma,c)$ of \eqref{eq_Intro_GLS} even decreases the energy functional in an optimal way. 
The techniques used in this section are based on corresponding arguments in \cite{GarckeDMV} for the usual mean curvature flow of evolving hypersurfaces. We extend these considerations to our setting with the additional concentration that describes a distribution on the surface. A solution of \eqref{eq_Intro_GLS} satisfies mass conservation (see Theorem \ref{Massenerhaltung}). So, for a constant mass $m \in \R_{>0}$, we consider the set
\begin{align*}
M^m &\defr \left\{ (\Sigma,c) \, \big| \, \Sigma \subset \R^{d+1} \text{ smooth, closed, embedded hypersurface}, \phantom{\int_\Sigma} \right. \\
&\phantom{\defr (\Sigma,c) \, \big| \, x } \left. c: \Sigma \rightarrow \R \text{ smooth concentration with } \int_\Sigma c \, \mathrm{d}\mathcal{H}^d = m \right\} 
\end{align*}
of all surfaces $\Sigma$ in $\R^{d+1}$ and concentrations $c: \Sigma \rightarrow \R$ such that the total mass of the quantity, whose distribution on $\Sigma$ is described by $c$, equals $m$. In a formal way, we endow $M^m$ with the tangent space
\begin{align*}
T_{(\Sigma,c)}M^m = \left\{ \big(V,w\big) \, \big| \, V,w: \Sigma \rightarrow \R \text{ smooth with } \int_\Sigma w-cHV \, \mathrm{d}\mathcal{H}^d = 0 \right\}. 
\end{align*}
Here, $V$ is a possible normal velocity of $\Sigma$, $w$ is a variation of the concentration and the additional condition $\int_\Sigma w-cHV = 0$ with mean curvature $H$ of $\Sigma$ ensures that the change is such that mass conservation holds. We now elaborate how such a pair $(V,w)$ of smooth functions $V,w: \Sigma \rightarrow \R$ arises as the differential of a map in $M^m$, hence as a ``tangent vector'' of $M^m$ in a point $(\Sigma,c)$. \\
Let $\theta_t: \Sigma \rightarrow \R^{d+1}, \, \theta_t(p) \defr p + tV(p)\nu(p)$ with the smooth unit normal $\nu$ of $\Sigma$. According to \cite[Proposition 2.60]{Buerger}, for $|t|$ sufficiently small, $\theta_t$ is a smooth diffeomorphism onto its image $\Gamma_t \defr \theta_t(\Sigma)$ which again is a smooth, closed, embedded hypersurface in $\R^{d+1}$. In particular, $\big\{ \{t\} \times \Gamma_t \, \big| \, t \in (-\varepsilon,\varepsilon) \big\}$ is an evolving hypersurface as in Definition \ref{evolvHF_Def} and its normal velocity in $t=0$ equals $V$. So, $V$ really is the normal velocity of $\Sigma=\Gamma_0$ as already stated above. \\ 
Moreover, with
\begin{align*}
m_t \defr \left( \int_{\Gamma_t} 1 \, \mathrm{d}\mathcal{H}^d \right)^{-1} \left( m - \int_{\Gamma_t} (c+tw) \circ \theta_t^{-1} \, \mathrm{d}\mathcal{H}^d \right) \in \R,
\end{align*}
$c_t \defr (c+tw) \circ \theta_t^{-1} + m_t: \Gamma_t \rightarrow \R$ defines a smooth function on $\Gamma_t$. We consider $\eta(t) \defr (\Gamma_t,c_t)$ for $t \in (-\varepsilon,\varepsilon)$ and claim that $\eta$ is a map in $M^m$ through $(\Sigma,c)$ with differential $(V,w)$. \\
By construction, we have
\begin{align*}
\int_{\Gamma_t} c_t \, \mathrm{d}\mathcal{H}^d
&= \int_{\Gamma_t} (c+tw) \circ \theta_t^{-1} + m_t \, \mathrm{d}\mathcal{H}^d \\
&= \int_{\Gamma_t} (c+tw) \circ \theta_t^{-1} \, \mathrm{d}\mathcal{H}^d + \left( m - \int_{\Gamma_t} (c+tw) \circ \theta_t^{-1} \mathrm{d}\mathcal{H}^d \right) 
= m
\end{align*}
for every $t \in (-\varepsilon,\varepsilon)$, such that $\eta(t) \in M^m$ holds for every $t \in (-\varepsilon,\varepsilon)$. Furthermore, we have $\eta(0)=(\Sigma,c)$, since
\begin{align}\label{eq_EQ_normaltd1}
\int_{\Gamma_t} (c+tw) \circ \theta_t^{-1} \, \mathrm{d}\mathcal{H}^d_{\phantom{x}|t=0}
= \int_\Sigma c \, \mathrm{d}\mathcal{H}^d
= m
\end{align}
implies $m_0=0$. In addition, 
\begin{align*}
\partial^\square \big( (c+tw) \circ \theta_t^{-1} \big)_{|t=0}
= \frac{\mathrm{d}}{\mathrm{d}t} (c+tw)_{|t=0} - \partial_t \theta_t \cdot \nabla_{\Gamma_t} \big((c+tw) \circ \theta_t^{-1} \big)_{|t=0}
= w - V\nu \cdot \nabla_\Sigma c
= w
\end{align*}
holds because as an element of the tangent space of $\Sigma$, the surface gradient $\nabla_\Sigma c$ is perpendicular to the normal $\nu$. With the transport theorem (Proposition \ref{transporttheorem}) and the additional condition for $(V,w)$,
\begin{align}\label{eq_EQ_normaltd2}
\frac{\mathrm{d}}{\mathrm{d}t}_{|t=0} \int_{\Gamma_t} (c+tw) \circ \theta_t^{-1} \, \mathrm{d}\mathcal{H}^d
&= \int_\Sigma \partial^\square \big((c+tw) \circ \theta_t^{-1}\big) - \big((c+tw)\circ \theta_t^{-1}\big)_{|t=0} HV \, \mathrm{d}\mathcal{H}^d \notag \\
&= \int_\Sigma w - cHV \, \mathrm{d}\mathcal{H}^d
= 0
\end{align}
follows. Equations \eqref{eq_EQ_normaltd1} and \eqref{eq_EQ_normaltd2} yield $\partial^\square m_{t \, |t=0} = \frac{\mathrm{d}}{\mathrm{d}t} m_{t \, |t=0} = 0$,
such that we finally have
\begin{align*}
\partial^\square c_{t \, |t=0} 
= \partial^\square \big( (c+tw) \circ \theta_t^{-1} \big)_{|t=0} + \partial^\square m_{t \, |t=0}
= w.
\end{align*}
As the normal velocity of the evolving hypersurface $\big\{ \{t\} \times \Gamma_t \, \big| \, t \in (-\varepsilon,\varepsilon) \big\}$ in $t=0$ is $V$ and the normal time derivative of $c_t$ in $t=0$ is $w$, the pair $(V,w)$ fully determines the differential $\eta'(0)$. In particular, $(V,w)$ can be interpreted as ``tangential vector'' to $M^m$ in $(\Sigma,c)$. \\
On the tangent space $T_{(\Sigma,c)}M^m$ we define an $L^2$-$H^{-1}$-inner product
\begin{align*}
&\big\langle (V_1,w_1),(V_2,w_2) \big\rangle 
\defr \int_\Sigma V_1 V_2 \, \mathrm{d}\mathcal{H}^d + \int_\Sigma \nabla_\Sigma u_1 \cdot \nabla_\Sigma u_2 \, \mathrm{d}\mathcal{H}^d \\
&= \int_\Sigma V_1 V_2 \, \mathrm{d}\mathcal{H}^d + \int_\Sigma u_1 \big(w_2-cHV_2\big) \, \mathrm{d}\mathcal{H}^d 
= \int_\Sigma \big(V_1-u_1cH\big) V_2 \, \mathrm{d}\mathcal{H}^d + \int_\Sigma u_1w_2 \, \mathrm{d}\mathcal{H}^d,
\end{align*}
where $-\Delta_\Sigma u_i = w_i - cHV_i$ holds on $\Sigma$. These functions $u_i$ are well-defined, as due to Gauß' theorem on closed hypersurfaces (Proposition \ref{partialintegration}), $\int_\Sigma w_i-cHV_i \, \mathrm{d}\mathcal{H}^d = \int_\Sigma -\Delta_\Sigma u_i \, \mathrm{d}\mathcal{H}^d = 0$ is fulfilled for $(V_i,w_i) \in T_{(\Sigma,c)}M^m$. Using a Lax-Milgram type argument, one can show that this is exactly the solvability condition for $-\Delta_\Sigma u_i = w_i - cHV_i$ on $\Sigma$.
The choice of an $H^{-1}$-inner product for the concentration part ensures the conservation of mass and the $L^2$-inner product for the surface part results in decreasing surface area just as for the usual mean curvature flow. \\
Now, we want to identify the gradient flow of the energy functional $\mathrm{E}$ (see \eqref{eq_Intro_Energy}) with respect to this inner product. With the help of the transport theorem (Proposition \ref{transporttheorem}), the total differential of $\mathrm{E}$ in $(\Sigma,c)$ in direction of $(V,w) \in T_{(\Sigma,c)}M^m$ is given by
\begin{align*}
\mathrm{DE}(\Sigma,c)(V,w)
&= \frac{\mathrm{d}}{\mathrm{d}t} \mathrm{E}\big( \Gamma_t,c_t \big)_{\, | t=0}
= \frac{\mathrm{d}}{\mathrm{d}t} \int_{\Gamma_t} G(c_t) \, \mathrm{d}\mathcal{H}^d_{\, |t=0} \\
&= \int_{\Sigma} \partial^\square \big( G(c_t) \big)_{\, |t=0} - G(c)HV \, \mathrm{d}\mathcal{H}^d 
= \int_\Sigma G'(c) w - G(c)HV \, \mathrm{d}\mathcal{H}^d
\end{align*}
with $\Gamma_t$ and $c_t$ defined as before. If we choose $(V_g,w_g) \defr \text{grad } \mathrm{E}(\Sigma,c) \in T_{(\Sigma,c)}M^m$ as a notation for the gradient of $\mathrm{E}$ in $(\Sigma,c)$, then for any direction $(V,w) \in T_{(\Sigma,c)}M^m$,
\begin{align*}
\big\langle \text{grad } \mathrm{E}(\Sigma,c) , (V,w) \big\rangle
= \big\langle (V_g,w_g),(V,w) \big\rangle
= \int_\Sigma \big( V_g - u_g c H \big) V \, \mathrm{d}\mathcal{H}^d + \int_\Sigma u_g w \, \mathrm{d}\mathcal{H}^d
\end{align*}
holds with $-\Delta_\Sigma u_g = w_g - cHV_g$ as before. Since the gradient is defined through
\begin{align*}
\mathrm{DE}(\Sigma,c)(V,w)
= \big\langle \text{grad } \mathrm{E}(\Sigma,c) , (V,w) \big\rangle
\end{align*}
for all $(V,w) \in T_{(\Sigma,c)}M^m$, we obtain $u_g = G'(c)$ and
\begin{align*}
V_g &= u_g c H - G(c)H = \big(G'(c)c - G(c)\big) H, \\
w_g &= - \Delta_\Sigma u_g + cHV_g = -\Delta_\Sigma \big( G'(c) \big) + cHV_g.
\end{align*} 
As explained above, the differential $\frac{\mathrm{d}}{\mathrm{d}t} (\Gamma_t,c_t)_{\, |t=0}$ is determined by the normal velocity $V$ of the evolving hypersurface $\big\{ \{t\} \times \Gamma_t \, \big| \, t \in (-\varepsilon,\varepsilon) \big\}$ in $t=0$ and the normal time derivative $\partial^\square c$ of $c_t$ in $t=0$. The family $(\Gamma_t,c_t)_{t \in (-\varepsilon,\varepsilon)}$ hence is a solution to the desired gradient flow in $t=0$ if and only if
\begin{align*}
(V,\partial^\square c) = -\text{grad } \mathrm{E}(\Sigma,c) = -(V_g,w_g)
\end{align*}
is valid. So, the gradient flow of the energy functional $\mathrm{E}$ with respect to our $L^2-H^{-1}$-inner product is the system \eqref{eq_Intro_GLS} as claimed.

\subsection{Decrease of Surface Area and Infinite Existence Time}

A simple calculation shows a first analogy between the usual and the scaled mean curvature flow: The strict decrease of surface area known from the usual mean curvature flow also holds for the parabolically scaled one. In particular, stationary solutions are not possible. Without the parabolicity condition, a different behavior of the surface would be possible of course. 

\begin{theorem}[Decrease of Surface Area]\label{surfacearea} $\phantom{x}$ \\
Suppose Assumptions \ref{ass_concentration} are valid with $g>0$. Then, the surface area of $\Gamma$ is strictly decreasing.
\end{theorem}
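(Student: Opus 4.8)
The plan is to compute the time derivative of the surface area $\mathcal{A}(t) = \int_{\Gamma(t)} 1 \, \mathrm{d}\mathcal{H}^d$ directly via the transport theorem (Proposition \ref{transporttheorem}), exactly as in the proof of Theorem \ref{energy_decrease} but with the integrand $G(c)$ replaced by the constant $1$. The transport theorem gives
\begin{align*}
\frac{\mathrm{d}}{\mathrm{d}t} \int_{\Gamma(t)} 1 \, \mathrm{d}\mathcal{H}^d = \int_{\Gamma(t)} \partial^\square 1 - HV \, \mathrm{d}\mathcal{H}^d = - \int_{\Gamma(t)} HV \, \mathrm{d}\mathcal{H}^d,
\end{align*}
since $\partial^\square 1 = 0$. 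Now I would substitute the scaled mean curvature flow equation \eqref{eq_Intro_GLS1}, namely $V = g(c)H$, to obtain
\begin{align*}
\frac{\mathrm{d}}{\mathrm{d}t} \int_{\Gamma(t)} 1 \, \mathrm{d}\mathcal{H}^d = - \int_{\Gamma(t)} g(c) H^2 \, \mathrm{d}\mathcal{H}^d = - \int_{\Gamma(t)} \frac{1}{g(c)} V^2 \, \mathrm{d}\mathcal{H}^d \leq 0,
\end{align*}
where the last inequality uses the hypothesis $g > 0$. This already shows the area is non-increasing.

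To upgrade this to \emph{strict} decrease, I would argue that the integral cannot vanish at any time $t$. If it did, then $g(c)H^2 \equiv 0$ on $\Gamma(t)$, and since $g > 0$ this forces $H \equiv 0$ on $\Gamma(t)$; but a closed hypersurface in $\R^{d+1}$ cannot have mean curvature identically zero (this is recorded as Proposition \ref{mc_zero} and is already invoked in the discussion following Theorem \ref{energy_decrease}). Hence $\frac{\mathrm{d}}{\mathrm{d}t}\mathcal{A}(t) < 0$ for every $t \in [0,T]$, which gives strict monotonicity of $t \mapsto \mathcal{A}(t)$.

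This proof is essentially a one-line computation plus the citation of two already-available facts, so there is no serious obstacle. The only point requiring a little care is the justification that $t \mapsto \mathcal{A}(t)$ is actually differentiable and that the transport theorem applies under the regularity assumed in Assumptions \ref{ass_concentration} (here $\Gamma$ is only $C^1$-$C^2$ in time-space and $c$ is correspondingly of low regularity); but since the integrand is the constant function $1$, the transport theorem applies with no regularity demand on $c$ at all, and the $C^1$-in-time, $C^2$-in-space regularity of $\theta$ is exactly what Proposition \ref{transporttheorem} needs. If one wants to be careful about which ambient regularity the mean curvature term requires, one notes that $H \in C^0$ suffices since it only enters under an integral. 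I expect the whole proof to be three or four lines of display math.
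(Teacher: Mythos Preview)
Your proposal is correct and follows exactly the same route as the paper: apply the transport theorem to the constant integrand $1$, substitute $V = g(c)H$, and conclude strict negativity from $g>0$ together with Proposition~\ref{mc_zero}. The paper's proof is precisely your three-line computation without the additional regularity commentary.
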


\begin{proof}
The transport theorem (Proposition \ref{transporttheorem}) together with \eqref{eq_Intro_GLS1} yields
\begin{align*}
\frac{\mathrm{d}}{\mathrm{d}t} \int_{\Gamma} 1 \, \mathrm{d}\mathcal{H}^d
= - \int_{\Gamma} HV \, \mathrm{d}\mathcal{H}^d
= - \int_{\Gamma} g(c)H^2 \, \mathrm{d}\mathcal{H}^d.
\end{align*}
Due to $g>0$ and because a closed surface can not have vanishing mean curvature $H$ everywhere (Proposition \ref{mc_zero}), the surface area of $\Gamma$ is strictly decreasing.
\end{proof}

To get a further impression of the evolution prescribed by our system of equations \eqref{eq_Intro_GLS}, we study the radial symmetric case as in \cite{Garcke}. The usual mean curvature flow forces a convex, closed surface to shrink to a round point in finite time. Obviously, if the surface in our example does shrink to a single point, it also shrinks to a round point just as for the usual mean curvature flow; but roundness is true anyway due to the construction of the surface as radial symmetric sphere. So, we focus on the collapsing of the surface and whether this happens in finite time. \\
A radial symmetric situation means that the hypersurface $\Gamma(t) = \partial B_{R(t)}(0) \subset \R^{d+1}$ is a sphere and the concentration $c(t): \Gamma(t) \rightarrow \R$ is constant in space. As mass conservation is fulfilled (see Theorem \ref{Massenerhaltung}),
\begin{align*}
m = \int_{\Gamma(t)} c(t) \, \mathrm{d}\mathcal{H}^d
= c(t) \mathcal{H}^d\big( \Gamma(t) \big)
= \alpha_d c(t) R(t)^d 
\phantom{xx} \Leftrightarrow \phantom{xx} 
c(t) = \frac{m}{\alpha_d} R(t)^{-d}
\end{align*}
holds with a constant $\alpha_d$ only depending on the dimension $d$. So, the whole evolution of the pair $\big(\Gamma(t),c(t)\big)$ is characterized by the function $R: [0,T) \rightarrow (0,\infty)$. Due to our sign convention, we have
\begin{align*}
\nu(t,p) = \frac{p}{R(t)}
\phantom{xx} \text{ and } \phantom{xx}
H(t,p) = - \frac{d}{R(t)}
\end{align*}  
for the unit normal and the mean curvature of $\Gamma(t)$ in $p \in \Gamma(t)$. As 
\begin{align*}
\theta: [0,T] \times \Gamma_0 \rightarrow \R^{d+1}, \phantom{xx} \theta(t,z) \defr \frac{R(t)}{R_0}z
\end{align*}
defines a global parameterization for the evolving hypersurface, the normal velocity of $\Gamma(t)$ in $p=\theta(t,z) \in \Gamma(t)$ is given by
\begin{align*}
V(t,p) = \partial_t \theta(t,z) \cdot \nu(t,p)
= \frac{R'(t)}{R_0} z \cdot \frac{p}{R(t)}
= \frac{R'(t)}{R_0} z \cdot \frac{z}{R_0}
= R'(t).
\end{align*}
The normal time derivative of the constant-in-space function $c(t)$ in $p=\theta(t,z) \in \Gamma(t)$ is
\begin{align*}
\partial^\square c(t,p) 
= \partial^\circ c(t,p)
= \frac{\mathrm{d}}{\mathrm{d}t} c(t)
= -\frac{md}{\alpha_d} R(t)^{-d-1} R'(t)
\end{align*}
and thus the concrete form of the second equation \eqref{eq_Intro_GLS2}
\begin{align*}
\partial^\square c 
= -\frac{md}{\alpha_d} R^{-d-1} R' 
= 0 + \frac{m}{\alpha_d} R^{-d} \cdot \frac{-d}{R} \cdot R'
= \Delta_\Gamma \big( G'(c) \big) + cHV
\end{align*}
is automatically fulfilled. This was to be expected because the diffusion equation for spatially constant functions $c$ reduces to $\partial^\square c = cHV$. As also the geometric quantities $H$ and $V$ are spatially constant, this reduced equation is equivalent to mass conservation (cf. Theorem \ref{Massenerhaltung}) and we chose the time dependence of $c$ such that mass conservation is fulfilled. 
The impact of the additional concentration hence is limited to the non-constant scaling factor $g(c)$ in the mean curvature flow equation. Thus, the question we seek to answer is whether there exists a shape of $g$ such that the evolution of the geometry differs from the one for a constant function $g$, i.e., the usual mean curvature flow. \\
We now turn to the first equation which, in the radial symmetric case, transforms to
\begin{align*}
R' = V = g(c) H = - g\left( \frac{m}{\alpha_d} R^{-d} \right) \frac{d}{R} \defl f(R).
\end{align*}
Assuming the parabolicity condition $g>0$, we have $f<0$ on $(0,\infty)$ and therefore we can apply separation of variables which yields
\begin{align*}
t 
= \int_0^t \frac{R'(s)}{f(R(s))} \, \mathrm{d}s
= \int_{R_0}^{R(t)} \frac{1}{f(z)} \, \mathrm{d}z
\defl F\big(R(t)\big).
\end{align*}
Due to $f<0$ on $(0,\infty)$, $F: (0,\infty) \to \R$ is strictly decreasing and thus bijective onto its image. Therefore we can define $R(t) \defr F^{-1}(t)$ for all $t \in F\big((0,\infty)\big)$; but as only positive times $t>0$ are considered, we reduce to $t \in (0,T) \defr (0,\infty) \cap F\big((0,\infty)\big)$. We have
\begin{align}\label{T=F(0)}
\lim_{R \searrow 0} F(R) &= \int_{R_0}^0 \frac{1}{f(z)} \, \mathrm{d}z = - \int_0^{R_0} \frac{1}{f(z)} \, \mathrm{d}z > 0, \\
\lim_{R \nearrow \infty} F(R)  &= \int_{R_0}^{\infty} \frac{1}{f(z)} \, \mathrm{d}z < 0 \nonumber
\end{align}
and thus $(0,T) = \big(0,F(0)\big)$. In particular, 
\begin{align*}
R(t) = F^{-1}(t) \rightarrow F^{-1}(T) = F^{-1}\big(F(0)\big) = 0
\end{align*}
holds for $t \rightarrow T$. So, just as for the usual mean curvature flow, the hypersurface shrinks to a single point under the scaled mean curvature flow with $g>0$. Due to the assumed parabolicity condition $g>0$, the strict decrease of the surface area was already known from Theorem \ref{surfacearea}. 
But, in contrast to the usual mean curvature flow, even for $g>0$, the final time $T$ at which the surface collapses does not need to be finite for the scaled mean curvature flow. We have (see \eqref{T=F(0)})
\begin{align}\label{T}
T = F(0) = - \int_0^{R_0} \frac{1}{f(z)} \, \mathrm{d}z = \frac{1}{d} \int_0^{R_0} \frac{z}{g\left( \frac{m}{\alpha_d}z^{-d} \right)} \, \mathrm{d}z
\end{align}
and for a suitable choice of $g$, this turns out to be $T = \infty$. 
Choose for example the power function
\begin{align*}
G(c) \defr \frac{1}{1-s} c^s + \alpha c
\end{align*}
with $s < -\frac{2}{d}$ and $\alpha > 0$ satisfying $G(c) \rightarrow \infty$ for $c \rightarrow \infty$. It follows that
\begin{align*}
G''(c) = -s c^{s-2}
\phantom{xx} \text{ and } \phantom{xx}
g(c) = G(c)-G'(c)c = c^s
\end{align*}
and thus both parabolicity conditions $G''>0$ and $g>0$ are fulfilled for the physical relevant case $c>0$.

\begin{theorem}[Infinite Existence Time] $\phantom{x}$ \\
Let the energy density $G$, the evolving surface $\Gamma$ and the concentration $c$ be as above. The evolution of $(\Gamma,c)$ under \eqref{eq_Intro_GLS} exists for all times.
\end{theorem}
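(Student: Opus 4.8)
The plan is to use the explicit formula \eqref{T} for the collapse time $T$, established above in the radial symmetric setting for general $g>0$, and to verify that for the chosen power-type energy density this integral is infinite.

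First I would insert the computed scaling factor $g(c)=c^s$ into \eqref{T}. Since $g\!\left(\tfrac{m}{\alpha_d}z^{-d}\right)=\left(\tfrac{m}{\alpha_d}\right)^{s}z^{-ds}$, the formula turns into
\[
T=\frac{1}{d}\left(\frac{m}{\alpha_d}\right)^{-s}\int_0^{R_0} z^{\,1+ds}\,\mathrm{d}z .
\]
The integrand is continuous on $(0,R_0]$, so the only question is integrability at $z=0$, where it behaves like $z^{1+ds}$. The hypothesis $s<-\tfrac{2}{d}$ is equivalent to $ds<-2$, i.e.\ $1+ds<-1$; the exponent being strictly below $-1$, the integral diverges and hence $T=\infty$.

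It then remains only to note that $T=\infty$ indeed yields a globally defined solution, which is already built into the construction recalled above: $F\colon(0,\infty)\to\R$ is strictly decreasing with $\lim_{R\searrow 0}F(R)=T=\infty$ and $\lim_{R\nearrow\infty}F(R)<0$, so $F^{-1}$ is defined on all of $(0,\infty)$ and $R(t)\defr F^{-1}(t)>0$ solves $R'=f(R)$ for every $t>0$, with $R(t)\to 0$ as $t\to\infty$. Along this solution the concentration $c(t)=\tfrac{m}{\alpha_d}R(t)^{-d}$ stays positive (in fact tends to $\infty$), hence in the region where $G$ is smooth, so by the radial symmetric reduction carried out above the pair $(\Gamma(t),c(t))$ is a classical solution of \eqref{eq_Intro_GLS} on $[0,\infty)$. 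I do not expect a real obstacle here: everything collapses to the elementary fact that $\int_0^{R_0} z^{1+ds}\,\mathrm{d}z=\infty$ precisely when $s\le-\tfrac{2}{d}$, the borderline value being excluded by the strictness of the hypothesis; the only point worth a second look is the applicability of \eqref{T}, i.e.\ that $f$ is continuous and strictly negative on $(0,\infty)$ so that $F$ is well-defined off $z=0$, which holds since $g$ is continuous and, by the parabolicity condition, positive there.
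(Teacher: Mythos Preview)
Your proof is correct and follows essentially the same approach as the paper: both substitute $g(c)=c^s$ into \eqref{T} to obtain a constant times $\int_0^{R_0} z^{1+ds}\,\mathrm{d}z$ and then use $s<-\tfrac{2}{d}$ (equivalently $1+ds<-1$, or $sd+2<0$) to conclude that this integral diverges. Your additional paragraph spelling out why $T=\infty$ yields a global solution via $F^{-1}$ is more explicit than the paper, but merely restates facts already established in the surrounding discussion.
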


\begin{proof}
Calculation of the final time $T$ (cf. \eqref{T}) yields
\begin{align*}
T 
= \frac{1}{d} \int_0^{R_0} \frac{z}{g\left( \frac{m}{\alpha_d}z^{-d} \right)} \, \mathrm{d}z 
= C(m,d,s) \int_0^{R_0} z^{sd+1} \, \mathrm{d}z
= \frac{C(m,d,s)}{sd+2} \, z^{sd+2} \Big|_0^{R_0}
\end{align*}
with a positive constant $C(m,d,s)$ depending on the mass $m$, the dimension $d$ and the power variable $s$. Due to the choice $s < -\frac{2}{d} \Leftrightarrow sd+2<0$, this adds up to $T=\infty$.
\end{proof}

We have seen that, assuming the parabolicity condition $g>0$, a surface will always shrink and thus evolve very similar as for the usual mean curvature flow. But even a positive scaling factor $g>0$ can slow down this behavior enforced by the mean curvature flow and prevent the surface from collapsing, at least in finite time. The additional concentration thus does have an effect on the qualitative geometric evolution of the surface in our system \eqref{eq_Intro_GLS}.

\subsection{Conservation of Mean Convexity}\label{Chap_MeanConvex}

A solution of $\eqref{eq_Intro_GLS}$ conserves its mean convexity: If the initial hypersurface $\Gamma_0$ is mean convex, i.e., $H(0,\cdot) \geq 0$, then the evolving hypersurface remains mean convex for all further times, i.e., we have $H(t,\cdot) \geq 0$ for all $t>0$. 
To show this, we want to apply maximum principles to $w=-H$. We will observe later, see Remark \ref{MeanConvex_Lemma_skalierterMCF}, that maximum principles without (sign) conditions on the zeroth-order term are necessary to deduce the conservation of mean convexity. Hence, the typical weak maximum principles as in \cite[Proposition 3.1]{Ecker} and \cite[§7.1: Theorem 8 and Theorem 9]{Evans} (or rather their transfer to hypersurfaces) can not be used. Instead, we derived a suitable weak maximum principle in \cite[Section 2.4.1]{Buerger}. A strong maximum principle without assumptions on the zeroth-order term can be found in the literature for domains in $\R^d$ (see \cite[Theorem 4.26]{RenardyRogers}) and is transferred to hypersurfaces in \cite[Section 2.4.2]{Buerger}. These two maximum principles are combined in Proposition \ref{MaxPrinzip_Folg}.\\

We prove the conservation of mean convexity not only for hypersurfaces that evolve by the scaled mean curvature flow but also for more general evolving hypersurfaces that satisfy the following assumptions: 

\begin{ass}\label{ass_meanconvexity}
Let $\Gamma$ be a $(C^2$-$\phantom{.}C^2) \cap (C^1$-$\phantom{.}C^4)$-evolving immersed closed hypersurface with reference surface $M \subset \R^{d+1}$, normal $\nu$ and mean curvature $H$ 
that evolves with normal velocity $V=V(H)$. Furthermore, we assume
\begin{align*}
\Delta_\Gamma V + V \big| \nabla_\Gamma \nu \big|^2
= A \colon D_\Gamma^2 H + B \cdot \nabla_\Gamma H + CH \quad \text{ on } [0,T] \times M
\end{align*}
with continuous $A: [0,T] \times M \rightarrow \R^{(d+1) \times (d+1)}$, $B: [0,T] \times M \rightarrow \R^{d+1}$ and $C: [0,T] \times M \rightarrow \R$ such that $A$ is symmetric and positive definite on $[0,T] \times M$. 
\end{ass}
With Proposition \ref{mc_formulas}, we have 
\begin{align*}
\partial^\square H = \Delta_\Gamma V + V \big| \nabla_\Gamma \nu \big|^2.
\end{align*}
The regularity of the evolving hypersurface guarantees in particular that all occuring derivatives of the mean curvature are well-defined as Definition \ref{mc_def} implies
\begin{align*}
H \in C^1\big([0,T],C^0(M)\big) \cap C^0\big([0,T],C^2(M)\big).
\end{align*}
Under these conditions, mean convexity of the hypersurface is conserved. Even more, the mean curvature instantly turns strictly positive.

\begin{theorem}[Conservation of Mean Convexity]\label{MeanConvex_H} $\phantom{x}$ \\
We suppose Assumptions \ref{ass_meanconvexity} are valid with $H(0) \geq 0$ on $M$. Then, $H(t)>0$ holds on $M$ for all $t \in (0,T]$.
\end{theorem}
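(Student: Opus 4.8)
The plan is to apply the maximum principle from Proposition~\ref{MaxPrinzip_Folg} to the function $w \defr -H$. First I would verify that $w$ has the required regularity: by Definition~\ref{mc_def} and the stated regularity of the evolving hypersurface in Assumptions~\ref{ass_meanconvexity}, we have $H \in C^1\big([0,T],C^0(M)\big) \cap C^0\big([0,T],C^2(M)\big)$, and the same holds for $w = -H$. Next, using $\partial^\square H = \Delta_\Gamma V + V |\nabla_\Gamma \nu|^2$ (from Proposition~\ref{mc_formulas}) together with the structural assumption in Assumptions~\ref{ass_meanconvexity}, I compute
\begin{align*}
\partial^\square H = A \colon D_\Gamma^2 H + B \cdot \nabla_\Gamma H + CH,
\end{align*}
so that $w = -H$ satisfies
\begin{align*}
\mathcal{L}w \defr -\partial^\square w + A \colon D_\Gamma^2 w + B \cdot \nabla_\Gamma w + Cw = \partial^\square H - A \colon D_\Gamma^2 H - B \cdot \nabla_\Gamma H - CH = 0 \geq 0
\end{align*}
on $[0,T] \times M$, with $A$ symmetric and positive definite as required by Assumptions~\ref{ass_maxprinzip}.

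The hypothesis $H(0) \geq 0$ translates to $w(0,\cdot) \leq 0$ on $M$, so Proposition~\ref{MaxPrinzip_Folg} applies and yields $w(t,\cdot) \leq 0$ for all $t \in [0,T]$, i.e.\ $H(t,\cdot) \geq 0$, together with a time $t_0 \in [0,T]$ such that $w(t,\cdot) \equiv 0$ for $t \in (0,t_0]$ and $w(t,\cdot) < 0$ for $t \in (t_0,T]$. To conclude the strict positivity $H(t) > 0$ on $M$ for \emph{all} $t \in (0,T]$, the remaining task is to rule out the possibility $t_0 > 0$, that is, to exclude that $H \equiv 0$ on some initial time interval $(0,t_0]$. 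Here I would use the geometric fact recorded in Proposition~\ref{mc_zero}: a closed hypersurface cannot have vanishing mean curvature everywhere. Hence $H(t,\cdot) \equiv 0$ is impossible for any single time $t$, which forces $t_0 = 0$, and therefore $w(t,\cdot) < 0$, i.e.\ $H(t,\cdot) > 0$, for all $t \in (0,T]$.

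The main obstacle is essentially bookkeeping rather than a genuine difficulty: one must make sure the structural identity in Assumptions~\ref{ass_meanconvexity} is exactly what converts $\partial^\square H$ into the operator form needed by the maximum principle, and that the regularity classes line up so that $D_\Gamma^2 H$ makes sense pointwise and continuously in time — this is why Assumptions~\ref{ass_meanconvexity} demands the stronger $(C^2\text{-}C^2) \cap (C^1\text{-}C^4)$ regularity, guaranteeing $H$ is twice continuously differentiable in space. The only conceptually substantive point is the appeal to Proposition~\ref{mc_zero} to upgrade non-negativity to strict positivity; without a maximum principle that is valid \emph{without a sign condition on the zeroth-order coefficient $C$}, this argument would fail, which is precisely why the authors invested in the version stated as Proposition~\ref{MaxPrinzip_Folg}. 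I would close by noting (as foreshadowed in Remark~\ref{MeanConvex_Lemma_skalierterMCF}) that the scaled mean curvature flow $V = g(c)H$ indeed falls into the framework of Assumptions~\ref{ass_meanconvexity}, so Theorem~\ref{MeanConvex_H} yields conservation of mean convexity for solutions of \eqref{eq_Intro_GLS} as a special case.
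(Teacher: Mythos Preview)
Your proposal is correct and follows essentially the same approach as the paper's own proof: set $w=-H$, verify $\mathcal{L}w=0$ via $\partial^\square H = \Delta_\Gamma V + V|\nabla_\Gamma\nu|^2$ and the structural identity in Assumptions~\ref{ass_meanconvexity}, apply Proposition~\ref{MaxPrinzip_Folg}, and then exclude $t_0>0$ using Proposition~\ref{mc_zero}. Your additional commentary on regularity and on the role of the sign-condition-free maximum principle accurately reflects the paper's own remarks surrounding the theorem.
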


\begin{proof}
We define $w \defr -H$ as well as 
\begin{align*}
\mathcal{L}w \defr -\partial^\square w + A \colon D_\Gamma^2 w + B \cdot \nabla_\Gamma w + Cw
\end{align*}
so that Assumptions \ref{ass_maxprinzip} are satisfied. We have 
\begin{align*}
\mathcal{L}w 
= -\mathcal{L}H
= \partial^\square H - \big( \Delta_\Gamma V + V\big| \nabla_\Gamma \nu \big|^2 \big)
= 0 \quad \text{ on } \, [0,T] \times M
\end{align*}
and $w(0) = -H(0) \leq 0$ on $M$. With Proposition \ref{MaxPrinzip_Folg},  $H(t) = -w(t) \geq 0$ follows on $M$ for all $t \in [0,T]$ and there exists $t_0 \in [0,T]$ with 
\begin{align*}
H(t) = -w(t) = 0 &\quad \text{ on } M \text{ for all } t \in (0,t_0] \text{ and } \\
H(t) = -w(t) > 0 &\quad \text{ on } M \text{ for all } t \in (t_0,T].
\end{align*}
As $\Gamma(t) = \theta_t(M)$ is a closed hypersurface, $H(t) = 0$ cannot hold on the whole surface $M$ (see Proposition \ref{mc_zero}). Hence, we have $t_0=0$ and then $H(t)>0$ follows on $M$ for all $t \in (0,T]$. 
\end{proof}

In the following remark, we state that the scaled mean curvature flow $V=g(c)H$ satisfies Assumptions \ref{ass_meanconvexity}. In particular, it thus conserves mean convexity and any initially mean convex hypersurface turns strictly mean convex instantly, i.e., $H(t,\cdot) > 0$ for any $t>0$. 

\begin{remark}\label{MeanConvex_Lemma_skalierterMCF} 
Let $\Gamma$ be a $(C^2$-$\phantom{.}C^2) \cap (C^1$-$\phantom{.}C^4)$-evolving immersed closed hypersurface with reference surface $M \subset \R^{d+1}$, normal $\nu$ and mean curvature $H$ that evolves with normal velocity
\begin{align*}
V = g(c) H,
\end{align*}
where $g \in C^2(\R)$ with $g>0$ and $c \in C^0\big([0,T],C^2(M)\big)$. (Notably, this is fulfilled for the usual mean curvature flow with $g \equiv 1$.) Then, Assumptions \ref{ass_meanconvexity} are satisfied. In particular, we have
\begin{align*}
&\Delta_\Gamma V + V \big| \nabla_\Gamma \nu \big|^2 
= A \colon D_\Gamma^2 H + B \cdot \nabla_\Gamma H + CH
\end{align*}
with 
\begin{align*}
A &\defr g(c) \Id, \phantom{xxx}
B \defr 2g'(c) \nabla_\Gamma c \phantom{xxx} \text{and} \\
C &\defr g(c)\big| \nabla_\Gamma \nu \big|^2 + g'(c)\Delta_\Gamma c + g''(c) |\nabla_\Gamma c|^2.
\end{align*}
The assumptions and $\nu \in C^0\big([0,T],C^1(M,\R^{d+1})\big)$ by \cite[Proposition 2.51]{Buerger} imply continuity of $A, B$ and $C$ on $[0,T] \times M$. Moreover, $A$ is clearly symmetric and as $g>0$ holds also positive definite on $[0,T] \times M$. 
\end{remark}

\subsection{Non-Conservation of Convexity}\label{Chap_Convex}

Huisken showed in \cite[Theorem 4.3]{Huisken} that, in addition to mean convexity, the usual mean curvature flow also conserves convexity. This result cannot be transferred to the scaled mean curvature flow: In contrast to the usual mean curvature flow where we have $\partial^\square H = \Delta_\Gamma H + \big| \nabla_\Gamma \nu\big|^2 H$, first order derivatives of $H$ occur in
\begin{align*}
\partial^\square H = g(c) \Delta_\Gamma H + 2 \nabla_\Gamma g(c) \cdot \nabla_\Gamma H + \big(\Delta_\Gamma g(c) + \big| \nabla_\Gamma \nu\big|^2 g(c) \big) H
\end{align*}
for the scaled mean curvature flow. In the evolution equation
\begin{align*}
\partial_t h_{ij} = g \Delta h_{ij} + \sum_{k,l} g^{kl} \nabla_i g \nabla_j h_{kl} + \text{ lower order terms }
\end{align*}
for the second fundamental form $[h_{ij}]$ (cf. \cite[Theorem 3.4]{Huisken} for the usual mean curvature flow), they produce additional first order terms in various directions. Therefore, Hamilton's maximum principle (\cite[Theorem 4.1]{Huisken}) cannot be applied to the second fundamental form and thus we cannot conclude the conservation of convexity as in \cite[Theorem 4.3]{Huisken}. \\
Actually, it is possible to construct examples of closed hypersurfaces evolving with the scaled mean curvature flow that loose their convexity in the course of time and we will do so in the following. Obviously, the dimension of the surface has to fulfill $d>1$, as otherwise convexity and mean convexity coincide and the latter is conserved by Section \ref{Chap_MeanConvex}. We choose $d=2$ and consider a rotationally symmetric structure for the hypersurface and the concentration defined thereon. However, this rotationally symmetric setting can be constructed in dimensions $d>2$ analogously. The idea of the construction is a hypersurface shaped as a long cylinder, whose first principal curvature on the sides is $0$ but the second is positive. Together, the surface thus has positive mean curvature and will shrink under the scaled mean curvature flow. A clever choice of the concentration forces the cylinder to shrink faster in the middle than at the ends, which turns the first principal curvature negative and therefore makes the surface non-convex. \\
We now construct this example explicitly. Fix an energy density $G \in C^{7}(\R)$ with $G''>0$ and $g \defr G - G' \cdot \Id > 0$. Let $1 \leq x_0 < x_1 < x_2 < x_3 < x_4 < x_5 \in \R$. We define the function $w_0: [x_0-1,x_5+1] \rightarrow \R_{\geq 0}$ with
\begin{align*}
w_0(x) \defr 
\begin{cases}
\big( 1-(x_0-x)^6 \big)^{1/6}, &\text{ if } x \in [x_0-1,x_0], \\
1, &\text{ if } x \in [x_0,x_5], \\
\big( 1-(x-x_5)^6 \big)^{1/6}, &\text{ if } x \in [x_5,x_5+1].
\end{cases}
\end{align*}
In particular, $w_0 \in C^5\big((x_0-1,x_5+1)\big)$ holds. Moreover, we define the surface of revolution
\begin{align*}
\Sigma \defr \Bigg\{ \gamma(x,\varphi) \defr 
\begin{bmatrix}   
x \\
w_0(x) \cos \varphi\\
w_0(x) \sin \varphi
\end{bmatrix}
\, \Bigg| \, x \in [x_0-1,x_5+1], \varphi \in [0,2\pi] \Bigg\}.
\end{align*}
This $\Sigma$ will be the reference surface as well as the initial surface. Furthermore, we choose $c_0 \in C^\infty\big([x_0-1,x_5+1]\big)$ with $c_{0,I} \leq c_0(x) \leq c_{0,O}$ for all $x \in [x_0-1,x_5+1]$ and
\begin{align*}
c_0(x) = 
\begin{cases}
c_{0,O}, &\text{ if } x \in [x_0-1,x_1], \\
c_{0,I}, &\text{ if } x \in [x_2,x_3], \\
c_{0,O}, &\text{ if } x \in [x_4,x_5+1]
\end{cases}
\end{align*}
for constant values $0<c_{0,I}<c_{0,O}$. As $c_0>0$ and thus $g'(c_0) = -G''(c_0)c_0 < 0$, also $g(c_0): [x_0-1,x_5+1] \rightarrow \R_{>0}$ holds with $g_I \defr g(c_{0,I}) \geq g(c_0)(x) \geq g(c_{0,O}) \defl g_O$ for all $x \in [x_0-1,x_5+1]$ and 
\begin{align*}
g(c_0)(x) = 
\begin{cases}
g_O, &\text{ if } x \in [x_0-1,x_1], \\
g_I, &\text{ if } x \in [x_2,x_3], \\
g_O, &\text{ if } x \in [x_4,x_5+1],
\end{cases}
\end{align*}
with $g_I > g_O > 0$. Finally, we define the rotationally symmetric function $u_0: \Sigma \rightarrow \R$ with 
\begin{align*}
u_0\big( \gamma(x,\varphi) \big) \defr c_0(x)
\end{align*}
for all $x \in [x_0-1,x_5+1]$ and $\varphi \in [0,2\pi]$. An illustration of the initial data $w_0$ and $g(c_0)$ can be found in Figure \ref{figure_w0,gc0}.

\begin{figure}[h!]
\begin{minipage}[c]{0.57\textwidth}
\begin{tikzpicture}[domain=-1:3]
\draw[->, dotted] (-0.7,0) -- (7.5,0);
\node[below] at (7.3,0) {$x$};
\foreach \x in {0,...,5}
  {\draw (\x+1,0.15) -- (\x+1,-0.15) node[below, fill=white, inner sep=2pt] {$x_{\x}$};}
\draw[->, dotted] (-0.3,-0.4) -- (-0.3,1.5);
\node[right] at (-0.3,1.4) {$w_0(x)$};
\draw[thick,domain=0:1,samples=500] plot(\x,{(abs(1-(1-\x)^3))^(1/3)});
\draw[thick,domain=1:6] plot(\x,1);
\draw[thick,domain=6:7,samples=500] plot(\x,{(abs(1-(abs(6-\x))^3))^(1/3)});
\draw[thick] (7,0) -- (7,0.2);
\end{tikzpicture} \\
$\phantom{.}$\\
\begin{tikzpicture}[domain=-1:3]
\draw[->, dotted] (-0.7,0) -- (7.5,0);
\node[below] at (7.3,0) {$x$};
\foreach \x in {0,...,5}
  {\draw (\x+1,0.15) -- (\x+1,-0.15) node[below, fill=white, inner sep=2pt] {$x_{\x}$};}
\draw[->, dotted] (-0.3,-0.4) -- (-0.3,1.5);
\node[right] at (-0.3,1.4) {$g\big(c_0(x)\big)$};
\draw[thick,domain=0:2] plot(\x,0.5);
\draw[thick,domain=2:3,samples=500] plot(\x,{1 + 5*(\x-3)^3 + 7.5*(\x-3)^4 + 3*(\x-3)^5});
\draw[thick,domain=3:4] plot(\x,1);
\draw[thick,domain=4:5,samples=500] plot(\x,{1 + 5*(4-\x)^3 + 7.5*(4-\x)^4 + 3*(4-\x)^5}); 
\draw[thick,domain=5:7] plot(\x,0.5); 
\end{tikzpicture}
\end{minipage}\hfill
\begin{minipage}{0.4\textwidth}
\vspace{\baselineskip}
\caption{Plot of the initial data. The function $w_0$ generates the initial surface $\Gamma_0$ via revolution, which is shaped like a long cylinder with smoothly closed endings. The initial concentration $c_0$ is chosen such that scaling the mean curvature flow with $g(c_0)$ increases the velocity of the flow in the middle of the cylinder compared to the sides.} \label{figure_w0,gc0}
\end{minipage}
\end{figure}
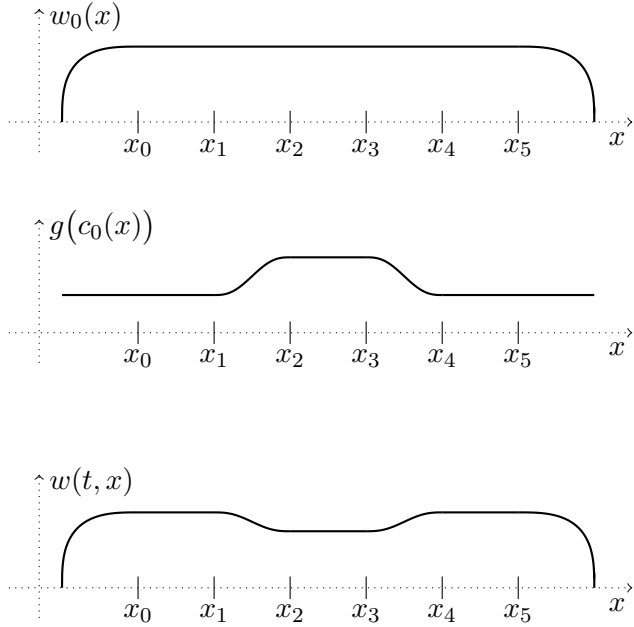

\begin{figure}[h!]
\begin{minipage}[c]{0.57\textwidth}
\begin{tikzpicture}[domain=-1:3]
\draw[->, dotted] (-0.7,0) -- (7.5,0);
\node[below] at (7.3,0) {$x$};
\foreach \x in {0,...,5}
  {\draw (\x+1,0.15) -- (\x+1,-0.15) node[below, fill=white, inner sep=2pt] {$x_{\x}$};}
\draw[->, dotted] (-0.3,-0.4) -- (-0.3,1.5);
\node[right] at (-0.3,1.4) {$w(t,x)$};
\draw[thick,domain=0:1,samples=500] plot(\x,{(abs(1-(1-\x)^3))^(1/3)});
\draw[thick,domain=1:2] plot(\x,1);
\draw[thick,domain=2:3,samples=500] plot(\x,{1 + 2.5*(2-\x)^3 + 3.75*(2-\x)^4 + 1.5*(2-\x)^5});
\draw[thick,domain=3:4] plot(\x,0.75);
\draw[thick,domain=4:5,samples=500] plot(\x,{1 + 2.5*(\x-5)^3 + 3.75*(\x-5)^4 + 1.5*(\x-5)^5});
\draw[thick,domain=5:6] plot(\x,1);
\draw[thick,domain=6:7,samples=500] plot(\x,{(abs(1-(abs(6-\x))^3))^(1/3)});
\draw[thick] (7,0) -- (7,0.2);
\end{tikzpicture}
\end{minipage}\hfill
\begin{minipage}{0.4\textwidth}
\vspace{\baselineskip}
\caption{A possible shape of the function $w$ at a time $t>0$. The generated surface of revolution obviously is not convex.} \label{figure_w}
\end{minipage}
\end{figure}

\begin{lemma}\label{meanconvex_sigma}
Let the surface $\Sigma$ and the functions $w_0$ and $u_0$ be as above. Then, $\Sigma$ is a $C^5$-embedded closed and convex hypersurface of $\R^3$ and we have $u_0 \in C^4(\Sigma)$.
\end{lemma}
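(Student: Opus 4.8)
The plan is to realise $\Sigma$ as the boundary of the solid of revolution
\begin{align*}
S \defr \Big\{ (x,y,z) \in \R^3 \ :\ x \in [x_0-1,x_5+1],\ \ \sqrt{y^2+z^2} \le w_0(x) \Big\},
\end{align*}
and then to check four things in turn: the regularity and concavity of the profile $w_0$; that $S$ is a convex body, so $\Sigma=\partial S$ is convex, closed and orientable; that $\Sigma$ is in fact a $C^5$-embedded submanifold; and that $u_0\in C^4(\Sigma)$. For the first point, on each of the three open subintervals $w_0$ is real-analytic (on the caps it is $(1-(\cdot)^6)^{1/6}$ precomposed with an affine map, analytic wherever the base stays positive), and at the junctions $x_0$ and $x_5$ the expansion $w_0(x)-1 = -\tfrac16(x_0-x)^6 + O((x_0-x)^{12})$ shows that $w_0$ agrees with the constant $1$ to fifth order, so $w_0\in C^5((x_0-1,x_5+1))$ as already noted. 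Moreover $w_0$ is concave on $[x_0-1,x_5+1]$: it is constant on $[x_0,x_5]$; on each cap it is, up to an affine change of variable, the function $s\mapsto(1-s^6)^{1/6}$ on $[0,1]$, which is concave because its subgraph equals the quadrant $\{s^6+r^6\le 1,\ s,r\ge 0\}$ of the $\ell^6$-unit ball in $\R^2$, a convex set; and since $w_0$ is $C^1$ with vanishing derivative at $x_0$ and $x_5$, concavity carries across the junctions.

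Next, for any $P_i=(x_i,y_i,z_i)\in S$ and $t\in[0,1]$ one gets, by the triangle inequality in $\R^2$ followed by the concavity of $w_0$,
\begin{align*}
\sqrt{y(t)^2+z(t)^2} \le (1-t)\sqrt{y_1^2+z_1^2}+t\sqrt{y_2^2+z_2^2} \le (1-t)w_0(x_1)+t\,w_0(x_2) \le w_0\big((1-t)x_1+t x_2\big),
\end{align*}
so the segment stays in $S$ and $S$ is convex. Since $w_0>0$ on $(x_0-1,x_5+1)$, $S$ is compact with non-empty interior and $\partial S=\Sigma$, so $\Sigma$ is a connected, compact topological $2$-sphere carrying the continuous outer unit normal of the convex body $S$; this already yields conditions (ii) and (iii) of Definition \ref{hypersurface_Def} together with the convexity, while condition (i) — that $\Sigma$ be a $C^5$-embedded submanifold — is the content of the next step.

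For the $C^5$-embedded submanifold structure I would exhibit local $C^5$ charts. At a point $\gamma(x,\varphi)$ with $x\in(x_0-1,x_5+1)$, where $w_0(x)>0$, the standard surface-of-revolution map $(x,\varphi)\mapsto\gamma(x,\varphi)$ is a local $C^5$-embedding because $w_0\in C^5$ and is positive there. This parametrisation degenerates at the two poles (where $w_0'\to\infty$), so I would switch to $(y,z)$ as coordinates near them: on the cap $x_0-x=(1-w_0^6)^{1/6}$, hence $x=x_0-\psi(y^2+z^2)$ with $\psi(s)\defr(1-s^3)^{1/6}$ real-analytic near $s=0$, so near the pole $\Sigma$ is the $C^\infty$ graph $x=x_0-\psi(y^2+z^2)$ over a small disc, and symmetrically at the other pole. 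Combining the charts, $\Sigma$ is $C^5$-embedded (and exactly $C^5$, the sixth derivative of $w_0$ jumping at $x_0$ and $x_5$). This regularity at the tips is the step I expect to require genuine care: one has to observe that the exponent $6$ is chosen precisely so that, after passing to $(y,z)$, the profile relation becomes a function of $y^2+z^2$ through the analytic map $\psi$, which is what keeps the surface smooth in spite of the vertical tangents of the meridian at the poles.

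Finally, $u_0=c_0\circ(x|_\Sigma)$, where $x|_\Sigma\in C^5(\Sigma)$ is the restriction to the $C^5$-submanifold $\Sigma$ of the smooth ambient coordinate $(x,y,z)\mapsto x$, and $c_0\in C^\infty([x_0-1,x_5+1])$; hence $u_0\in C^5(\Sigma)\subseteq C^4(\Sigma)$ (near the poles this is anyway immediate, since $c_0\equiv c_{0,O}$ there makes $u_0$ locally constant). This completes the plan.
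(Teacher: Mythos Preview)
Your proof is correct. The core regularity argument at the two poles is essentially the same as the paper's: where you write the cap as a graph $x = x_0 - \psi(y^2+z^2)$ with $\psi(s)=(1-s^3)^{1/6}$ analytic near $s=0$, the paper instead realises the same cap as the regular level set $\Phi^{-1}(1)$ of $\Phi(x,y,z)=(x-x_5)^6+(y^2+z^2)^3$; these are two packagings of the same observation that the exponent $6$ turns the $\ell^6$-sphere relation into a smooth function of $y^2+z^2$. The genuine addition in your write-up is the convexity argument: the paper simply asserts that $\Sigma$ is ``clearly'' convex, whereas you supply the natural proof via concavity of the profile $w_0$ (using that the subgraph of $s\mapsto(1-s^6)^{1/6}$ is a quadrant of the convex $\ell^6$-ball) and convexity of the solid of revolution $S$. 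Your treatment of $u_0\in C^4(\Sigma)$ via $u_0=c_0\circ(x|_\Sigma)$ with $x|_\Sigma\in C^5(\Sigma)$ is also a touch more explicit than the paper's bare assertion.
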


\begin{proof}
Define the auxiliary function
\begin{align*}
\Phi(x,y,z) \defr (x-x_5)^6 + (y^2+z^2)^{6/2}
\end{align*}
for $[x,y,z] \in (x_5,\infty) \times \R^2$. Then, $\Phi \in C^\infty\big((x_5,\infty) \times \R^2 \big)$ holds and $1$ is a regular value of this function $\Phi$. Hence, 
\begin{align*}
\Phi^{-1}(1) 
&= \big\{ [x,y,z] \in (x_5,\infty) \times \R^2 \, \big| \, (y^2-z^2)^{6/2} = 1 - (x-x_5)^6 \big\} \\
&= \big\{ [x,y,z] \in (x_5,x_5+1] \times \R^2 \, \big| \, y^2+z^2 = \big( 1-(x-x_5)^6 \big)^{2/6} = w_0(x)^2 \big\} \\
&= \Bigg\{ 
\begin{bmatrix}   
x \\
w_0(x) \cos \varphi\\
w_0(x) \sin \varphi
\end{bmatrix}
\, \Bigg| \, x \in (x_5,x_5+1], \varphi \in [0,2\pi] \Bigg\}
\end{align*}
is a $2$-dimensional $C^\infty$-embedded submanifold of $\R^3$ (see e.g. \cite[Section 2.2, Proposition 2]{doCarmo}). In particular, the ``spherical shells'' at the ends of $\Sigma$ are $C^\infty$-embedded submanifolds. As $w_0$ and thus also $\Sigma$ are only of regularity $C^5$ in the ``glueing points'' $x_0$ and $x_5$, overall, $\Sigma$ is a $2$-dimensional $C^5$-embedded submanifold of $\R^3$. Clearly, $\Sigma$ also is convex, compact, connected and orientable, such that $\Sigma$ is a $C^5$-embedded closed and convex hypersurface of $\R^3$, and we have $u_0 \in C^4(\Sigma)$.
\end{proof}

Before we show that the evolution of the convex initial surface $\Gamma_0 \defr \Sigma$ turns non-convex over time, as an auxiliary step, we state formulas for the mean curvature and the normal velocity of surfaces of revolution, whose proofs can be found in \cite[Lemmas 5.5 and 5.6]{Buerger}.  

\begin{lemma}[Mean Curvature for Surfaces of Revolution]\label{NKonv_H} $\phantom{x}$ \\
Let $a,b \in \R$ with $a<b$. Furthermore, let $w \in C^2([a,b])$ with $w(a)=w(b)=0$ and $w>0$ on $(a,b)$ such that the surface of revolution 
\begin{align*}
\Gamma \defr \Bigg\{ \gamma(x,\varphi) \defr 
\begin{bmatrix}   
x \\
w(x) \cos \varphi\\
w(x) \sin \varphi
\end{bmatrix}
\, \Bigg| \, x \in [a,b], \varphi \in [0,2\pi] \Bigg\}
\end{align*}
is a $C^2$-embedded closed hypersurface in $\R^3$. Its mean curvature in every point $\gamma(x,\varphi)$ with $x \in (a,b)$ and $\varphi \in [0,2\pi]$ is given by
\begin{align*}
H_{|(x,\varphi)}
= \frac{1}{\sqrt{1+|w'(x)|^2}} \left( \frac{w''(x)}{1+|w'(x)|^2} - \frac{1}{w(x)} \right).
\end{align*}
\end{lemma}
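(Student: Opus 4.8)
The plan is to derive the mean curvature of a surface of revolution by a direct computation using the standard parametrization $\gamma(x,\varphi) = (x, w(x)\cos\varphi, w(x)\sin\varphi)$, and then read off the result from the coefficients of the first and second fundamental forms. First I would compute the tangent vectors $\gamma_x = (1, w'\cos\varphi, w'\sin\varphi)$ and $\gamma_\varphi = (0, -w\sin\varphi, w\cos\varphi)$, giving the coefficients of the first fundamental form $E = 1 + |w'|^2$, $F = \gamma_x\cdot\gamma_\varphi = 0$, and $G = w^2$. The vanishing of $F$ means the parameter lines are lines of curvature, which simplifies everything. Next I would compute the outer unit normal: up to sign it is proportional to $\gamma_x \times \gamma_\varphi = (w w', -w\cos\varphi, -w\sin\varphi)$, so after normalizing by $w\sqrt{1+|w'|^2}$ one gets $\nu = \frac{1}{\sqrt{1+|w'|^2}}(w', -\cos\varphi, -\sin\varphi)$. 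Here I must be careful about the orientation convention of the paper: Definition~\ref{mc_def} uses $H = -\Div_\Sigma\nu$ with the \emph{outer} normal, so I should check that the normal above points \emph{outward} from the solid of revolution (it points toward the axis in the $(y,z)$-components, hence is the inner normal, so I would flip its sign, or equivalently track the sign carefully to land on the stated formula).

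The core computation is then the second fundamental form. I would compute $\gamma_{xx} = (0, w''\cos\varphi, w''\sin\varphi)$ and $\gamma_{\varphi\varphi} = (0, -w\cos\varphi, -w\sin\varphi)$, and $\gamma_{x\varphi} = (0, -w'\sin\varphi, w'\cos\varphi)$. Dotting with the (correctly oriented outer) normal $\nu = \frac{1}{\sqrt{1+|w'|^2}}(-w', \cos\varphi, \sin\varphi)$ gives $e = \gamma_{xx}\cdot\nu = \frac{w''}{\sqrt{1+|w'|^2}}$, $f = \gamma_{x\varphi}\cdot\nu = 0$, and $g = \gamma_{\varphi\varphi}\cdot\nu = \frac{-w}{\sqrt{1+|w'|^2}}$. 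Since $F = f = 0$, the principal curvatures are simply $\kappa_1 = e/E = \frac{w''}{(1+|w'|^2)^{3/2}}$ and $\kappa_2 = g/G = \frac{-1}{w\sqrt{1+|w'|^2}}$, and the mean curvature (sum of principal curvatures in this convention, consistent with $H = -\Div_\Sigma\nu$) is
\begin{align*}
H = \kappa_1 + \kappa_2 = \frac{1}{\sqrt{1+|w'(x)|^2}}\left(\frac{w''(x)}{1+|w'(x)|^2} - \frac{1}{w(x)}\right),
\end{align*}
which is exactly the claimed formula, valid at every interior point $\gamma(x,\varphi)$ with $x\in(a,b)$ where $w(x)>0$ guarantees the parametrization is an immersion.

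The main obstacle, and really the only subtle point, is the bookkeeping of signs: the paper's sign convention assigns negative mean curvature to convex surfaces (e.g. a sphere of radius $R$ has $H = -d/R$), so one must consistently use the outer unit normal throughout and be careful that $H = -\Div_\Sigma\nu$ equals the \emph{sum} rather than the average of principal curvatures with the orientation-induced signs. A quick consistency check against the round sphere (where $w(x) = \sqrt{R^2 - x^2}$ on $[-R,R]$, giving $w' = -x/w$, $w'' = -R^2/w^3$, $1+|w'|^2 = R^2/w^2$, and hence $H = -2/R$) confirms the formula and the sign, after which the general derivation follows by the routine differential-geometric computation sketched above. Since the detailed calculation is standard, I would state it concisely and refer to \cite[Lemmas 5.5 and 5.6]{Buerger} for the full details, as the authors do.
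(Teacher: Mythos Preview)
Your proposal is correct: the computation via first and second fundamental forms is standard, the sign bookkeeping is handled properly (your sphere check confirms the convention $H=-2/R$), and the resulting formula matches the statement. Since the paper itself does not prove this lemma but merely cites \cite[Lemma~5.5]{Buerger}, your explicit derivation is precisely the kind of argument one expects behind that reference, and there is nothing to compare against in the paper proper.
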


\begin{lemma}[Normal Velocity for Surfaces of Revolution]\label{NKonv_V} $\phantom{x}$ \\
Let $T>0$ and $a,b \in C^0([0,T])$ with $a(t) < b(t)$ for all $t \in [0,T]$. Moreover, for every $t \in [0,T]$, let $w(t) \in C^2\big([a(t),b(t)]\big)$ with $\partial_t w(t) \in C^0\big( (a(t),b(t)) \big)$ if $t \in (0,T)$ and $w\big(t,a(t)\big) = w\big(t,b(t)\big)=0$ as well as $w(t)>0$ on $\big(a(t),b(t)\big)$ such that the surface of revolution
\begin{align*}
\Gamma \defr \Bigg\{ \{t\} \times \gamma(t,x,\varphi) \defr \{t\} \times
\begin{bmatrix}   
x \\
w(t,x) \cos \varphi\\
w(t,x) \sin \varphi
\end{bmatrix}
\, \Bigg| \, t \in [0,T], x \in \big[a(t),b(t)\big], \varphi \in [0,2\pi] \Bigg\}
\end{align*}
is a $C^1$-$\phantom{.}C^2$-evolving embedded closed hypersurface with $\Gamma(t) \subset \R^3$. Its normal velocity in every point $\big(t,\gamma(t,x,\varphi)\big)$ with $t \in (0,T)$, $x \in \big(a(t),b(t)\big)$ and $\varphi \in [0,2\pi]$ is given by
\begin{align*}
V_{|(t,x,\varphi)} 
= \frac{1}{\sqrt{1 + |\partial_x w(t,x)|^2}} \partial_t w(t,x). 
\end{align*}
\end{lemma}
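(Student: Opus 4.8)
The plan is to compute the normal velocity $V$ directly from the explicit surface-of-revolution parameterization $\gamma$, exploiting that $V=V^{\mathrm{tot}}\cdot\nu$ is independent of the choice of global parameterization (cf.\ Definition \ref{velocity_Def} and the invariance statement recorded after it). The one point that needs care is that $\gamma(t,\cdot,\cdot)$ lives on the time-dependent domain $[a(t),b(t)]\times[0,2\pi]$ and is thus not literally a parameterization over a fixed reference surface, so one cannot simply set $V^{\mathrm{tot}}=\partial_t\gamma$ with the base point frozen; handling this moving domain cleanly is the only nontrivial step.

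To deal with it, I would first note the general fact that if $c$ is any $C^1$ curve with $c(t)\in\Gamma(t)$ for $t$ near $t_0$, then $c'(t_0)\cdot\nu=V$ at $c(t_0)$: writing $c(t)=\theta(t,p(t))$ with a global parameterization $\theta$ of $\Gamma$ and a $C^1$ curve $p$ on the reference surface $M$ (possible locally near the given, embedded point), the chain rule gives $c'(t_0)=\partial_t\theta(t_0,p(t_0))+\mathrm{d}_{p(t_0)}\theta_{t_0}\big(p'(t_0)\big)$, whose second summand lies in $T_{c(t_0)}\Gamma(t_0)$ and hence is orthogonal to $\nu$. Fixing $t_0\in(0,T)$, $x_0\in(a(t_0),b(t_0))$ and $\varphi_0\in[0,2\pi]$, continuity of $a,b$ ensures $x_0\in(a(t),b(t))$ for $t$ near $t_0$, so the fibre curve $\eta(t)\defr\gamma(t,x_0,\varphi_0)$ qualifies and $V=\eta'(t_0)\cdot\nu$ at $\gamma(t_0,x_0,\varphi_0)$.

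It then remains to carry out the standard differential geometry at that point. Abbreviating $w=w(t_0,x_0)>0$ and $w'=\partial_x w(t_0,x_0)$, the coordinate tangent vectors are $\gamma_x=(1,w'\cos\varphi_0,w'\sin\varphi_0)$ and $\gamma_\varphi=(0,-w\sin\varphi_0,w\cos\varphi_0)$, so $\gamma_x\times\gamma_\varphi=w\,(w',-\cos\varphi_0,-\sin\varphi_0)$ has length $w\sqrt{1+|w'|^2}$; the unit normal pointing away from the axis of revolution, which is the outer normal in this paper's sign convention (as one checks on a cylinder, where $w'\equiv 0$, against Lemma \ref{NKonv_H}), is consequently
\begin{align*}
\nu=\frac{1}{\sqrt{1+|w'|^2}}\,\big(-w',\,\cos\varphi_0,\,\sin\varphi_0\big).
\end{align*}
Since $\eta'(t_0)=\partial_t w(t_0,x_0)\,(0,\cos\varphi_0,\sin\varphi_0)$, contracting with $\nu$ and using $\cos^2\varphi_0+\sin^2\varphi_0=1$ gives $V=\partial_t w(t_0,x_0)\big/\sqrt{1+|w'|^2}$, as claimed. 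An alternative route, giving the same answer in one line but relying on a level-set formula for the normal velocity, is to write $\Gamma(t)$ locally as the zero set of $F(t,x,y,z)=\sqrt{y^2+z^2}-w(t,x)$ and use $V=-\partial_t F/|\nabla F|$. I expect the only real obstacle to be the reduction in the second paragraph; once the moving domain $[a(t),b(t)]$ is disposed of, everything else is a short computation.
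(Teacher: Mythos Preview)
Your proof is correct: the reduction via the fibre curve $\eta(t)=\gamma(t,x_0,\varphi_0)$ cleanly handles the moving domain, and the subsequent computation of $\nu$ and of $\eta'(t_0)\cdot\nu$ is accurate, including the sign check against Lemma~\ref{NKonv_H}. The paper itself does not supply a proof for this lemma but defers to \cite[Lemma 5.6]{Buerger}; your direct computation is the standard approach and is what one expects to find there.
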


\begin{theorem}[Non-Conservation of Convexity]\label{NKonv_Thm} $\phantom{x}$ \\
Let the energy density $G$, the surface $\Sigma$ and the function $u_0$ be as above. The initial hypersurface $\Gamma_0 \defr \Sigma$ is a convex surface whose evolution under \eqref{eq_Intro_GLS} with initial concentration $u_0$ does not stay convex. 
\end{theorem}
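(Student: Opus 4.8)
The plan is to show that the rotationally symmetric solution provided by Theorem~\ref{lokEx_param} stays rotationally symmetric, so that the evolving surface remains a surface of revolution described by a height profile $w(t,x)$, and then to track the sign of the first principal curvature in the ``neck'' region using the explicit formulas from Lemmas~\ref{NKonv_H} and~\ref{NKonv_V} together with a short-time estimate on how much $w$ has moved.

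\textbf{Step 1: existence and symmetry.} First I would apply Theorem~\ref{lokEx_param} with the reference/initial surface $\Sigma$ and initial concentration $u_0$ constructed above (both of the required regularity by Lemma~\ref{meanconvex_sigma}) to obtain a solution $(\rho,u)$, hence a solution $(\Gamma,c)$ of \eqref{eq_Intro_GLS}, on some short time interval $[0,T]$. Because all data are invariant under rotations about the $x$-axis, and because the solution of the parameterized system is unique (last sentence of Theorem~\ref{lokEx_param}), the solution must share this symmetry: $\rho(t,\cdot)$ and $u(t,\cdot)$ are functions of $x$ alone. Thus $\Gamma(t)$ is again a surface of revolution generated by a profile $w(t,\cdot)$ with $w(0,\cdot)=w_0$, and $c(t)$ is a function of $x$ only with $c(0,\cdot)=c_0$. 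One has to check that the regularity and positivity hypotheses of Lemmas~\ref{NKonv_H} and~\ref{NKonv_V} are met; this follows from the regularity of $(\rho,u)$ in Theorem~\ref{lokEx_param} and from $w_0>0$ on the open interval together with the smallness of $\|\rho\|$.

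\textbf{Step 2: the profile equation and the sign of the first principal curvature.} Combining $V=g(c)H$ with the formulas of Lemmas~\ref{NKonv_H} and~\ref{NKonv_V}, the profile satisfies, for $x$ in the interior,
\begin{align*}
\partial_t w = g(c)\left( \frac{w''}{1+|w'|^2} - \frac{1}{w} \right).
\end{align*}
At time $t=0$ we have $w_0\equiv 1$, $w_0'\equiv 0$, $w_0''\equiv 0$ on $[x_0,x_5]$, so $\partial_t w(0,x) = -g(c_0(x))$ there; in particular $w$ is pushed inward everywhere on $[x_0,x_5]$, and it is pushed \emph{more} on $[x_2,x_3]$ (where $g(c_0)=g_I$) than on $[x_0,x_1]\cup[x_4,x_5]$ (where $g(c_0)=g_O<g_I$). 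The first principal curvature of the surface of revolution is, up to a positive factor, $-w''/(1+|w'|^2)^{3/2}$ (with our outward-normal sign convention), so convexity on the cylindrical part requires $w''\le 0$ there. The goal is to show that after a short positive time $w''>0$ somewhere in the neck region $[x_2,x_3]$, which makes the first principal curvature negative and hence $\Gamma(t)$ non-convex (cf.\ Figure~\ref{figure_w}).

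\textbf{Step 3: quantitative comparison.} The cleanest way is a first-order expansion: write $w(t,x) = w_0(x) - t\,g(c_0(x)) + o(t)$ uniformly in $x$ on a closed interval inside $(x_0-1,x_5+1)$, using the uniform estimates $\|\partial_t\rho(t)-\partial_t\rho(0)\|_{C^0}+\|\rho(t)-\rho(0)\|_{C^2}\le RT^{1/4}$ from Theorem~\ref{lokEx_param} to control the error (and likewise for $u$, which controls $g(c(t))$ near $g(c_0)$). Then $w''(t,x) = -t\,\big(g\circ c_0\big)''(x) + o(t)$ on $[x_0,x_5]$, and since $g(c_0)$ is designed to be strictly convex somewhere in the transition region just inside $x_2$ (it goes from the lower value $g_O$ up to the higher value $g_I$, as depicted in Figure~\ref{figure_w0,gc0}), we have $\big(g\circ c_0\big)''(x_\ast)>0$ at such a point $x_\ast$. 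Hence for all sufficiently small $t>0$, $w''(t,x_\ast)<0$ — wait, that is the wrong sign; so instead I would pick $x_\ast$ where $g(c_0)$ is strictly \emph{concave} (near the top of the bump, or on the other side of the transition), giving $\big(g\circ c_0\big)''(x_\ast)<0$ and therefore $w''(t,x_\ast)>0$ for small $t>0$; shrinking $T$ if necessary, this yields a point where the first principal curvature is strictly negative, so $\Gamma(t)$ is not convex.

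\textbf{Main obstacle.} The routine parts are the curvature formulas (already quoted) and the uniform short-time estimate (already in Theorem~\ref{lokEx_param}); the real work is Step~1, namely justifying rigorously that the solution inherits the rotational symmetry of the data and that this reduces the PDE system to the scalar equation for $w(t,x)$ with the stated regularity — i.e., transferring the abstract existence/uniqueness statement to the geometric, rotationally symmetric picture so that Lemmas~\ref{NKonv_H} and~\ref{NKonv_V} genuinely apply. One must also be slightly careful that the transition profile of $g(c_0)$ between the two plateau values necessarily contains an interval of strict concavity (any smooth monotone function joining two constant levels does), and that $c_0$ can indeed be chosen in $C^\infty$ with the prescribed plateaus while keeping $c_{0,I}\le c_0\le c_{0,O}$, so that $u_0\in C^4(\Sigma)$ as required by Theorem~\ref{lokEx_param}; this is elementary but should be stated.
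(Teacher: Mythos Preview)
Your Steps 1--2 match the paper's setup almost exactly, including the reduction to the profile equation $\partial_t w = g(c)\big(w''/(1+|w'|^2)-1/w\big)$ via rotational symmetry and uniqueness. The gap is in Step~3: the passage from the pointwise expansion of $w$ to the pointwise expansion of $w''$ is not justified by the available estimates. From $\|\partial_t\rho(t)-\partial_t\rho(0)\|_{C^0}\le RT^{1/4}$ you correctly get $w(t,x)=1-t\,g(c_0(x))+O(tT^{1/4})$ in $C^0$; but differentiating a $C^0$ expansion twice in $x$ is illegitimate, and the only $C^2$ information you have is $\|\rho(t)-\rho(0)\|_{C^2}\le RT^{1/4}$, i.e.\ $|w''(t,x)|\le RT^{1/4}$. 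That bound is of order $T^{1/4}$, not $o(t)$, so it swamps the putative leading term $-t\,(g\circ c_0)''(x_\ast)$. Without an estimate of the form $\|\partial_t\rho(t)-\partial_t\rho(0)\|_{C^2}\to 0$ (which Theorem~\ref{lokEx_param} does not provide, and which would require more regularity than $w\in C^1_tC^0_x\cap C^0_tC^2_x$), you cannot conclude anything about the sign of $w''(t,x_\ast)$.

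The paper circumvents this by staying at the $C^0$ level. Using the same bounds you quote, it shows that for small enough $T$ and all $t\in(0,T]$,
\[
\partial_t w(t,x)\in\Big[-g(c_0(x))\tfrac{1}{w_0(x)}-\varepsilon,\;-g(c_0(x))\tfrac{1}{w_0(x)}+\varepsilon\Big]
\]
with $\varepsilon$ so small that $g_O+2\varepsilon<g_I$. Integrating in $t$ then gives, for every $x\in[x_2,x_3]$ and $y\in[x_0,x_1]\cup[x_4,x_5]$, the strict inequality $w(t,y)>w(t,x)$. Since a convex surface of revolution must have a concave profile (equivalently $w''\le 0$), a profile that is strictly smaller in the middle than on both sides cannot come from a convex surface; this is exactly the configuration in Figure~\ref{figure_w}. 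So the non-convexity is detected by a comparison of \emph{values} of $w$ at different points, not by a direct sign computation of $w''$ at a single point. Your argument can be repaired by replacing Step~3 with this $C^0$ comparison.

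A minor side remark: you flag the symmetry reduction as the ``main obstacle'', but in the paper it is dispatched in one line via uniqueness; the substantive work is precisely the quantitative $C^0$ comparison above.
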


\begin{proof}
We know with Lemma \ref{meanconvex_sigma} that $\Gamma_0$ is convex. So, we only have to show that its evolution under \eqref{eq_Intro_GLS} with initial concentration $u_0$ turns non-convex. 
\begin{enumerate}
\item[Step 1:] \textit{Application of the short time existence result} \\
By assumption, we have $G \in C^7(\R)$ with $G''>0$ and $g>0$. Moreover, $\Sigma \subset \R^3$ is a $C^5$-embedded closed hypersurface and we have $\rho_0 \defr 0, u_0 \in C^4(\Sigma)$ with $\|\rho_0\|_{C^4(\Sigma)} < \delta_1$ and $\|\rho_0\|_{C^2(\Sigma)} < \delta_0$ for any $\delta_0, \delta_1 > 0$. For $T>0$ sufficiently small, Theorem \ref{lokEx_param} thus yields the existence of $\rho, u \in C^1\big([0,T],C^0(\Sigma)\big) \cap C^0\big([0,T],C^2(\Sigma)\big)$ with 
\begin{align*}
\left\{
\begin{aligned}
\partial_t \rho \phantom{bl} &= \phantom{bl} g(u)a(\rho)H(\rho) & &\text{ on } [0,T] \times \Sigma, \\
\partial_t u \phantom{bl} &= \phantom{bl} \Delta_{\Gamma_\rho} G'(u) + g(u) a(\rho) H(\rho) \nu_\Sigma \cdot \nabla_{\Gamma_\rho} u + g(u) H(\rho)^2 u & &\text{ on } [0,T] \times \Sigma, \\
\rho(0) \phantom{bl} &= \phantom{bl} \rho_0 & &\text{ on } \Sigma, \\
u(0) \phantom{bl} &= \phantom{bl} u_0 & &\text{ on } \Sigma
\end{aligned}
\right.
\end{align*}
as well as 
\begin{align}\label{Hoelder_Abschaetzung}
\|\rho(t)-\rho(0)\|_{C^2(M)} \leq RT^{1/4} 
\phantom{xx} \text{and} \phantom{xx}
\|u(t)-u(0)\|_{C^2(M)} \leq RT^{1/4}
\end{align}
for all $t \in [0,T]$ and some constant $R>0$.
With $\theta_\rho(t,z) \defr z + \rho(t,z)\nu_\Sigma(z)$ and $\Gamma_\rho(t) \defr \theta_\rho(t,\Sigma)$, hence the evolving hypersurface $\Gamma_\rho \defr \big\{ \{t\} \times \Gamma_\rho(t) \, \big| \, t \in [0,T] \big\}$ and the function $u \circ \theta_\rho^{-1}$ are a solution of \eqref{eq_Intro_GLS} with $\Gamma_\rho(0) = \Sigma = \Gamma_0$ and $u \circ \theta_\rho^{-1}(0) = u_0$. Due to the uniqueness property of the solution, the rotational symmetry of $\Sigma = \Gamma_0$ and $u_0$ implies that also $\Gamma_\rho(t)$ and $u(t)$ are rotationally symmetric. In particular, for every $t \in [0,T]$, there exists $w(t): [\tilde{x_0}(t),\tilde{x_5}(t)] \rightarrow \R$ with
\begin{align*}
\Gamma_\rho(t) = \Bigg\{ 
\begin{bmatrix}   
x \\
w(t,x) \cos \varphi\\
w(t,x) \sin \varphi
\end{bmatrix}
\, \Bigg| \, x \in [\tilde{x_0}(t),\tilde{x_5}(t)], \varphi \in [0,2\pi] \Bigg\}
\end{align*}
and a function $c: [0,T] \times [x_0-1,x_5+1] \rightarrow \R$ with $u\big(t,\gamma(x,\varphi)\big) = c(t,x)$ for every $x \in [x_0-1,x_5+1]$ and $\varphi \in [0,2\pi]$. For $T>0$ sufficiently small, we can assume $[x_0,x_5] \subset [\tilde{x_0}(t),\tilde{x_5}(t)]$ for all $t \in [0,T]$ as well as $w > 0$ on $[0,T] \times [x_0,x_5]$. Then we have $c \in \E_{1,T} \defr C^1\big([0,T],C^0([x_0,x_5])\big) \cap C^0\big([0,T],C^2([x_0,x_5])\big)$. On account of $\nu_\Sigma \circ \gamma(x,\varphi) = (0,\cos \varphi, \sin \varphi)$ on $[x_0,x_5]$, 
\begin{align*}
w(t,x) = 1 + \rho\big(t,\gamma(x,\varphi)\big)
\end{align*}
follows for all $x \in [x_0,x_5]$ and $\varphi \in [0,2\pi]$ and hence we also have $w \in \E_{1,T}$. 
\item[Step 2:] \textit{Estimation of the Hölder-functions} \\
Due to Estimates \eqref{Hoelder_Abschaetzung}, we have 
\begin{align*}
\|w(t)-w_0\|_{C^0([x_0,x_5])} + \|\partial_x w(t)\|_{C^0([x_0,x_5])} + \|\partial_{xx} w(t)\|_{C^0([x_0,x_5])} &\leq R T^{1/4} \\
\text{and} \phantom{xx} \|c(t)-c_0\|_{C^0([x_0,x_5])} &\leq R T^{1/4}
\end{align*}
for all $t \in [0,T]$. For $T>0$ sufficiently small, we can assume $0 \leq c \leq 2c_{0,O}$ and then $g \in C^1(\R)$ implies 
\begin{align*}
\big\|g\big(c(t)\big)-g(c_0)\|_{C^0([x_0,x_5])} 
\leq \|g\|_{C^1([0,2c_{0,O}])} \|c(t)-c_0\|_{C^0([x_0,x_5])}
\lesssim R T^{1/4}
\end{align*}
for all $t \in [0,T]$. Hence, for arbitrary $\tilde{\varepsilon}>0$, we can choose $T>0$ sufficiently small such that for any $(t,x) \in [0,T] \times [x_0,x_5]$, 
\begin{align*}
\frac{\partial_{xx} w(t,x)}{1 + |\partial_x w(t,x)|^2} - \frac{1}{w(t,x)}
\begin{cases}
\leq \frac{R T^{1/4}}{1+0} - \frac{1}{w_0(x)+R T^{1/4}} &\leq -\frac{1}{w_0(x)} + \tilde{\varepsilon}, \\
\geq \frac{-R T^{1/4}}{1+R^2 T^{1/2}} - \frac{1}{w_0(x) - R T^{1/4}} &\geq - \frac{1}{w_0(x)} - \tilde{\varepsilon}
\end{cases}
\end{align*}
as well as
\begin{align*}
g\big(c(t,x)\big) 
\begin{cases}
\leq g\big(c_0(x)\big) + \tilde{\varepsilon}, \\
\geq g\big(c_0(x)\big) - \tilde{\varepsilon}
\end{cases}
\end{align*}
hold. Overall,
\begin{align*}
g\big(c(t,x)\big) \left( \frac{\partial_{xx} w(t,x)}{1 + |\partial_x w(t,x)|^2} - \frac{1}{w(t,x)} \right)
\begin{cases}
\leq -g\big(c_0(x)\big) \frac{1}{w_0(x)} + \varepsilon, \\
\geq -g\big(c_0(x)\big) \frac{1}{w_0(x)} - \varepsilon
\end{cases}
\end{align*}
follows, where we can choose $\varepsilon>0$ sufficiently small such that $g_O + 2\varepsilon < g_I$ is valid. \\
Lemmas \ref{NKonv_H} and \ref{NKonv_V} yield that rotationally symmetric solutions of \eqref{eq_Intro_GLS} fulfill
\begin{align*}
\partial_t w = g(c) \left( \frac{\partial_{xx} w}{1 + |\partial_x w|^2} - \frac{1}{w} \right).
\end{align*}
So, for every $(t,x) \in [0,T] \times [x_0,x_5]$,
\begin{align*}
w(t,x) 
= w_0(x) + \int_0^t \partial_t w(s,x) \mathrm{d}s
\begin{cases}
\leq w_0(x) + \left( -g\big(c_0(x)\big) \frac{1}{w_0(x)} + \varepsilon \right) \cdot t, \\
\geq w_0(x) + \left( -g\big(c_0(x)\big) \frac{1}{w_0(x)} - \varepsilon \right) \cdot t
\end{cases}
\end{align*}
holds. For all $x \in [x_2,x_3]$ and all $y \in [x_0,x_1] \cup [x_4,x_5]$, we have $w_0(x)=w_0(y)=1$ and $g\big(c_0(x)\big)=g_I$, $g\big(c_0(y)\big)=g_O$; and hence for every $t \in (0,T]$
\begin{align*}
w(t,y) 
&\geq w_0(y) + \left( -g\big(c_0(y)\big) \frac{1}{w_0(y)} - \varepsilon \right) \cdot t 
= 1 + ( -g_O - \varepsilon ) \cdot t \\
&> 1 + ( -g_I + \varepsilon ) \cdot t
= w_0(x) + \left( -g\big(c_0(x)\big) \frac{1}{w_0(x)} + \varepsilon \right) \cdot t
\geq w(t,x)
\end{align*}
follows. Thus, $\Gamma_\rho(t)$ is not convex for $t \in (0,T]$. A possible shape of the function $w(t,\cdot)$ is illustrated in Figure \ref{figure_w}. \qedhere
\end{enumerate}
\end{proof}

\subsection{Formation of Self-Intersections}\label{ChapSI}

The usual mean curvature flow does not allow for self-intersections: As a consequence of the maximum principle for parabolic differential equations, an initially embedded surface will remain embedded as long as it exists. This property does not transfer to our scaled mean curvature flow and we develop a concrete example for the occuring of self-intersections in this section. \\
An embedded hypersurface can obviously never have a self-intersection. As an initially embedded surface will stay embedded at least for a small time, a short-time existence result only for the case of embedded surfaces would provide the evolution of this surface only as long as there does not occur a self-intersection (yet). To describe self-intersections it is thus necessary to use the theory of immersed hypersurfaces and it is crucial that we proved the short-time existence result also for the case of immersed surfaces. \\ 
The idea is to start with a very thin, curved tube as in Figure \ref{figure_Gamma_0}. The curvature forces both sides of the tube to move to the right. Choosing an initial concentration that increases the evolution for the left side in comparison with the right side of the tube will produce a self-intersection. To rigorously prove this, we have to ensure that firstly the evolution provided by the short-time existence result lasts long enough such that the self-intersection will occur during that time and secondly that the concentration does not distribute too fast but that the difference in concentration from the left to the right side of the tube persists long enough such that the self-intersection will form. 
Both of this will be achieved by choosing the initial tube sufficiently thin. Therefore, we need to ensure that both the final time (up to which the existence result guarantees the existence of an evolution) as well as the evolution driving force (i.e. the concentration) can be controlled independently of the initial thickness of the tube. 
For this reason, we formulated the short-time existence result in a version that allows for small changes in the initial hypersurface. In particular, it yields a final time and a bound on the solution which are both independent of the initial thickness of the tube. The latter can then be used to control the driving force. \\
Now, we construct this example concretely. We set $d=1$, so we will be dealing with curves instead of hypersurfaces, but the example can easily be extended to more dimensions using rotation arguments. 
First, we fix an energy density function $G \in C^7(\R)$ with $G''>0$ and $g \defr G-G' \cdot \Id >0$. Additionally, we assume that $g$ is not a constant function, because otherwise we would obtain a constantly scaled mean curvature flow which of course never develops self-intersections. 
As reference surface, we choose $\Sigma=F(M)$ as illustrated in Figure \ref{figure_Sigma}: $\Sigma$ is an immersed curve, consisting of a circular arc with radius $R>0$ that is smoothly connected at the endings such that it forms a closed curve. A possibility of splitting the arc smoothly is to add appropriate exponential terms to the parameterization of the arc, as illustrated in Figure \ref{figure_split}. Then, the free ends can be connected easily to form a curve of arbitrary smoothness. We choose $\Sigma$ to be of differentiability $C^5$ at least. \\
By construction, there exist two preimages of $(0,0) \in \Sigma$ in $M$, which we call $z_l$ and $z_r$. Choosing the sign of the unit normal field $\nu_\Sigma$ of $\Sigma$ as in Figure \ref{figure_Sigma}, we get
\begin{align*}
\begin{alignedat}{3}
F(z_l) &= (0,0), \phantom{xxx}& F(z_r) &= (0,0), \\
\nu_\Sigma(z_l) &= (-1,0), \phantom{xxx}& \nu_\Sigma(z_r) &= (+1,0).
\end{alignedat}
\end{align*}
We fix a constant height function $\rho_0 > 0$ which will be scaled with a small $\varepsilon \in (0,1)$ and we define $\rho_0^\varepsilon \defr \varepsilon \rho_0$. Finally, we choose an initial concentration $u_0 \in C^4(M)$ with 
\begin{align*}
g\big(u_0(z_l)\big) > g\big(u_0(z_r)\big),
\end{align*}
which is possible as $g$ is not constant.

\begin{figure}[!htb]
\begin{minipage}[t]{0.45\textwidth}
\begin{center}
\begin{tikzpicture}[domain=-1:2.3]
\draw[->, dotted] (-0.8,0) -- (3,0);
\draw[->, dotted] (0,-2.3) -- (0,2.3);
\draw[thick] ([shift=(90:2)]2,0) arc (90:270:2);
\draw[thick] ([shift=(-90:0.25)]2.25,2) arc(-90:90:0.25);
\draw[thick, domain=2:2.25] plot(\x,{306-432*\x+204*\x^2-32*\x^3});
\draw[thick, domain=2:2.25] plot(\x,{-302+432*\x-204*\x^2+32*\x^3});
\draw[thick] ([shift=(-90:0.25)]2.25,-2) arc(-90:90:0.25);
\draw[thick, domain=2:2.25] plot(\x,{302-432*\x+204*\x^2-32*\x^3});
\draw[thick, domain=2:2.25] plot(\x,{-306+432*\x-204*\x^2+32*\x^3});
\node[right] at (2.5,2) {$\Sigma$};
\draw[blue,->] (2,0) -- (1.5,1.94) node[below] {$\phantom{xxx} R$};
\draw[red,->] (0,0) -- (0.5,0) node[below] {$\phantom{xx} \nu_\Sigma(z_r)$};
\draw[red,->] (0,0) -- (-0.5,0) node[below] {$\nu_\Sigma(z_l) \phantom{x}$};
\draw[mygreen, fill=mygreen]  (0,0) circle (2pt) node[above] {$(0,0) \phantom{xxxx}$};
\node[white,above] at (1,1.95) {$R+\varepsilon\rho_0$};
\draw[white,thick, domain=2:2.25, samples=10] plot(\x,{-306.1+432*\x-204*\x^2+32*\x^3});
\end{tikzpicture}
\end{center}
  \caption{The immersed reference curve $\Sigma$, consisting of a circular arc with radius $R$ that is smoothly connected at the endings. The points $z_l,z_r \in M$ are both preimages of $(0,0) \in \Sigma$. We choose the sign of the unit normal $\nu_\Sigma$ such that it points outwards in a neighborhood of $z_l$ and inwards in a neighborhood of $z_r$.}\label{figure_Sigma}
\end{minipage} \hfill
\begin{minipage}[t]{0.45\textwidth}
\begin{center}
\begin{tikzpicture}[domain=-1:3]
\draw[->, dotted] (-0.8,0) -- (3,0);
\draw[->, dotted] (0,-2.3) -- (0,2.3);
\draw[thick] ([shift=(90:2.1)]2,0) arc (90:270:2.1);
\draw[thick] ([shift=(90:1.9)]2,0) arc (90:270:1.9);
\draw[thick] ([shift=(-90:0.35)]2.25,2) arc(-90:90:0.35);
\draw[thick, domain=2:2.25] plot(\x,{306.1-432*\x+204*\x^2-32*\x^3});
\draw[thick, domain=2:2.25] plot(\x,{-302.1+432*\x-204*\x^2+32*\x^3});
\draw[thick] ([shift=(-90:0.35)]2.25,-2) arc(-90:90:0.35);
\draw[thick, domain=2:2.25] plot(\x,{302.1-432*\x+204*\x^2-32*\x^3});
\draw[thick, domain=2:2.25] plot(\x,{-306.1+432*\x-204*\x^2+32*\x^3});
\node[right] at (2.6,2) {$\Gamma_0^\varepsilon$};
\draw[<->] (0.82,1.49) -- (0.7,1.65);
\node[left] at (0.9,1.85) {$2\varepsilon\rho_0$};
\draw[blue,->] (2,0) -- (1.3,1.99);
\node[blue,above] at (1,1.95) {$R+\varepsilon\rho_0$};
\draw[blue,->] (2,0) -- (1.7,1.88);
\node[blue,right] at (1.7,1.4) {$R-\varepsilon\rho_0$};
\draw[red,->] (-0.1,0) -- (-0.6,0) node[below] {$\nu_{\rho^\varepsilon}(0,z_l) \phantom{xxx}$};
\draw[red,->] (0.1,0) -- (0.6,0) node[below] {$\phantom{xxxx} \nu_{\rho^\varepsilon}(0,z_r)$};
\draw[mygreen, fill=mygreen]  (-0.1,0) circle (2pt) node[above] {$\theta_{\rho^\varepsilon}(0,z_l) \phantom{xxxxxxx}$};
\draw[mygreen, fill=mygreen]  (0.1,0) circle (2pt) node[above] {$\phantom{xxxxxxxx} \theta_{\rho^\varepsilon}(0,z_r)$};
\end{tikzpicture}
\end{center}
  \caption{The initial curve $\Gamma_0^\varepsilon$, consisting of two smoothly connected circular arcs with radii $R \pm \varepsilon \rho_0$. Its unit normal $\nu_{\rho^\varepsilon} \circ \big(\theta_{\rho^\varepsilon}(0,\cdot)\big)^{-1}$ points outwards on the left arc and inwards on the right arc. As $\Gamma_0^\varepsilon$ is embedded, the images of $z_l, z_r \in M$ on $\Gamma_0^\varepsilon$ do not coincide.}\label{figure_Gamma_0}
\end{minipage} 
\end{figure}

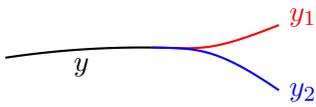
\begin{figure}[!htb]
\begin{minipage}[c]{0.3\textwidth}
\begin{tikzpicture}[domain=-1:3, scale=4]
\draw[thick,red,domain=1.01:1.15,samples=500] plot(3*\x,{sqrt(9-(3-3*\x)^2)+exp(1/(3-3*\x))});
\draw[thick,blue,domain=1.01:1.15,samples=500] plot(3*\x,{sqrt(9-(3-3*\x)^2)-exp(1/(3-3*\x))});
\draw[thick,domain=0.85:1.01,samples=500] plot(3*\x,{sqrt(9-(3-3*\x)^2)});
\node[above] at (2.8,2.87) {$y$};
\node[red,right] at (3.45,3.1) {$y_1$};
\node[blue,right] at (3.45,2.85) {$y_2$};
\end{tikzpicture}
\end{minipage}\hfill
\begin{minipage}[c]{0.65\textwidth}
\vspace{\baselineskip}
\caption{After reparameterization, $y(x) = \sqrt{R^2-x^2}$ parameterizes the circular arc. Adding appropriate exponential terms leads to smooth continuations $y_1(x) = \sqrt{R^2-x^2} + \exp \left(\frac{-1}{x} \right)$ and $y_2(x) = \sqrt{R^2-x^2} - \exp \left(\frac{-1}{x} \right)$, $x>0$, of $y$ that split the arc into two parts.} \label{figure_split}
\end{minipage}
\end{figure}

\begin{figure}[!htb]
$\phantom{.}$
\begin{center}
\begin{minipage}[c]{0.7\textwidth}
\begin{center}
\begin{minipage}[c]{0.23\textwidth}
\begin{tikzpicture}[domain=0:2.25]
\draw[thick,domain=2:2.25] plot(\x,{-306+432*\x-204*\x^2+32*\x^3});
\draw[thick,domain=2:2.25] plot(\x,{306-432*\x+204*\x^2-32*\x^3});
\draw[thick] ([shift=(90:2)]2,0) arc (90:270:2);
\draw[thick] ([shift=(-90:0.25)]2.25,2) arc(-90:90:0.25);
\draw[thick, domain=2.15:2.25] plot(\x,{306-432*\x+204*\x^2-32*\x^3});
\draw[thick, domain=2:2.25] plot(\x,{-302+432*\x-204*\x^2+32*\x^3});
\draw[thick] ([shift=(-90:0.25)]2.25,-2) arc(-90:90:0.25);
\draw[thick, domain=2:2.25] plot(\x,{302-432*\x+204*\x^2-32*\x^3});
\draw[thick, domain=2.15:2.25] plot(\x,{-306+432*\x-204*\x^2+32*\x^3});
\node[right] at (0,0) {$\Sigma$};
\end{tikzpicture}
\end{minipage} 
\begin{minipage}[c]{0.05\textwidth}
\hfill
\end{minipage}
\begin{minipage}[c]{0.23\textwidth}
\begin{tikzpicture}[domain=0:2.25]
\draw[red,thick] ([shift=(90:2)]2,0) arc (90:270:2);
\draw[red,thick] ([shift=(-90:0.25)]2.25,2) arc(-90:90:0.25);
\draw[red,thick, domain=2:2.25] plot(\x,{306-432*\x+204*\x^2-32*\x^3});
\draw[red,thick, domain=2.15:2.25] plot(\x,{-302+432*\x-204*\x^2+32*\x^3});
\draw[red,thick] ([shift=(-90:0.25)]2.25,-2) arc(-90:90:0.25);
\draw[red,thick, domain=2.15:2.25] plot(\x,{302-432*\x+204*\x^2-32*\x^3});
\draw[red,thick, domain=2:2.25] plot(\x,{-306+432*\x-204*\x^2+32*\x^3});
\node[red,right] at (0,0) {$F(U_1)$};
\end{tikzpicture}
\end{minipage} 
\begin{minipage}[c]{0.05\textwidth}
\hfill
\end{minipage}
\begin{minipage}[c]{0.23\textwidth}
\begin{tikzpicture}[domain=0:2.25]
\draw[mygreen,thick] ([shift=(90:2)]2,0) arc (90:270:2);
\draw[mygreen,thick] ([shift=(-90:0.25)]2.25,2) arc(-90:90:0.25);
\draw[mygreen,thick, domain=2.15:2.25] plot(\x,{306-432*\x+204*\x^2-32*\x^3});
\draw[mygreen,thick, domain=2:2.25] plot(\x,{-302+432*\x-204*\x^2+32*\x^3});
\draw[mygreen,thick] ([shift=(-90:0.25)]2.25,-2) arc(-90:90:0.25);
\draw[mygreen,thick, domain=2:2.25] plot(\x,{302-432*\x+204*\x^2-32*\x^3});
\draw[mygreen,thick, domain=2.15:2.25] plot(\x,{-306+432*\x-204*\x^2+32*\x^3});
\node[mygreen,right] at (0,0) {$F(U_2)$};
\end{tikzpicture}
\end{minipage} 
\begin{minipage}{0.025\textwidth}
\hfill
\end{minipage}
\end{center}
\begin{center} 
\caption{The immersed surface $\Sigma$ can be covered by two embedded patches, called $F(U_1)$ and $F(U_2)$ with $U_1,U_2 \subset M$.} \label{figure_embeddedpatches}
\end{center}
\end{minipage}
\end{center}
\end{figure}
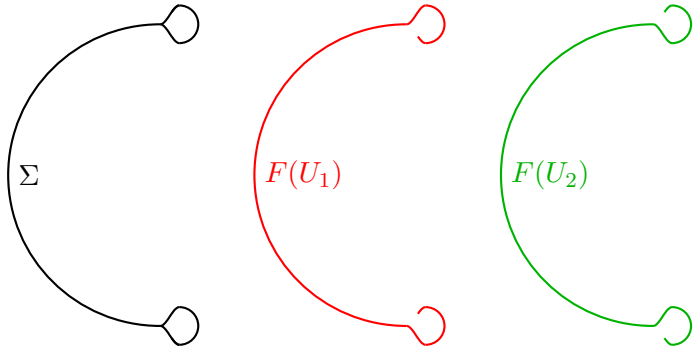

\begin{theorem}[Formation of Self-Intersections] $\phantom{x}$ \\
Let the energy density $G$, the reference curve $\Sigma = F(M)$ and the functions $\rho_0$ and $u_0$ be as above. For sufficiently small $\varepsilon>0$, the initial curve 
\begin{align*}
\Gamma_0^\varepsilon = \big\{ F(z) + \varepsilon\rho_0\nu_\Sigma(z) \, \big| \, z \in M \big\}
\end{align*}
is an embedded curve whose evolution under \eqref{eq_Intro_GLS} with initial concentration $u_0$ leads to a self-intersection.
\end{theorem}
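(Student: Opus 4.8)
The plan is to combine the uniform short-time existence of Theorem \ref{lokEx_param} with a direct tracking of the two strands of the thin tube at the preimages $z_l,z_r$ of the self-intersection point $(0,0)$ of $\Sigma$. The crucial elementary observation is that, along the parameterization $\theta_{\rho^\varepsilon}(t,z)=F(z)+\rho^\varepsilon(t,z)\nu_\Sigma(z)$ supplied by that theorem, the choices $F(z_l)=F(z_r)=(0,0)$, $\nu_\Sigma(z_l)=(-1,0)$ and $\nu_\Sigma(z_r)=(+1,0)$ force $\theta_{\rho^\varepsilon}(t,z_l)=\big(-\rho^\varepsilon(t,z_l),0\big)$ and $\theta_{\rho^\varepsilon}(t,z_r)=\big(\rho^\varepsilon(t,z_r),0\big)$, so that any zero of the continuous function $\phi^\varepsilon(t)\defr\rho^\varepsilon(t,z_l)+\rho^\varepsilon(t,z_r)$ is a self-intersection of the evolving curve (two distinct points of $M$ having the same image). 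Embeddedness of $\Gamma_0^\varepsilon=\theta_{\varepsilon\rho_0}(0,M)$ I would get from the fact that $\Sigma$ is an immersion of a compact $1$-manifold whose only self-intersection is the transversal double point $(0,0)$: for $\varepsilon$ small the map $z\mapsto F(z)+\varepsilon\rho_0\nu_\Sigma(z)$ is a $C^5$-small perturbation of $F$, hence still an immersion, it separates the two local strands through $(0,0)$ (to the $x$-coordinates $-\varepsilon\rho_0$ and $+\varepsilon\rho_0$), and no new self-intersection can appear because $F$ is injective off $\{z_l,z_r\}$ and $M$ is compact.

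For the uniform short-time existence I would apply Theorem \ref{lokEx_param} with the \emph{$\varepsilon$-independent} choice $\delta_1\defr\rho_0+1$, so that the constant height function $\rho_0^\varepsilon\defr\varepsilon\rho_0$ satisfies $\|\rho_0^\varepsilon\|_{C^4(M)}=\varepsilon\rho_0<\delta_1$ for all $\varepsilon\in(0,1)$. This produces $\delta_0,T>0$ and a constant $\Lambda>0$ (the ``$R$'' of that theorem) depending only on $\Sigma,u_0,\delta_1$, hence independent of $\varepsilon$, such that for every $\varepsilon$ with $\varepsilon\rho_0<\delta_0$ there is a solution $(\rho^\varepsilon,u^\varepsilon)$ of the parameterized system on $[0,T]$ with $\rho^\varepsilon(0)=\rho_0^\varepsilon$, $u^\varepsilon(0)=u_0$, and
\begin{align*}
\|\rho^\varepsilon(t)-\rho^\varepsilon(0)\|_{C^2(M)}+\|u^\varepsilon(t)-u_0\|_{C^2(M)}\le 2\Lambda T^{1/4}\qquad(t\in[0,T]),
\end{align*}
and likewise for the time derivatives. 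As in Step~1 of the proof of Theorem \ref{NKonv_Thm}, for $T$ and $\varepsilon$ small $\|\rho^\varepsilon\|_{C^1}$ stays small, so $\theta_{\rho^\varepsilon}(t,\cdot)$ is an immersion for every $t$, and the evolving immersed closed curve $\Gamma^\varepsilon\defr\Gamma_{\rho^\varepsilon}$ together with the pullback concentration $u^\varepsilon$ on $M$ is a solution of \eqref{eq_Intro_GLS} on $[0,T]$ with $\Gamma^\varepsilon(0)=\Gamma_0^\varepsilon$ and initial concentration $u_0$ (the pullback convention makes inverting $\theta_{\rho^\varepsilon}$ unnecessary, which matters once a self-intersection forms). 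Since $\partial_t\rho^\varepsilon(0)$ and $\partial_t u^\varepsilon(0)$ are the right-hand sides of the parameterized equations evaluated at $(\rho_0^\varepsilon,u_0)$, they depend continuously on $\varepsilon\in[0,\delta_0/\rho_0)$ and converge as $\varepsilon\to0$; together with the estimate above this yields an $\varepsilon$-independent $K$ with $\|\partial_t\rho^\varepsilon\|_{C^0([0,T]\times M)}\le K$, hence $\|u^\varepsilon(t)-u_0\|_{C^0(M)}\le Kt$ for all $t\in[0,T]$.

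It then remains to see that $\phi^\varepsilon$ has a zero in $(0,T]$. By construction the two strands of $\Gamma_0^\varepsilon$ through the origin are concentric circular arcs of radius $R+\varepsilon\rho_0$ (at $z_l$, with outward normal) and $R-\varepsilon\rho_0$ (at $z_r$, with inward normal), so $H(\rho_0^\varepsilon)(z_l)=-\tfrac1{R+\varepsilon\rho_0}$, $H(\rho_0^\varepsilon)(z_r)=\tfrac1{R-\varepsilon\rho_0}$ and $a(\rho_0^\varepsilon)(z_l)=a(\rho_0^\varepsilon)(z_r)=1$. Using $\partial_t\rho^\varepsilon(t,z)=g\big(u^\varepsilon(t,z)\big)\,a(\rho^\varepsilon(t))(z)\,H(\rho^\varepsilon(t))(z)$, the $C^2$-estimate for $\rho^\varepsilon$ (choosing $T$ small enough that $\Lambda T^{1/4}$ is small, so that $H(\rho^\varepsilon(t))$ and $a(\rho^\varepsilon(t))$ stay close to their $t=0$ values), and the bound $\|u^\varepsilon(t)-u_0\|_{C^0}\le Kt$ (so that $g(u^\varepsilon(t,z_l))$ and $g(u^\varepsilon(t,z_r))$ stay close to $g(u_0(z_l))$ and $g(u_0(z_r))$), one obtains: for every small $\eta>0$ there are $t_1,\varepsilon_0>0$ such that for $\varepsilon<\varepsilon_0$ and $t\in[0,t_1]$,
\begin{align*}
\frac{\mathrm d}{\mathrm dt}\phi^\varepsilon(t)=\partial_t\rho^\varepsilon(t,z_l)+\partial_t\rho^\varepsilon(t,z_r)\le-\frac1R\big(g(u_0(z_l))-g(u_0(z_r))\big)+2\eta.
\end{align*}
As $g(u_0(z_l))>g(u_0(z_r))$, choosing $\eta\defr\tfrac1{4R}\big(g(u_0(z_l))-g(u_0(z_r))\big)$ makes the right-hand side a negative constant $-c$, hence $\phi^\varepsilon(t)\le\phi^\varepsilon(0)-ct=2\varepsilon\rho_0-ct$ on $[0,t_1]$. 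For $\varepsilon$ so small that $2\varepsilon\rho_0/c<\min\{t_1,T\}$ this gives $\phi^\varepsilon(2\varepsilon\rho_0/c)\le0<2\varepsilon\rho_0=\phi^\varepsilon(0)$, so by continuity $\phi^\varepsilon(t_*)=0$ for some $t_*\in(0,T]$; thus $\Gamma^\varepsilon(t_*)$ has a self-intersection, which is the assertion.

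The main obstacle is the uniform-in-$\varepsilon$ bookkeeping, and it is twofold. First, Theorem \ref{lokEx_param} must be used precisely in the form where the existence time $T$ and the constant $\Lambda$ depend only on $\Sigma,u_0,\delta_1$ while $\|\rho_0^\varepsilon\|_{C^2}<\delta_0$ is merely required small — otherwise the existence interval could shrink together with the tube thickness and the self-intersection might fall outside it. Second, the concentration must not equilibrate before the collision: the collision time is $O(\varepsilon)$ whereas $\|u^\varepsilon(t)-u_0\|_{C^0}=O(t)$ \emph{uniformly} in $\varepsilon$, so the concentration gap $g(u_0(z_l))-g(u_0(z_r))$ survives long enough to close the $O(\varepsilon)$ separation of the strands. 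The geometric signs — $H<0$ on the outward strand at $z_l$ and $H>0$ on the inward strand at $z_r$, so both strands move in the $+x$-direction, the one at $z_l$ faster precisely because $g(u_0(z_l))>g(u_0(z_r))$ and the two reference arcs have the same curvature magnitude $1/R$ — are then a routine check with the orientations fixed in Figures \ref{figure_Sigma} and \ref{figure_Gamma_0}.
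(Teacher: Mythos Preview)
Your approach is essentially the same as the paper's: apply Theorem \ref{lokEx_param} with $\varepsilon$-independent $\delta_1$ to get a uniform existence time and uniform H\"older bounds, then track the scalar function $\rho^\varepsilon(t,z_l)+\rho^\varepsilon(t,z_r)$ and show it changes sign before time $T$. The only notable difference is that the paper controls $\partial_t\rho^\varepsilon(t)$ directly via the single estimate $\|\partial_t\rho^\varepsilon(t)-\partial_t\rho^\varepsilon(0)\|_{C^0(M)}\le R^hT_0^{1/4}$ from Theorem \ref{lokEx_param} and integrates, whereas you decompose $\partial_t\rho^\varepsilon=g(u^\varepsilon)a(\rho^\varepsilon)H(\rho^\varepsilon)$ and bound each factor separately using the $C^2$-estimates on $\rho^\varepsilon$ and $u^\varepsilon$; the paper's route is shorter but both are correct. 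One small slip: the sentence ``$\|\partial_t\rho^\varepsilon\|_{C^0}\le K$, hence $\|u^\varepsilon(t)-u_0\|_{C^0}\le Kt$'' is a non sequitur as written---you mean to apply the parallel argument to $\partial_t u^\varepsilon$ (and in fact the estimate $\|u^\varepsilon(t)-u_0\|_{C^2}\le\Lambda T^{1/4}$ from Theorem \ref{lokEx_param} already suffices, so the $O(t)$ detour is unnecessary).
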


\begin{proof}
The short-time existence result (Theorem \ref{lokEx_param}) yields the existence of a $T>0$ and an $\varepsilon_0>0$ such that for all $\varepsilon \in (0,\varepsilon_0]$ there exists a solution $(\rho^\varepsilon, u^\varepsilon): [0,T] \times M \rightarrow \R^2$ of 
\begin{align*}
\left\{
\begin{aligned}
\partial_t \rho^\varepsilon \phantom{bl} &= \phantom{bl} g(u^\varepsilon)a(\rho^\varepsilon)H(\rho^\varepsilon) & &\text{ on } [0,T] \times M, \\
\partial_t u^\varepsilon \phantom{bl} &= \phantom{bl} \Delta_{\Gamma_{\rho^\varepsilon}} G'(u^\varepsilon) + g(u^\varepsilon) a(\rho^\varepsilon) H(\rho^\varepsilon) \nu_\Sigma \cdot \nabla_{\Gamma_{\rho^\varepsilon}} u^\varepsilon + g(u^\varepsilon) H(\rho^\varepsilon)^2 u^\varepsilon & &\text{ on } [0,T] \times M, \\
\rho^\varepsilon(0) \phantom{bl} &= \phantom{bl} \rho_0^\varepsilon = \varepsilon \rho_0 & &\text{ on } M, \\
u^\varepsilon(0) \phantom{bl} &= \phantom{bl} u_0 & &\text{ on } M
\end{aligned}
\right.
\end{align*}
with $\rho^\varepsilon, u^\varepsilon \in \E_{1,T} = C^1\big([0,T],C^0(M)\big) \cap C^0\big([0,T],C^2(M)\big)$. Independently of $\varepsilon \in (0,\varepsilon_0]$,
\begin{align}\label{Hoelder_Abschaetzung2}
\|\partial_t \rho^\varepsilon(t)-\partial_t \rho^\varepsilon(0)\|_{C^0(M)} \leq R^hT^{1/4}
\end{align}
holds for all $t \in [0,T]$ and some constant $R^h>0$. \\
The evolving curve is described by the global parameterization 
\begin{align*}
\theta_{\rho^\varepsilon}: [0,T] \times M \rightarrow \R^2, \phantom{xx} \theta_{\rho^\varepsilon}(t,z) \defr F(z) + \rho^\varepsilon(t,z)\nu_\Sigma(z).
\end{align*}
\begin{enumerate}
\item[Step 1:] \textit{Embeddedness of the initial hypersurface} \\
As $\Sigma$ is closed, it can be covered by finitely many embedded patches. As shown in Figure \ref{figure_embeddedpatches}, two embedded patches are sufficient to cover $\Sigma$. Let $U_1,U_2 \subset M$ be the preimages of these embedded patches $F(U_1), F(U_2)$. We can choose $\varepsilon>0$ sufficiently small, such that for both embedded patches $F(U_i) \subset \Sigma$, 
\begin{align*}
\theta_{\rho^\varepsilon}(0,\cdot) \circ F^{-1}: F(U_i) \rightarrow \R^2, 
\phantom{xx} p \mapsto p + \rho_0^\varepsilon \nu_\Sigma\big(F^{-1}(p)\big) = p + \rho_0^\varepsilon \nu_{F(U_i)}(p)
\end{align*}
is an embedding (see \cite[Proposition 2.60]{Buerger}, where we assume w.l.o.g. that $F(U_i)$ is expanded to an embedded closed hypersurface.). In particular, for any $z_1, z_2 \in U_i$ with $z_1 \neq z_2$ we have $F(z_1) \neq F(z_2)$ and thus $\theta_{\rho^\varepsilon}(0,z_1) \neq \theta_{\rho^\varepsilon}(0,z_2)$. For any $z_1 \in U_1 \setminus U_2$ and $z_2 \in U_2 \setminus U_1$, we clearly have 
\begin{align*}
\theta_{\rho^\varepsilon}(0,z_1) = F(z_1) + \rho_0^\varepsilon \nu_\Sigma(z_1) \neq F(z_2) + \rho_0^\varepsilon \nu_\Sigma(z_2) = \theta_{\rho^\varepsilon}(0,z_2)
\end{align*}
as the initial height function $\rho_0^\varepsilon = \varepsilon \rho_0$ is positive everywhere and $\nu_{\Sigma \, |U_1 \setminus U_2}$ points outwards whereas $\nu_{\Sigma \, |U_2 \setminus U_1}$ points inwards, and so $z_1$ and $z_2$ are driven apart by $\theta_{\rho^\varepsilon}(0,\cdot)$. Altogether, if $\varepsilon>0$ is sufficiently small, for any $z_1, z_2 \in M$ with $z_1 \neq z_2$ also $\theta_{\rho^\varepsilon}(0,z_1) \neq \theta_{\rho^\varepsilon}(0,z_2)$ holds. 
This implies that for $\varepsilon>0$ sufficiently small, 
\begin{align*}
\theta_{\rho^\varepsilon}(0, \cdot): M \rightarrow \R^2, \phantom{xx} \theta_{\rho^\varepsilon}(0,z) \defr F(z) + \rho_0^\varepsilon\nu_\Sigma(z)
\end{align*}
is injective and thus an embedding. Therefore, the initial curve 
\begin{align*}
\Gamma_0^\varepsilon = \big\{ \theta_{\rho^\varepsilon}(0,z) \, \big| \, z \in M \big\}
\end{align*}
is an embedded curve. The curve $\Gamma_0^\varepsilon$ is illustrated in Figure \ref{figure_Gamma_0} and we collect some of its geometric quantities now. Because $\rho_0^\varepsilon$ is constant, the curve $\Gamma_0^\varepsilon$ consists of two circular arcs, the inner one with radius $R-\varepsilon\rho_0$ and the outer one with radius $R+\varepsilon\rho_0$. Due to the chosen sign of the normal, the (mean) curvature of the inner arc is positive and that of the outer arc is negative. Especially in our fixed points $z_l$ and $z_r$, we thus have
\begin{align*}
H(\rho^\varepsilon)(0,z_l) = \frac{-1}{R+\varepsilon \rho_0} 
\phantom{xxx}\text{ and }\phantom{xxx}
H(\rho^\varepsilon)(0,z_r) = \frac{1}{R-\varepsilon \rho_0}
\end{align*}
as well as $\nu_{\rho^\varepsilon}(0,z_l) = (-1,0) = \nu_\Sigma(z_l)$ and $\nu_{\rho^\varepsilon}(0,z_r) = (+1,0) = \nu_\Sigma(z_r)$. So, 
\begin{align*}
a(\rho^\varepsilon)(0,z_l) = 1 
\phantom{xxx}\text{ and }\phantom{xxx}
a(\rho^\varepsilon)(0,z_r) = 1
\end{align*}
follow. 
\item[Step 2:] \textit{Formation of self-intersection} \\
We want to show that the evolution of $\Gamma_0^\varepsilon$ leads to a self-intersection. For any $t \in [0,T]$, 
\begin{align*}
\theta_{\rho^\varepsilon}(t,z_l) &= F(z_l) + \rho^\varepsilon(t,z_l)\nu_\Sigma(z_l) = (0,0) + \rho^\varepsilon(t,z_l)(-1,0) = \big( -\rho^\varepsilon(t,z_l), 0 \big), \\
\theta_{\rho^\varepsilon}(t,z_r) &= F(z_r) + \rho^\varepsilon(t,z_r)\nu_\Sigma(z_r) = (0,0) + \rho^\varepsilon(t,z_r)(+1,0) = \big( \rho^\varepsilon(t,z_r), 0 \big)
\end{align*}
holds. In particular,
\begin{align*}
\big[ \theta_{\rho^\varepsilon}(0,z_l) \big]_1 = - \varepsilon \rho_0 < \varepsilon \rho_0 = \big[ \theta_{\rho^\varepsilon}(0,z_r) \big]_1
\end{align*}
holds and if we have 
\begin{align}\label{Eq_Intersection}
\big[ \theta_{\rho^\varepsilon}(T_0,z_l) \big]_1 > \big[ \theta_{\rho^\varepsilon}(T_0,z_r) \big]_1
\Leftrightarrow
-\rho^\varepsilon(T_0,z_l) > \rho^\varepsilon(T_0,z_r)
\end{align}
for a $T_0 \in (0,T]$, then a self-intersection with $\theta_{\rho^\varepsilon}(T_1,z_l) = \theta_{\rho^\varepsilon}(T_1,z_r)$ occurred at a time $T_1 \in (0,T_0)$. All that is left to prove is thus the existence of a $T_0 \in (0,T]$ with \eqref{Eq_Intersection} for sufficiently small $\varepsilon>0$. \\ 
As we have $g\big(u_0(z_l)\big) > g\big(u_0(z_r)\big)$, there exists a $K \in \R_{>0}$ with 
\begin{align}\label{eq_g}
g\big(u_0(z_l)\big) = g\big(u_0(z_r)\big) + K.
\end{align}
Choose $T_0 \in (0,T]$ so small that $R^hT_0^{1/4} \leq \frac{K}{8R}$. Then, with Estimate \eqref{Hoelder_Abschaetzung2},
\begin{align}\label{eq_dtrho}
\big\| \partial_t \rho^\varepsilon(t,\cdot) - \partial_t \rho^\varepsilon(0,\cdot) \big\|_{C^0(M)}
\leq R^hT_0^{1/4}
\leq \frac{K}{8R}
\end{align}
holds for all $t \in [0,T_0]$ and $\varepsilon \in (0,\varepsilon_0]$. 
Now, choose $\varepsilon = \varepsilon(T_0) \in (0,\varepsilon_0]$ so small that $\varepsilon \leq \frac{R}{\sqrt{2}\rho_0}$ and that 
\begin{align*}
\varepsilon < \frac{T_0KR}{8\rho_0\big( R^2 + 2g(u_0(z_r)) \big)}.
\end{align*}
Then, with $H(\rho^\varepsilon_0)(z_l) = \frac{-1}{R+\varepsilon\rho_0}$ and $H(\rho^\varepsilon_0)(z_r) = \frac{1}{R-\varepsilon\rho_0}$,
\begin{align}\label{eq_H}
-H(\rho^\varepsilon_0)(z_l) \geq \frac{1}{2R} 
\phantom{xx} \text{ and } \phantom{xx} 
H(\rho^\varepsilon_0)(z_l) + H(\rho^\varepsilon_0)(z_r)
= \frac{2\varepsilon\rho_0}{R^2-\varepsilon^2\rho_0^2}
\leq \frac{4\varepsilon\rho_0}{R^2}
\end{align}
hold and we have 
\begin{align}\label{eq_epsT0}
2\varepsilon\rho_0 \left( 1 + \frac{2}{R^2} g\big(u_0(z_r)\big) \right)
< \frac{T_0K}{4R}.
\end{align}
With these preliminary considerations and $a(\rho^\varepsilon_0)(z_l) = a(\rho^\varepsilon_0)(z_r)=1$, we can compute
\begin{align*}
-\rho^\varepsilon(T_0,z_l)
&= - \left( \rho^\varepsilon_0 + \int_0^{T_0} \partial_t \rho^\varepsilon(t,z_l) \, \mathrm{d}t \right) \\
&\stackrel{\eqref{eq_dtrho}}{\geq} -\rho^\varepsilon_0 -\frac{T_0K}{8R} -T_0\partial_t \rho^\varepsilon(0,z_l) \\
&= -\varepsilon\rho_0 -\frac{T_0K}{8R} +T_0g\big(u_0(z_l)\big)a(\rho^\varepsilon_0)(z_l)\big(-H(\rho^\varepsilon_0)(z_l)\big) \\
&\stackrel{\eqref{eq_g}}{=} -\varepsilon\rho_0 -\frac{T_0K}{8R} +T_0K\big(-H(\rho^\varepsilon_0)(z_l)\big) +T_0g\big(u_0(z_r)\big)\big(-H(\rho^\varepsilon_0)(z_l)\big) \\
&\stackrel{\eqref{eq_H}}{\geq} -\varepsilon\rho_0 -\frac{T_0K}{8R} +\frac{T_0K}{2R} -\frac{4\varepsilon\rho_0T_0}{R^2}g\big(u_0(z_r)\big) +T_0g\big(u_0(z_r)\big)H(\rho^\varepsilon_0)(z_r) \\
&\geq -\varepsilon\rho_0 +\frac{3T_0K}{8R} -\frac{4\varepsilon\rho_0}{R^2}g\big(u_0(z_r)\big) +T_0\partial_t\rho^\varepsilon(0,z_r) \\
&\stackrel{\eqref{eq_dtrho}}{\geq} -\varepsilon\rho_0 +\frac{3T_0K}{8R} -\frac{4\varepsilon\rho_0}{R^2}g\big(u_0(z_r)\big) -\frac{T_0K}{8R} + \int_0^{T_0} \partial_t\rho^\varepsilon(t,z_r) \mathrm{d}t \\
&= -2\varepsilon\rho_0 +\frac{T_0K}{4R} -\frac{4\varepsilon\rho_0}{R^2}g\big(u_0(z_r)\big) + \left( \rho^\varepsilon_0 + \int_0^{T_0} \partial_t\rho^\varepsilon(t,z_r) \mathrm{d}t \right) \\
&= -2\varepsilon\rho_0 \left( 1 + \frac{2}{R^2}g\big(u_0(z_r)\big) \right) +\frac{T_0K}{4R} + \rho^\varepsilon(T_0,z_r) \\
&\stackrel{\eqref{eq_epsT0}}{>} \rho^\varepsilon(T_0,z_r).
\end{align*}
By \eqref{Eq_Intersection}, the evolution of $\Gamma_0^\varepsilon$ hence developed a self-intersection. \qedhere
\end{enumerate}
\end{proof}

\subsection{Properties of the Concentration}

In this section we discuss that the so called ``concentration'' $c: \Gamma \rightarrow \R$ really satisfies the most important properties of a physical concentration. First, the concentration should describe the distribution of a quantity whose mass is conserved. Second, the concentration should always be non-negative. We will analyze these features in the setting of Assumptions \ref{ass_concentration}. 

\begin{theorem}[Conservation of Mass]\label{Massenerhaltung} $\phantom{x}$ \\
Suppose Assumptions \ref{ass_concentration} are valid. There exists a constant $m \in \R$ (specifying the mass of the quantity whose concentration is described by the function $c$) such that 
\begin{align*}
\int_{M} c(t,p) \, \mathrm{d}\mathcal{H}^d(p) = m
\end{align*}
holds for all $t \in [0,T]$. 
\end{theorem}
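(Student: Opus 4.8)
The plan is to differentiate the total mass $t \mapsto \int_M c(t,\cdot)\,\mathrm{d}\mathcal{H}^d$ in time, show that this derivative vanishes identically on $[0,T]$, and then simply set $m \defr \int_M c(0,\cdot)\,\mathrm{d}\mathcal{H}^d$. Recall that for the immersed hypersurface $\Gamma$ all such integrals are understood as integrals over the reference surface $M$ equipped with the pulled-back volume element, so the transport theorem and Gauß' theorem apply patchwise in the usual way.

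First I would invoke the transport theorem (Proposition~\ref{transporttheorem}) for the $C^1$-$\phantom{.}C^2$-evolving immersed closed hypersurface $\Gamma$ and the function $c \in C^1\big([0,T],C^0(M)\big) \cap C^0\big([0,T],C^2(M)\big)$, whose regularity matches the hypotheses of that result. This yields
\[
\frac{\mathrm{d}}{\mathrm{d}t} \int_M c \, \mathrm{d}\mathcal{H}^d = \int_M \partial^\square c - cHV \, \mathrm{d}\mathcal{H}^d .
\]
Next I would substitute the diffusion equation \eqref{eq_Intro_GLS2}, namely $\partial^\square c = \Delta_\Gamma\big(G'(c)\big) + cHV$, under the integral. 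The two terms $cHV$ cancel, leaving $\int_M \Delta_\Gamma\big(G'(c)\big)\,\mathrm{d}\mathcal{H}^d$.

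Then I would apply Gauß' theorem on closed hypersurfaces (Proposition~\ref{partialintegration}): since $G \in C^3(\R)$ and $c \in C^0\big([0,T],C^2(M)\big)$, the composition $G'(c)$ lies in $C^0\big([0,T],C^2(M)\big)$, so $\Delta_\Gamma\big(G'(c)\big) = \Div_\Gamma\!\big(\nabla_\Gamma G'(c)\big)$ is a continuous surface divergence and its integral over the closed surface $\Gamma(t)$ vanishes for every $t \in [0,T]$. Hence $\frac{\mathrm{d}}{\mathrm{d}t}\int_M c\,\mathrm{d}\mathcal{H}^d \equiv 0$, and since $t \mapsto \int_M c\,\mathrm{d}\mathcal{H}^d$ is continuous (indeed $C^1$), it is constant; we define $m$ to be this value.

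I do not anticipate any genuine obstacle. The only points deserving care are (i) verifying that the regularity demanded in Assumptions~\ref{ass_concentration} really is enough to invoke the transport theorem and Gauß' theorem in the immersed setting, and (ii) making the cancellation of the $cHV$ terms explicit — this is precisely the reason the $H^{-1}$-type source term $cHV$ in \eqref{eq_Intro_GLS2} encodes mass conservation, exactly as foreshadowed by the constraint $\int_\Sigma w - cHV\,\mathrm{d}\mathcal{H}^d = 0$ defining the tangent space $T_{(\Sigma,c)}M^m$ in Section~\ref{Sec_GradientFlow}.
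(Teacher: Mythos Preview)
Your proposal is correct and follows essentially the same argument as the paper: apply the transport theorem to obtain $\frac{\mathrm{d}}{\mathrm{d}t}\int_M c\,\mathrm{d}\mathcal{H}^d = \int_M \partial^\square c - cHV\,\mathrm{d}\mathcal{H}^d$, substitute \eqref{eq_Intro_GLS2} so that the $cHV$ terms cancel, and then use Gau{\ss}' theorem on the closed hypersurface to conclude that $\int_M \Delta_\Gamma\big(G'(c)\big)\,\mathrm{d}\mathcal{H}^d = 0$. The paper's proof is just the compressed one-line version of this; your added regularity checks and the remark linking the cancellation to the tangent-space constraint in Section~\ref{Sec_GradientFlow} are accurate but not needed for the argument.
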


\begin{proof}
With the help of the transport theorem (Proposition \ref{transporttheorem}) and Gauß' theorem on closed hypersurfaces (Proposition \ref{partialintegration}), we have
\begin{align*}
\frac{\mathrm{d}}{\mathrm{d}t} \int_{M} c \, \mathrm{d}\mathcal{H}^d 
= \int_{M} \partial^\square c - cHV \, \mathrm{d}\mathcal{H}^d
= \int_{M} \Delta_{\Gamma} \big(G'(c)\big) \, \mathrm{d}\mathcal{H}^d 
= 0. & \qedhere
\end{align*}
\end{proof}

A concentration always is non-negative. Therefore, we show in the following theorem that non-negativity of the concentration is conserved. Even more, if an initially non-negative concentration is not the zero-function, then it instantly turns strictly positive. 

\begin{theorem}[Positivity of the Concentration]\label{concentration_pos} $\phantom{x}$ \\
Suppose that Assumptions \ref{ass_concentration} are valid with $G''>0$. 
\begin{enumerate}
\item[(i)]
Let $c(0) \geq 0$ on $M$. Then $c(t) \geq 0$ holds on $M$ for all $t \in [0,T]$.
\item[(ii)]
Let $c(0) \geq 0$ on $M$ with $c(0) \not\equiv 0$. Then $c(t)>0$ holds on $M$ for all $t \in (0,T]$.
\end{enumerate}
\end{theorem}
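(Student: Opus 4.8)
The plan is to recast the concentration equation \eqref{eq_Intro_GLS2} into the linear form of Assumptions \ref{ass_maxprinzip} and then apply the maximum principle Proposition \ref{MaxPrinzip_Folg} to $w \defr -c$, exactly as was done for $w = -H$ in the proof of Theorem \ref{MeanConvex_H}. First I would use the chain rule for the Laplace--Beltrami operator,
\begin{align*}
\Delta_\Gamma\big(G'(c)\big) = G''(c)\,\Delta_\Gamma c + G'''(c)\,|\nabla_\Gamma c|^2,
\end{align*}
together with the geometric equation \eqref{eq_Intro_GLS1}, which turns the source term into $cHV = g(c)H^2 c$. Substituting both into \eqref{eq_Intro_GLS2} shows that $c$ satisfies
\begin{align*}
-\partial^\square c + A\colon D_\Gamma^2 c + B\cdot\nabla_\Gamma c + Cc = 0 \quad\text{on }[0,T]\times M,
\end{align*}
where $A \defr G''(c)\Id$, $B \defr G'''(c)\nabla_\Gamma c$ and $C \defr g(c)H^2$, using $A\colon D_\Gamma^2 c = G''(c)\Delta_\Gamma c$ and $B\cdot\nabla_\Gamma c = G'''(c)|\nabla_\Gamma c|^2$.

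Next I would verify the hypotheses of Assumptions \ref{ass_maxprinzip}. Continuity of $A$, $B$ and $C$ follows from $G \in C^3(\R)$, from $c \in C^0\big([0,T],C^2(M)\big)$ (so that $\nabla_\Gamma c$ is continuous on $[0,T]\times M$) and from the regularity of $H$; $A$ is clearly symmetric, and it is positive definite on all of $[0,T]\times M$ precisely because of the assumption $G''>0$, since $\xi^\top A \xi = G''(c)|\xi|^2$. Note that the zeroth-order coefficient $C = g(c)H^2$ carries no sign in general, so the version of the maximum principle in Proposition \ref{MaxPrinzip_Folg}, which imposes no condition on $C$, is exactly what is required. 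As $\mathcal{L}$ is linear, the identity above gives $\mathcal{L}w = -\mathcal{L}c = 0 \ge 0$ on $[0,T]\times M$, and $w(0) = -c(0) \le 0$ on $M$ by hypothesis.

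Proposition \ref{MaxPrinzip_Folg} then yields $w(t) \le 0$, i.e.\ $c(t) \ge 0$, for all $t \in [0,T]$, which is assertion (i). For (ii), the proposition additionally provides $t_0 \in [0,T]$ with $w \equiv 0$ on $(0,t_0]$ and $w < 0$ on $(t_0,T]$, and it remains to exclude $t_0 > 0$: if $t_0 > 0$, then since $t \mapsto w(t)$ is continuous on $[0,T]$ with values in $C^0(M)$, letting $t \searrow 0$ forces $w(0) \equiv 0$, i.e.\ $c(0) \equiv 0$, contradicting $c(0) \not\equiv 0$. Hence $t_0 = 0$, so $w(t) < 0$, i.e.\ $c(t) > 0$ on $M$, for all $t \in (0,T]$.

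The argument is essentially routine once the equation is linearized; the one point demanding care is the bookkeeping that rewrites the genuinely nonlinear term $G'''(c)|\nabla_\Gamma c|^2$ as an admissible first-order term $B\cdot\nabla_\Gamma c$ with continuous (frozen) coefficient $B$, together with the observation that one must invoke a maximum principle free of sign conditions on the zeroth-order term, so that no further structural assumption (in particular not $g>0$) is needed beyond $G''>0$.
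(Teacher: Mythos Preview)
Your proof is correct and follows essentially the same route as the paper: linearize the diffusion equation via the chain rule, set $A=G''(c)\Id$, $B=G'''(c)\nabla_\Gamma c$, $C=g(c)H^2$, apply Proposition~\ref{MaxPrinzip_Folg} to $w=-c$, and read off (i) and the dichotomy for (ii). The only difference is how you rule out $t_0>0$ in (ii): the paper invokes mass conservation (Theorem~\ref{Massenerhaltung}) to conclude $c(t)\not\equiv 0$ for all $t$, whereas you use the time-continuity of $c$ at $t=0$ to force $c(0)\equiv 0$ from $c\equiv 0$ on $(0,t_0]$. Both arguments are valid; yours is slightly more self-contained since it does not appeal to another theorem.
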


\begin{proof}
On account of \eqref{eq_Intro_GLS1}, we have
\begin{align*}
\Delta_\Gamma G'(c) + cVH
&= A : D_\Gamma^2 c + B \cdot \nabla_\Gamma c + Cc
\end{align*}
with 
\begin{align*}
A \defr G''(c) \Id, \phantom{xx} 
B \defr G'''(c) \nabla_\Gamma c \phantom{xx} \text{and} \phantom{xx}
C \defr g(c)H^2.
\end{align*}
The assumptions and $H \in C^0\big([0,T] \times M\big)$ by Definition \ref{mc_def} imply continuity of the functions $A: [0,T] \times M \rightarrow \R^{(d+1) \times (d+1)}$, $B: [0,T] \times M \rightarrow \R^{d+1}$ and $C: [0,T] \times M \rightarrow \R$. Moreover, the matrix $A$ clearly is symmetric and, due to $G''>0$, also positive definite on $[0,T] \times M$. 
With this, we define $w \defr -c$ as well as
\begin{align*}
\mathcal{L}w \defr -\partial^\square w + A : D_\Gamma^2 w + B \cdot \nabla_\Gamma w + Cw.
\end{align*}
In particular, Assumptions \ref{ass_maxprinzip} are satisfied. We have 
\begin{align*}
\mathcal{L}w 
= -\mathcal{L}c
= \partial^\square c - \big( \Delta_\Gamma G'(c) + cVH \big)
= 0 \text{ on } [0,T] \times M
\end{align*}
and $w(0) = -c(0) \leq 0$ on $M$. With Proposition  \ref{MaxPrinzip_Folg}, $c(t)=-w(t) \geq 0$ follows on $M$ for all $t \in [0,T]$ and there exists $t_0 \in [0,T]$ with 
\begin{align*}
c(t) = -w(t) = 0 \text{ on } M \text{ for all } t \in (0,t_0] 
\phantom{x} \text{ and } \phantom{x}
c(t) = -w(t) > 0 \text{ on } M \text{ for all } t \in (t_0,T].
\end{align*}
Under the assumptions of (ii), we have $c(0) \not\equiv 0$. By Theorem \ref{Massenerhaltung}, thus $c(t) \equiv 0$ can not hold for any $t>0$. Consequently, we then have $t_0=0$ and $c(t)>0$ follows on $M$ for all $t \in (0,T]$.
\end{proof}

The concentration does not only stay non-negative, but the minimal concentration even increases monotonically. Additionally, if the hypersurface is mean convex, the increase of the minimal concentration even is strictly monotonic. 

\begin{theorem}[Growth of the Minimal Concentration] $\phantom{x}$ \\
Suppose Assumptions \ref{ass_concentration} are valid with $G''>0$ and $g>0$. Let $c_{\min}: [0,T] \rightarrow \R, \, c_{\min}(t) \defr \min_{M} c(t,\cdot)$ be the minimum function of $c$ on $M$.
\begin{enumerate}
\item[(i)]
Let $c(0) \geq 0$. Then, $c_{\min}$ is monotonically increasing.
\item[(ii)]
Let $\Gamma$ be of regularity $(C^2$-$C^2) \cap (C^1$-$C^4)$. Furthermore, let $H(0) \geq 0$, $c(0) \geq 0$ and $c(0) \not\equiv 0$. Then, $c_{\min}$ is strictly increasing. 
\end{enumerate}
\end{theorem}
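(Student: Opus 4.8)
The plan is to differentiate the minimum function $c_{\min}$ and to insert the evolution equation for $c$ at a point where the minimum is attained. From the proof of Theorem~\ref{concentration_pos} we know that, under Assumptions~\ref{ass_concentration} and because of $V=g(c)H$,
\begin{align*}
\partial^\square c = G''(c)\,\Delta_\Gamma c + G'''(c)\,|\nabla_\Gamma c|^2 + g(c)\,H^2 c \qquad\text{on } [0,T]\times M .
\end{align*}
Since $c\in C^1\big([0,T],C^0(M)\big)$, the map $t\mapsto \partial_t c(t,\cdot)$ is continuous with values in $C^0(M)$, hence bounded; therefore $c_{\min}$ is Lipschitz on $[0,T]$, in particular absolutely continuous and differentiable almost everywhere.

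Next I would use the standard ``Hamilton trick'' on the compact, boundaryless reference manifold $M$: if $c_{\min}$ is differentiable at $t$ and $p_t\in M$ realizes $\min_M c(t,\cdot)$, then $s\mapsto c(s,p_t)-c_{\min}(s)$ is nonnegative, vanishes at $s=t$ and is differentiable there, so its derivative at $t$ is zero, i.e. $c_{\min}'(t)=\partial_t c(t,p_t)$. Because $M$ has no boundary, $p_t$ is an interior minimizer of $c(t,\cdot)$, so unwinding the patchwise definitions of the surface operators from Section~\ref{GeometricSetting} gives $\nabla_\Gamma c(t,p_t)=0$ and $\Delta_\Gamma c(t,p_t)\ge 0$. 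The vanishing of $\nabla_\Gamma c(t,p_t)$ and the definition of $\partial^\square$ yield $\partial_t c(t,p_t)=\partial^\square c(t,p_t)$, so the evolution equation above gives, for almost every $t\in[0,T]$,
\begin{align*}
c_{\min}'(t) = G''\big(c_{\min}(t)\big)\,\Delta_\Gamma c(t,p_t) + g\big(c_{\min}(t)\big)\,H(t,p_t)^2\,c_{\min}(t) .
\end{align*}

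For part~(i), Theorem~\ref{concentration_pos}(i) gives $c(t)\ge 0$ on $M$, hence $c_{\min}(t)\ge 0$; combined with $G''>0$, $g>0$ and $\Delta_\Gamma c(t,p_t)\ge 0$ the right-hand side above is nonnegative, so $c_{\min}'\ge 0$ almost everywhere, and integrating shows $c_{\min}$ is monotonically increasing. For part~(ii), the additional regularity $(C^2\text{-}C^2)\cap(C^1\text{-}C^4)$ of $\Gamma$ together with $g\in C^2(\R)$ (which follows from $G\in C^3(\R)$), $g>0$ and $c\in C^0\big([0,T],C^2(M)\big)$ places us in the setting of Remark~\ref{MeanConvex_Lemma_skalierterMCF}, so Theorem~\ref{MeanConvex_H} applies and $H(0)\ge 0$ yields $H(t)>0$ on $M$ for all $t\in(0,T]$; moreover Theorem~\ref{concentration_pos}(ii) gives $c(t)>0$ on $M$, hence $c_{\min}(t)>0$, for all $t\in(0,T]$. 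Dropping again the nonnegative term $G''(c_{\min})\Delta_\Gamma c(t,p_t)$, we obtain $c_{\min}'(t)\ge g\big(c_{\min}(t)\big)\,H(t,p_t)^2\,c_{\min}(t)>0$ for almost every $t\in(0,T]$; integrating over subintervals of $(0,T]$ shows $c_{\min}$ is strictly increasing there, and combining with part~(i) on an initial interval this gives strict monotonicity on all of $[0,T]$.

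The only delicate point is the rigorous justification of the Hamilton trick and the geometric translation it requires: that $c_{\min}$ is almost everywhere differentiable with $c_{\min}'(t)=\partial_t c(t,p_t)$ for a minimizing point $p_t$, and that an interior minimum of $c(t,\cdot)$ on $M$ forces $\nabla_\Gamma c(t,p_t)=0$ and $\Delta_\Gamma c(t,p_t)\ge 0$ in terms of the pullback definitions used here. Once this lemma is in place, the rest is only bookkeeping of the sign conditions $G''>0$, $g>0$, of the positivity of $c$ and $H$ established earlier, and of the integration of an almost everywhere differentiable Lipschitz function.
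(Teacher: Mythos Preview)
Your argument is correct and follows essentially the same route as the paper: apply Hamilton's trick to get $c_{\min}'(t)=\partial_t c(t,p_t)=\partial^\square c(t,p_t)$ at a minimizing point, expand $\partial^\square c$ via the evolution equation, use $\nabla_\Gamma c(t,p_t)=0$, $\Delta_\Gamma c(t,p_t)\ge0$, and then invoke Theorem~\ref{concentration_pos} for $c>0$ and Theorem~\ref{MeanConvex_H}/Remark~\ref{MeanConvex_Lemma_skalierterMCF} for $H>0$ in part~(ii). The ``delicate point'' you flag is handled in the paper by citing Hamilton's trick from \cite[Lemma~2.1.3]{Mantegazza} for the differentiation of $c_{\min}$, and Lemma~\ref{surface_Hessian_Extremepoints} for the sign conditions on $\nabla_\Gamma c$ and $\Delta_\Gamma c$ at an interior minimum.
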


\begin{proof}
With Hamilton's trick (see \cite[Lemma 2.1.3]{Mantegazza}), $c_{\min}: (0,T) \rightarrow \R$ is well-defined and Lip\-schitz continuous. In particular, it is differentiable almost everywhere, and in every time $t \in (0,T)$ in which $c_{\min}$ is differentiable, we have
\begin{align*}
\partial_t c_{\min \, |t} = \partial_t c_{|(t,p)}
\end{align*}
where $p \in M$ is an arbitrary point with $c(t,p) = c_{\min}(t)$. For such a point $p$, we have $\nabla_\Gamma c(t,p) = 0$ and $\Delta_\Gamma c(t,p) \geq 0$ by Lemma \ref{surface_Hessian_Extremepoints} and then
\begin{align*}
\partial^\square c_{|(t,p)}
= \partial^\circ c_{|(t,p)} - V^{\text{tot}}_{\Gamma \, |(t,p)} \cdot \nabla_\Gamma c_{|(t,p)}
= \partial^\circ c_{|(t,p)}
= \partial_t c_{|(t,p)}
\end{align*}
follows with Definition \ref{timederivatives_Def}. For every point $p \in M$ with $c(t,p) = c_{\min}(t)$ we thus have
\begin{align*}
\partial_t c_{\min \, |t} 
&= \partial^\square c_{|(t,p)} 
= \Delta_\Gamma G'(c) + cVH_{\, |(t,p)} \\
&= G''(c) \Delta_\Gamma c + G'''(c) \big| \nabla_\Gamma c \big|^2 + g(c)H^2c_{\, |(t,p)} 
\geq g(c)H^2c_{\, |(t,p)}. 
\end{align*}
The assumptions in (i) and Theorem \ref{concentration_pos}(i) imply $c(t) \geq 0$ for all $t \in [0,T]$ and therefore $\partial_t c_{\min} \geq 0$ follows almost everywhere. Hence, $c_{\min}$ is monotonically increasing. \\
The assumptions in (ii) as well as Section \ref{Chap_MeanConvex} and Theorem \ref{concentration_pos}(ii) yield $H(t)>0$ and $c(t)>0$ for every $t \in (0,T]$ such that $\partial_t c_{\min} > 0$ follows almost everywhere. Hence, $c_{\min}$ is strictly increasing. 
\end{proof}

\appendix
\section{Appendix}

\subsection{Basic Results for Hypersurfaces}

We gather some well-known results for hypersurfaces.

\begin{lemma}[Surface Derivatives in Extreme Points]\label{surface_Hessian_Extremepoints} $\phantom{x}$ \\
Let $\Sigma=\theta(M)$ be a $C^2$-immersed hypersurface and let $f \in C^2(M,\R)$ have a maximum in $p \in M$. Then, we have 
\begin{align*}
\nabla_\Sigma f(p) = 0
\phantom{xx} \text{ and } \phantom{xx}
D_\Sigma^2 f(p)\leq 0.
\end{align*}
\end{lemma}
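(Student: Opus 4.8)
The plan is to localize to an embedded patch and then reduce both assertions to the elementary first- and second-derivative tests for functions on open subsets of Euclidean space. Since $\nabla_\Sigma$ and $D_\Sigma^2$ are defined patchwise via the pullback onto $M$, it suffices to fix an open neighbourhood $U\subset M$ of $p$ so small that $N\defr\theta(U)\subset\R^{d+1}$ is a $C^2$-embedded hypersurface and $\theta_{|U}$ an embedding, and to prove the two claims for $\tilde f\defr f\circ(\theta_{|U})^{-1}\in C^2(N)$ at $q\defr\theta(p)$, which is a (local) maximum of $\tilde f$ on $N$. For the gradient I would argue with curves: given $v\in T_qN$, choose a $C^1$-curve $c\colon(-\varepsilon,\varepsilon)\to N$ with $c(0)=q$ and $\dot c(0)=v$; then $s\mapsto\tilde f(c(s))$ has a maximum at $s=0$, whence $0=\frac{\mathrm d}{\mathrm ds}\big|_{s=0}\tilde f(c(s))=\nabla_N\tilde f(q)\cdot v$. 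As $v\in T_qN$ was arbitrary and $\nabla_N\tilde f(q)\in T_qN$, this forces $\nabla_N\tilde f(q)=0$, i.e.\ $\nabla_\Sigma f(p)=0$.

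For the Hessian I would pass to a graph representation. After a rigid motion of $\R^{d+1}$ we may assume $q=0$ and $T_qN=\R^d\times\{0\}$, so that near $q$ one has $N=\{(x,\phi(x)):x\in V\}$ for an open $V\ni 0$ and some $\phi\in C^2(V)$ with $\phi(0)=0$ and $D\phi(0)=0$, and in particular $\nu(q)=\pm e_{d+1}$. With $h\defr\tilde f(\cdot,\phi(\cdot))\in C^2(V)$, the origin is an interior maximum of $h$, so $Dh(0)=0$ and $D^2h(0)\leq 0$. Now insert the ambient extension $\bar f(x,y)\defr h(x)$ of $\tilde f$ and use the standard tangential-calculus representations $[\nabla_\Sigma f]_i=\sum_k P_{ik}\partial_k\bar f$ and $[D_\Sigma^2 f]_{ij}=\sum_l P_{jl}\,\partial_l\!\big(\sum_k P_{ik}\partial_k\bar f\big)$ of the surface gradient and surface Hessian through a neighbourhood extension, with $P=\Id-\nu\nu^\top$. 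Evaluating at $q=0$, where $\nabla\bar f(0)=(Dh(0),0)=0$, $P(0)=\operatorname{diag}(1,\dots,1,0)$ and $\partial_{d+1}\bar f\equiv 0$, every term carrying a first derivative of $\bar f$ or a derivative of $P$ vanishes, leaving $[D_\Sigma^2 f]_{ij}(p)=\partial_i\partial_j h(0)$ for $i,j\leq d$ and $[D_\Sigma^2 f]_{ij}(p)=0$ whenever $i=d+1$ or $j=d+1$; in these coordinates
\[
D_\Sigma^2 f(p)=\begin{pmatrix} D^2h(0) & 0\\ 0 & 0\end{pmatrix}\leq 0,
\]
which is the claim.

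The derivative tests are immediate, so the part that needs genuine care is the final bookkeeping that converts the extrinsic definition $[D_\Sigma^2 f]_{ij}=[\nabla_\Sigma([\nabla_\Sigma f]_i)]_j$ into the ordinary Hessian $D^2h(0)$. The key simplification is that at the maximum $p$ the factor $\nabla\bar f$ already vanishes, so all derivatives of the normal — equivalently, all second-fundamental-form contributions that would otherwise make $D_\Sigma^2 f$ neither symmetric nor tangential — drop out; this is also precisely what makes $D_\Sigma^2 f(p)$ symmetric with $\nu(p)$ in its kernel, so that $D_\Sigma^2 f(p)\leq 0$ holds as a quadratic form on all of $\R^{d+1}$ and not merely on $T_pM$. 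If one prefers not to invoke the extension formula, the same conclusion can be reached by carrying the curve argument to second order: for $c(s)=(sw,\phi(sw))$ one obtains $w^\top D^2h(0)\,w=\dot c(0)^\top D_\Sigma^2 f(p)\,\dot c(0)+\ddot c(0)\cdot\nabla_\Sigma f(p)$ with the last term vanishing, and a general $v\in\R^{d+1}$ is handled by splitting off its normal part and using the same critical-point cancellations.
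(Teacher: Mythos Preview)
Your argument is correct and follows the standard route: localize to an embedded patch, use curves for the gradient, and for the Hessian pass to a graph representation so that the surface Hessian at a critical point reduces to the ordinary Euclidean Hessian of the graph function, with the curvature terms killed by the vanishing first derivative. The paper itself does not supply a proof of this lemma but only cites the dissertation \cite{Buerger}; your write-up is precisely the kind of elementary verification one would expect there, so there is nothing further to compare.
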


A proof of these intuitive statements can be found in \cite[Lemma 2.36]{Buerger}.

\begin{proposition}[Closed Surfaces have non-vanishing Mean Curvature]\label{mc_zero} $\phantom{x}$ \\
Let $\Sigma = \theta(M) \subset \R^{d+1}$ be a $C^2$-immersed closed hypersurface. The mean curvature on $\Sigma$ is not the zero function.
\end{proposition}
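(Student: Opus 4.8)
The plan is to use the classical enclosing-sphere argument. Since $M$ is compact, the function $f\colon M \to \R$, $f(p) \defr |\theta(p)|^2$, is continuous and attains its maximum at some $p_0 \in M$; set $R \defr |\theta(p_0)|$. Geometrically, $\theta(M)$ is then contained in the closed ball $\overline{B_R(0)}$ and is touched from outside by the sphere $\partial B_R(0)$ at $\theta(p_0)$, which forces the mean curvature there to be nonzero. First one notes $R>0$: otherwise $\theta(M)=\{0\}$, contradicting that $\theta$ is an immersion of a $d$-dimensional manifold with $d \geq 1$. Also, since $\theta \in C^2(M,\R^{d+1})$, we have $f \in C^2(M,\R)$, so $\nabla_\Sigma f$, $D_\Sigma^2 f$ and $\Delta_\Sigma f$ are well-defined on embedded patches as in Section \ref{GeometricSetting}.

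Next I would compute these quantities on an embedded patch around $p_0$. The surface gradient of $f$ equals twice the tangential part of the position vector, $\nabla_\Sigma f = 2\,\theta^\top$ with $\theta^\top \defr \theta - \langle\theta,\nu\rangle\nu$. For the Laplace--Beltrami operator one uses the identity $\Delta_\Sigma \theta_i = H\nu_i$, which follows from $\nabla_\Sigma \theta_i = e_i - \nu_i\nu$ and $\Div_\Sigma(\nu_i\nu) = \nu_i\,\Div_\Sigma \nu = -\nu_i H$ together with Definition \ref{mc_def} ($H = -\Div_\Sigma\nu$). Combined with $\sum_i |\nabla_\Sigma\theta_i|^2 = \trace(P_T) = d$, where $P_T$ is the tangential projection, this gives
\begin{align*}
\Delta_\Sigma f = 2\langle \Delta_\Sigma\theta, \theta\rangle + 2\sum_i |\nabla_\Sigma\theta_i|^2 = 2H\,\langle\nu,\theta\rangle + 2d \quad \text{ on the patch.}
\end{align*}
Here $\Delta_\Sigma f$ is the trace of the surface Hessian $D_\Sigma^2 f$.

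Finally I would invoke the extremum condition. At the maximum point $p_0$, Lemma \ref{surface_Hessian_Extremepoints} yields $\nabla_\Sigma f(p_0) = 0$ and $D_\Sigma^2 f(p_0) \leq 0$, hence $\Delta_\Sigma f(p_0) = \trace D_\Sigma^2 f(p_0) \leq 0$. From $\nabla_\Sigma f(p_0) = 0$, i.e. $\theta(p_0)^\top = 0$, we get $\theta(p_0) = \langle\theta(p_0),\nu(p_0)\rangle\,\nu(p_0)$ and therefore $\langle\theta(p_0),\nu(p_0)\rangle = \pm R$ with $R>0$. Substituting into the formula above gives $\pm 2R\,H(p_0) + 2d \leq 0$, so $\pm H(p_0) \leq -d/R < 0$; in particular $H(p_0) \neq 0$, so the mean curvature is not the zero function. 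I do not expect a serious obstacle here: the argument is entirely pointwise, and the only points requiring care are respecting this paper's sign convention $H = -\Div_\Sigma\nu$ (so that the identity reads $\Delta_\Sigma\theta = H\nu$, not $-H\nu$) and the fact that $f$ is defined on the reference surface $M$ with all surface-differential operators interpreted on embedded patches, so that Lemma \ref{surface_Hessian_Extremepoints} applies verbatim at $p_0$.
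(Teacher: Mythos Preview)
Your proof is correct and is exactly the classical enclosing-sphere argument that the paper alludes to when it says the embedded case ``cannot have vanishing mean curvature'' and that ``with an analogous argumentation, the same statement holds in the immersed case'' (citing \cite[Proposition 2.44]{Buerger}). The paper does not spell out its own proof here, but your maximum-of-$|\theta|^2$ computation, combined with Lemma~\ref{surface_Hessian_Extremepoints} and the identity $\Delta_\Sigma\theta = H\nu$ in the paper's sign convention, is precisely the standard route and goes through without issue.
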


A closed, embedded hypersurface cannot have vanishing mean curvature. With an ana\-lo\-gous argumentation, the same statement holds in the immersed case (see \cite[Proposition 2.44]{Buerger}).

\begin{proposition}[Normal Time Derivative of the Mean Curvature]\label{mc_formulas} $\phantom{x}$ \\
We assume $\Gamma$ to be a \mbox{$(C^2$-$\phantom{.}C^2) \cap (C^1$-$\phantom{.}C^4)$}-evolving immersed hypersurface with unit normal $\nu$, mean curvature $H$ and normal velocity $V$. Then, 
\begin{align*}
\partial^\square H = \Delta_\Gamma V + V \big| \nabla_\Gamma \nu \big|^2 
\end{align*}
holds on $\Gamma(t)$ for every $t \in [0,T]$. 
\end{proposition}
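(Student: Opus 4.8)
The plan is to reduce the identity to a purely local computation on an embedded patch, where $H$, $\Delta_\Gamma$ and $\partial^\square$ all make sense, and then exploit that both sides of the claimed formula depend only on the geometric evolving hypersurface $\{\Gamma(t)\}$ and not on the chosen parameterization (this is precisely the content of the remarks following Definitions \ref{velocity_Def} and \ref{timederivatives_Def}). Concretely, I would first replace the given parameterization locally by a \emph{normal} one, i.e. one with total velocity $V^{\mathrm{tot}}=V\nu$; such a reparameterization exists locally by solving the obvious ODE along normals, and for it one has $V^{\mathrm{tot}}\cdot\nabla_\Gamma H=V\,\nu\cdot\nabla_\Gamma H=0$ since $\nabla_\Gamma H$ is tangential, hence $\partial^\square H=\partial_t H$. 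It therefore suffices to prove $\partial_t H=\Delta_\Gamma V+V|\nabla_\Gamma\nu|^2$ for the normal flow $\partial_t\theta=V\nu$. I would fix local coordinates $x^1,\dots,x^d$, write $g_{ij}=\partial_i\theta\cdot\partial_j\theta$ for the induced metric and $h_{ij}$ for the second fundamental form, normalized so that $H=g^{ij}h_{ij}$ is consistent with the sign convention $H=-\Div_\Gamma\nu$ of Definition \ref{mc_def}; recall that then $|\nabla_\Gamma\nu|^2=g^{ik}g^{jl}h_{ij}h_{kl}$ is the squared norm of the second fundamental form.

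Next I would derive the three standard evolution equations under $\partial_t\theta=V\nu$. Differentiating $g_{ij}=\partial_i\theta\cdot\partial_j\theta$ in $t$, commuting $\partial_t$ with $\partial_i$, and using $\nu\cdot\partial_j\theta=0$ together with the Weingarten relation gives $\partial_t g_{ij}=-2V h_{ij}$, hence $\partial_t g^{ij}=2V\,g^{ik}g^{jl}h_{kl}$. Differentiating $|\nu|^2=1$ shows $\partial_t\nu$ is tangential, and differentiating $\nu\cdot\partial_i\theta=0$ then yields $\partial_t\nu=-\nabla_\Gamma V$. The crucial and most delicate identity is the evolution of $h_{ij}$: starting from $h_{ij}=\nu\cdot\partial_i\partial_j\theta$ (up to sign), differentiating in $t$, commuting the spatial and time derivatives of $\theta$, and re-expressing $\partial_i\partial_j\theta$ through the Gauss--Weingarten formulas, one obtains a Simons-type identity $\partial_t h_{ij}=\nabla_i\nabla_j V - V\,h_{ik}g^{kl}h_{lj}$, where $\nabla$ is the Levi--Civita connection of $g$ and the term $\nabla_i\nabla_j V$ carries the Christoffel symbols. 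This step is the main obstacle: it requires keeping the sign conventions of Definition \ref{mc_def} straight and handling the second covariant derivative and the commutator of $\partial_t$ with $\partial_i$ carefully.

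Finally I would combine these. From $H=g^{ij}h_{ij}$ one gets $\partial_t H=(\partial_t g^{ij})h_{ij}+g^{ij}\partial_t h_{ij}$; substituting $\partial_t g^{ij}=2V g^{ik}g^{jl}h_{kl}$ and the Simons identity, the term $2V g^{ik}g^{jl}h_{kl}h_{ij}$ from the first summand and the term $-V g^{ij}h_{ik}g^{kl}h_{lj}$ from the second combine to $V\,g^{ij}h_{ik}g^{kl}h_{lj}=V|\nabla_\Gamma\nu|^2$, while $g^{ij}\nabla_i\nabla_j V=\Delta_\Gamma V$ by definition of the Laplace--Beltrami operator; hence $\partial_t H=\Delta_\Gamma V+V|\nabla_\Gamma\nu|^2$. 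Together with $\partial^\square H=\partial_t H$ for the normal parameterization and the parameterization independence of both sides, this proves the identity on each patch and therefore on all of $\Gamma(t)$ for every $t\in[0,T]$; the regularity assumption $(C^2\text{-}C^2)\cap(C^1\text{-}C^4)$ is exactly what makes $H\in C^1\big([0,T],C^0\big)\cap C^0\big([0,T],C^2\big)$ and legitimizes all the derivatives above. Alternatively, one can bypass coordinates entirely by using the identities $\partial^\square\nu=-\nabla_\Gamma V$ and the commutator rules between $\partial^\square$ and $\Div_\Gamma$ applied to $H=-\Div_\Gamma\nu$, which shortcuts the Simons computation if those lemmas are available (cf. \cite{Buerger}).
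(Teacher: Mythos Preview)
Your argument is correct: the reduction to a normal parameterization via the invariance of $\partial^\square$, $V$, $H$ and $\Delta_\Gamma$, the evolution formulas $\partial_t g_{ij}=-2Vh_{ij}$, $\partial_t\nu=-\nabla_\Gamma V$, $\partial_t h_{ij}=\nabla_i\nabla_j V - V\,h_{ik}g^{kl}h_{lj}$, and the trace computation combining to $\Delta_\Gamma V+V|\nabla_\Gamma\nu|^2$ is the classical derivation, and the regularity hypothesis is used exactly as you say.

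There is, however, nothing to compare against in the paper itself: the paper does not give a proof of this proposition but simply refers to \cite[Lemma 39(ii)]{Gpde}. Your proposal thus supplies a self-contained argument where the paper only cites one. The coordinate-based route you take is essentially the standard one (going back to Huisken), while the coordinate-free alternative you mention at the end---computing $\partial^\square(-\Div_\Gamma\nu)$ via $\partial^\square\nu=-\nabla_\Gamma V$ and commutator identities for $\partial^\square$ with surface differential operators---is closer in spirit to how the cited reference organizes the calculation. Both lead to the same identity with comparable effort; the coordinate approach has the advantage of being entirely explicit about the Simons-type term, whereas the operator-commutator approach avoids introducing a normal reparameterization.
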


A proof of this statement can be found in \cite[Lemma 39(ii)]{Gpde}.

\begin{proposition}[Gauß' Theorem on Closed Hypersurfaces]\label{partialintegration} $\phantom{x}$ \\
Let $\Sigma=\theta(M) \subset \R^{d+1}$ be a $C^2$-immersed closed hypersurface and let $F \in C^1(M,\R^{d+1})$ with $F(p) \in T_p \Sigma$ for every $p \in M$ as well as $f \in C^2(M,\R)$ and $g \in C^1(M,\R)$. Then we have 
\begin{align*}
\int_\Sigma \Div_\Sigma F \, \mathrm{d}\mathcal{H}^d = 0
\phantom{xx} \text{ and } \phantom{xx}
\int_\Sigma g \Delta_\Sigma f \, \mathrm{d}\mathcal{H}^d = - \int_\Sigma \nabla_\Sigma g \cdot \nabla_\Sigma f \, \mathrm{d}\mathcal{H}^d.
\end{align*}
\end{proposition}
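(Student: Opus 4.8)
The plan is to deduce both identities from the classical divergence theorem on a compact manifold without boundary, which is a purely local statement once one works on the reference surface. Throughout, recall from Section~\ref{GeometricSetting} that $\int_\Sigma(\,\cdot\,)\,\mathrm{d}\mathcal{H}^d$ means the integral of the corresponding pullback over the compact embedded reference surface $M$, and that $\Div_\Sigma$, $\nabla_\Sigma$ and $\Delta_\Sigma$ are defined patchwise on embedded patches $\theta(U)$.

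First I would localize. Since $M$ is compact, choose finitely many open sets $U_1,\dots,U_N$ covering $M$, each small enough that $\theta(U_j)$ is an embedded patch equipped with a coordinate chart, and a (say $C^1$) partition of unity $\{\chi_j\}_{j=1}^N$ on $M$ subordinate to this cover. Using the Leibniz rule $\Div_\Sigma(\chi_j F)=\chi_j\,\Div_\Sigma F+\nabla_\Sigma\chi_j\cdot F$ together with $\sum_{j}\nabla_\Sigma\chi_j=\nabla_\Sigma\big(\sum_{j}\chi_j\big)=0$, one obtains
\begin{align*}
\int_\Sigma\Div_\Sigma F\,\mathrm{d}\mathcal{H}^d=\sum_{j=1}^{N}\int_\Sigma\Div_\Sigma(\chi_j F)\,\mathrm{d}\mathcal{H}^d,
\end{align*}
so it suffices to prove $\int_\Sigma\Div_\Sigma X\,\mathrm{d}\mathcal{H}^d=0$ for a $C^1$ tangential field $X$ compactly supported inside a single embedded patch. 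On that patch, in local coordinates $x=(x^1,\dots,x^d)$ with induced metric $(g_{kl})$ and $\mathrm{d}\mathcal{H}^d=\sqrt{\det(g_{kl})}\,\mathrm{d}x$, one has the standard identity $\Div_\Sigma X=\tfrac{1}{\sqrt{\det(g_{kl})}}\,\partial_{x^k}\big(\sqrt{\det(g_{kl})}\,X^k\big)$ for the components $X^k$ of $X$ in the coordinate frame, whence
\begin{align*}
\int_\Sigma\Div_\Sigma X\,\mathrm{d}\mathcal{H}^d=\int_{\R^d}\partial_{x^k}\big(\sqrt{\det(g_{kl})}\,X^k\big)\,\mathrm{d}x=0
\end{align*}
by the ordinary divergence theorem for the compactly supported $C^1$ field $x\mapsto\sqrt{\det(g_{kl})}\,X^k(x)$ on $\R^d$. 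This proves the first identity.

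For the second identity I would apply the first part to $F:=g\,\nabla_\Sigma f$, which is a $C^1$ tangential field because $f\in C^2(M,\R)$ gives $\nabla_\Sigma f\in C^1(M,\R^{d+1})$ and $g\in C^1(M,\R)$. The Leibniz rule for the surface divergence yields $\Div_\Sigma(g\,\nabla_\Sigma f)=\nabla_\Sigma g\cdot\nabla_\Sigma f+g\,\Div_\Sigma(\nabla_\Sigma f)=\nabla_\Sigma g\cdot\nabla_\Sigma f+g\,\Delta_\Sigma f$, using $\Delta_\Sigma f=\Div_\Sigma\nabla_\Sigma f$. Integrating over $\Sigma$ and invoking the first identity gives $0=\int_\Sigma\nabla_\Sigma g\cdot\nabla_\Sigma f\,\mathrm{d}\mathcal{H}^d+\int_\Sigma g\,\Delta_\Sigma f\,\mathrm{d}\mathcal{H}^d$, which is the claim.

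The only real subtlety, and hence the step I would spend the most care on, is the passage from the immersed to the embedded setting: one must make sure that the patchwise definitions of $\Div_\Sigma$, $\nabla_\Sigma$ and $\Delta_\Sigma$ are consistent on overlaps so that the localization above is meaningful, and that the Leibniz rules used are genuine local identities, valid on each embedded patch where they are classical. The analytic content — the vanishing of the integral of a compactly supported divergence on $\R^d$ — is elementary; everything else is bookkeeping with the reference surface $M$.
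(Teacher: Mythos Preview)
Your argument is correct and is precisely the standard route the paper has in mind: the paper does not spell out a proof but refers to \cite[Theorem 5.1.7 together with Lemmas 5.1.5 and 5.1.6]{Baer}, which is exactly the partition-of-unity localization to a single chart followed by the coordinate identity $\Div_\Sigma X=\tfrac{1}{\sqrt{\det g}}\,\partial_{x^k}(\sqrt{\det g}\,X^k)$ and the Euclidean divergence theorem, with the second identity obtained from the first via $F=g\,\nabla_\Sigma f$. The only cosmetic point is that the partition of unity should be taken of class $C^2$ (available since $M$ is a $C^2$-embedded closed hypersurface in $\R^{d+1}$) so that the localization also goes through verbatim for the second identity if one prefers to localize there directly; your use of a $C^1$ partition is already sufficient for the first identity as written.
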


This theorem is well-known for embedded hypersurfaces and can be proven with the same arguments as in \cite[Theorem 5.1.7, which relies on Lemmas 5.1.5 and 5.1.6 therein]{Baer} for the immersed case.

\begin{proposition}[Transport Theorem]\label{transporttheorem} $\phantom{x}$ \\
Let $\Gamma$ be a $C^1$-$\phantom{.}C^2$-evolving immersed closed hypersurface with reference surface $M \subset \R^{d+1}$, mean curvature $H$ and normal velocity $V$. For a function $f \in C^1([0,T] \times M)$, we have
\begin{align*}
\frac{\mathrm{d}}{\mathrm{d}t} \int_{\Gamma(t)} f \, \mathrm{d}\mathcal{H}^d
= \int_{\Gamma(t)} \partial^\square f - f H V \, \mathrm{d}\mathcal{H}^d.
\end{align*}
\end{proposition}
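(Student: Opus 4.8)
The plan is to reduce the identity to a computation on the fixed reference surface $M$ in local coordinates, combining the first variation of area with Gauß' theorem on $\Gamma(t)$ (Proposition \ref{partialintegration}). Writing $\Gamma(t) = \theta_t(M)$, the integral $\int_{\Gamma(t)} f\, \mathrm{d}\mathcal{H}^d$ is by definition $\int_M f(t,\cdot)\, \mathrm{d}\mu_{g_t}$, where $g_t \defr \theta_t^*\langle\cdot,\cdot\rangle_{\R^{d+1}}$ is the pulled-back metric on $M$, with $(g_t)_{ij} = \langle\partial_i\theta_t,\partial_j\theta_t\rangle$ in a chart with coordinates $x$ and volume element $\mathrm{d}\mu_{g_t} = \sqrt{\det g_t}\, \mathrm{d}x$. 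I would fix a finite atlas of $M$ together with a time-independent smooth partition of unity $\{\psi\}$ subordinate to it, so that it suffices to differentiate $\int_{\varphi(U)}(\psi f)(t,\cdot)\sqrt{\det g_t}\, \mathrm{d}x$ in a single chart. Differentiation under the integral sign is legitimate by the $C^1$-regularity of $f$ and of $t\mapsto g_t$ and the compactness of $M$, and since the $\psi$'s are independent of $t$ the resulting $\partial_t f$-terms reassemble to $\int_{\Gamma(t)}\partial_t f\, \mathrm{d}\mathcal{H}^d$.

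The geometric core is the identity $\partial_t\sqrt{\det g_t} = \sqrt{\det g_t}\,\Div_\Gamma V^{\text{tot}}$. It follows from Jacobi's formula $\partial_t\sqrt{\det g_t} = \tfrac12\sqrt{\det g_t}\,(g_t)^{ij}\partial_t(g_t)_{ij}$ together with $\partial_t(g_t)_{ij} = \langle\partial_i V^{\text{tot}},\partial_j\theta_t\rangle + \langle\partial_i\theta_t,\partial_j V^{\text{tot}}\rangle$, after recognising the contracted expression as the surface divergence of $V^{\text{tot}} = \partial_t\theta$. Splitting $V^{\text{tot}} = V^{\text{tan}} + V\nu$ into tangential and normal parts and using $\Div_\Gamma(V\nu) = V\,\Div_\Gamma\nu + \nabla_\Gamma V\cdot\nu = -VH$ (by Definition \ref{mc_def}; the last term vanishes since $\nabla_\Gamma V$ is tangential), one obtains $\Div_\Gamma V^{\text{tot}} = \Div_\Gamma V^{\text{tan}} - VH$, hence
\[
\frac{\mathrm{d}}{\mathrm{d}t}\int_{\Gamma(t)} f\, \mathrm{d}\mathcal{H}^d = \int_{\Gamma(t)} \partial_t f + f\,\Div_\Gamma V^{\text{tan}} - fHV\, \mathrm{d}\mathcal{H}^d.
\]

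To finish, I would integrate by parts: the first part of Proposition \ref{partialintegration}, applied to the tangential vector field $fV^{\text{tan}}$, yields $\int_{\Gamma(t)} f\,\Div_\Gamma V^{\text{tan}}\, \mathrm{d}\mathcal{H}^d = -\int_{\Gamma(t)} V^{\text{tan}}\cdot\nabla_\Gamma f\, \mathrm{d}\mathcal{H}^d$, and since $\nabla_\Gamma f$ is tangential this equals $-\int_{\Gamma(t)} V^{\text{tot}}\cdot\nabla_\Gamma f\, \mathrm{d}\mathcal{H}^d$. Inserting this into the displayed identity and recalling $\partial^\square f = \partial_t f - V^{\text{tot}}\cdot\nabla_\Gamma f$ from Definition \ref{timederivatives_Def} gives precisely $\frac{\mathrm{d}}{\mathrm{d}t}\int_{\Gamma(t)} f\, \mathrm{d}\mathcal{H}^d = \int_{\Gamma(t)} \partial^\square f - fHV\, \mathrm{d}\mathcal{H}^d$.

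The step I expect to be delicate is the low regularity: for a merely $C^1$-$C^2$-evolving hypersurface the total velocity $V^{\text{tot}} = \partial_t\theta$ need not be spatially differentiable, so neither the pointwise formula for $\partial_t\sqrt{\det g_t}$ nor the expression $\Div_\Gamma(fV^{\text{tan}})$ is literally meaningful as written. I would handle this either by first establishing the identity for smoother evolving hypersurfaces, where all of the above derivatives exist classically, and then approximating $\theta$ in the relevant norms while using that both sides depend continuously on $\theta$; or by keeping all spatial derivatives on the $C^2$-map $\theta_t$ and the $C^1$-function $f$ throughout, i.e. rewriting $\int_M f\,(g_t)^{ij}\langle\partial_i V^{\text{tot}},\partial_j\theta_t\rangle\sqrt{\det g_t}\,\mathrm{d}x$ by moving the $x$-derivative off $V^{\text{tot}}$ before any cancellation is claimed.
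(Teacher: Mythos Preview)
Your argument is correct and is essentially the standard derivation that the paper itself only cites rather than spells out: the paper merely points to \cite[Theorem~32]{Gpde} (the transport identity with the material derivative and the term $f\,\Div_\Gamma V^{\text{tot}}$) together with Gau\ss' theorem (Proposition~\ref{partialintegration}), and then to \cite[Proposition~2.58]{Buerger} for the passage from embedded to immersed surfaces. Your computation of $\partial_t\sqrt{\det g_t}=\sqrt{\det g_t}\,\Div_\Gamma V^{\text{tot}}$, the splitting $\Div_\Gamma V^{\text{tot}}=\Div_\Gamma V^{\text{tan}}-HV$, and the integration by parts via Proposition~\ref{partialintegration} reproduce exactly that route; by working directly on the reference manifold $M$ with a partition of unity you also handle the immersed case without any extra argument, which is arguably cleaner than the paper's ``transfer from embedded'' viewpoint.

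You are right to flag the regularity issue: for a $C^1$-$C^2$-evolving hypersurface $\partial_t\theta$ is only $C^0$ in space, so the pointwise expression $\Div_\Gamma V^{\text{tot}}$ is not literally defined. Your second remedy---moving the spatial derivative off $V^{\text{tot}}$ by an integration by parts in the chart variable \emph{before} interpreting anything as a surface divergence---is exactly the correct fix, since the remaining factors $\psi f$, $(g_t)^{ij}$, $\partial_j\theta_t$ and $\sqrt{\det g_t}$ are all at least $C^1$ in space under the stated hypotheses. The approximation alternative also works but is less direct.
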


This result is well-known for embedded surfaces but as integration is not defined locally, it does not transfer directly to the case of immersed surfaces. The more subtle argumentation for the transfer relies on the first statement in \cite[Theorem 32]{Gpde} and on Gauß' theorem (cf. \cite[Proposition 2.58]{Buerger}).

\section*{Acknowledgements}

The second author is grateful for the funding from the DFG Research Training Group 2339 Interfaces, Complex Structures, and Singular Limits.

\bibliographystyle{amsplain}
\bibliography{literature}

\end{document}